\newcommand{\euscr}[1]{\EuScript{#1}}
\newtheorem{conj}{Conjecture}[section]
\DeclareMathOperator{\modmod}{/\!/}
\newtheorem{letterthm}{Theorem}
\theoremstyle{definition}
\numberwithin{equation}{section}
\newtheorem{thm}[equation]{Theorem}
\newtheorem{lemma}[equation]{Lemma}
\newtheorem{corollary}[equation]{Corollary}
\newtheorem{proposition}[equation]{Proposition}
\newtheorem{remark}[equation]{Remark}
\newtheorem{definition}[equation]{Definition}
\newtheorem{example}[equation]{Example}
\newtheorem{question}{Question}
\begin{document}

\title{Rings of cooperations for hermitian K-theory over finite fields}
\author{Jackson Morris}
\address{Department of Mathematics, University of Washington, Seattle, Washington}
\email{\href{mailto:jackmann@uw.edu}{jacksonmorris1999@gmail.com}}

%
\subjclass[2020]{14F42, 55Q10, 55T15, 19G38}

\begin{abstract}
	We compute the ring of cooperations  $\pi_{*,*}^{\mathbb{F}_q}(\text{kq} \otimes \text{kq})$ for the very effective hermitian K-theory over all finite fields $\mathbb{F}_q$ where $\text{char}(\mathbb{F}_q) \neq 2.$ To do this, we use the motivic Adams spectral sequence and show that all differentials are determined by the integral motivic cohomology of $\mathbb{F}_q$. As an application, we compute the $\mathrm{E}_1$-page of the kq-resolution.
\end{abstract}

\maketitle

\begin{center}
    \includegraphics[scale=.25]{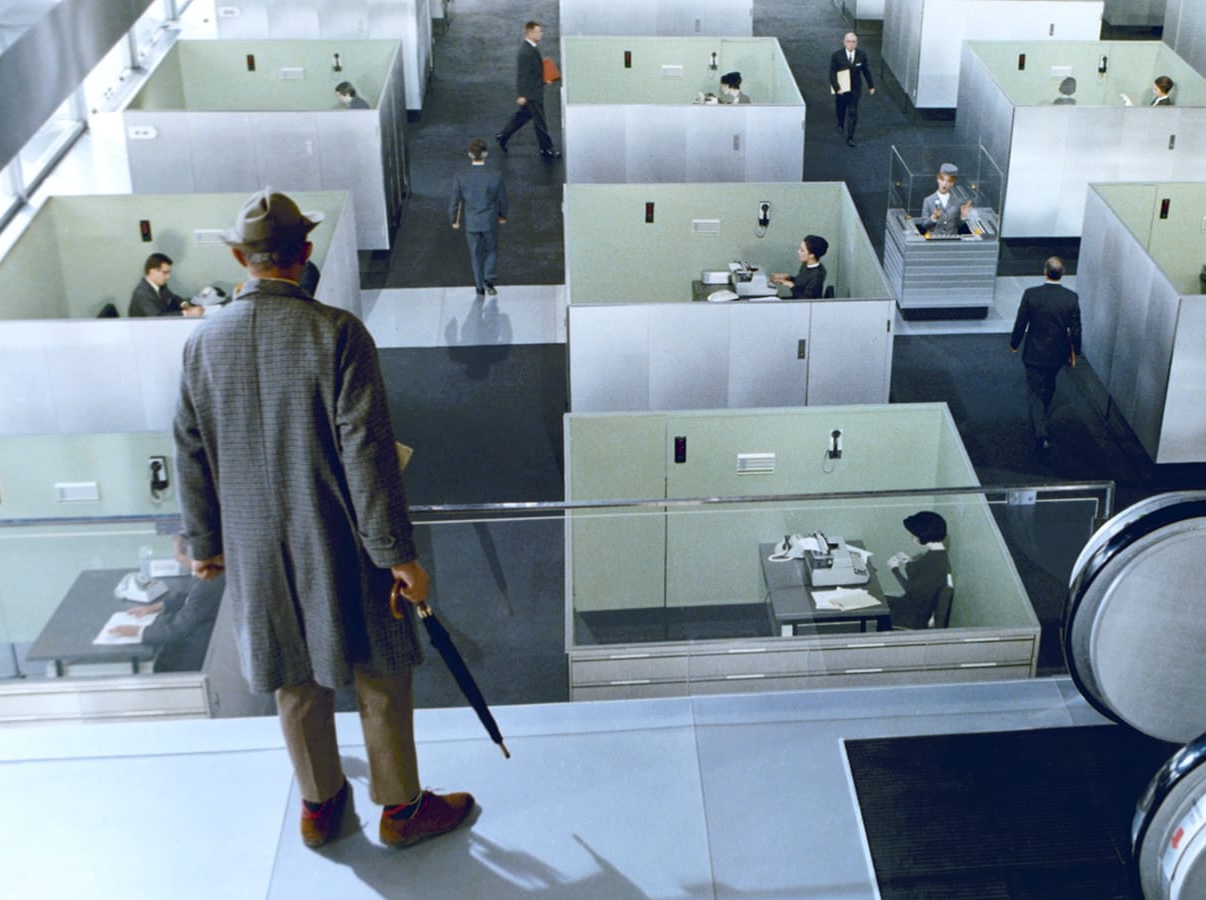} \\
    
    Offices from Jacques Tati's \emph{Playtime} (1967).
\end{center}
\vspace*{\fill}

\newpage
\tableofcontents

\section{Introduction}
The stable motivic homotopy groups of spheres are among the most important invariants in stable motivic homotopy theory. For $F$ a field, the bigraded ring $\pi_{*,*}^F(\mathbb{S})$ has many connections with both topological and arithmetic information regarding $F$. For example, it is a theorem of Morel \cite{MorelKMW} that there is an isomorphism
\[\pi_{-n,-n}^F(\mathbb{S}) \cong \text{K}_{n}^{\text{MW}}(F),\]
where $\text{K}^{\text{MW}}_n(F)$ is the $n^{th}$ Milnor-Witt K-theory of $F$.  As another example, take $F$ to be algebraically closed, and let $e = \text{char}(F)$ for $F$ of positive characteristic and $e=1$ for $F$ of characteristic 0. Then there is an isomorphism after inverting the characteristic \cite{LevineComparison, WO-finite}
\[(\pi_{n,0}^F(\mathbb{S}))[e^{-1}] \cong (\pi_n(\mathbb{S}^{\mathrm{top}}))[e^{-1}],\]
where the right hand side denotes the classical stable homotopy groups of spheres.

An extremely useful tool for computing stable motivic homotopy groups is the motivic Adams spectral sequence. For any motivic ring spectrum E, there is an E-based motivic Adams spectral sequence computing $\pi_{*,*}^F(\mathbb{S})$ up to some completion. Perhaps the most well studied is the $\text{H}\mathbb{F}_p$-based motivic Adams spectral sequence, where $\text{H}\mathbb{F}_p$ is the motivic ring spectrum representing motivic cohomology with mod-$p$ coefficients. We will refer to this spectral sequence simply as the $\textbf{mASS}_p^F(\mathbb{S})$, which has signature
\[\mathrm{E}^{s,f,w}_2=\text{Ext}^{s,f,w}_{\euscr{A}^\vee_p}(\mathbb{M}_p^F, \mathbb{M}_p^F) \implies \pi_{s, w}^F(\mathbb{S}_{p,\eta}^\wedge).\]
This spectral sequence has been well studied over algebraically closed fields \cite{DImASS, HKO-convergencemASS}, the real numbers \cite{BelIsa-stem}, and, following the computation of the dual Steenrod algebra $\euscr{A}^\vee_p$ in positive characteristic \cite{HKO17}, over finite fields \cite{WO-finite}.

Another perspective which one can take to gain insight into $\pi_{*,*}^F(\mathbb{S})$ is by organizing its elements into periodic families. This is the heart of the chromatic approach to motivic homotopy theory. Instead of using the $\textbf{mASS}^F_p(\mathbb{S})$ to compute the motivic stable stems one stem degree at a time, one can use different E-based motivic Adams spectral sequences to localize attention to particular infinite periodic families in $\pi^F_{*,*}(\mathbb{S})$. One downside to this philosophy is that often these spectral sequences are less computable at the onset, and some amount of genuine work must be put into computing the $\mathrm{E}_1$-page. These spectral sequences take the form
\[\mathrm{E}_1 = \pi_{s+f, w}^F(\text{E} \otimes \overline{\text{E}}^{\otimes f}) \implies \pi^F_{s,w}(\mathbb{S}_\mathrm{E}^\wedge),\]
where $\overline{\text{E}}$ is the cofiber of the unit map $\mathbb{S} \to \text{E}$ and $\mathbb{S}_\mathrm{E}^\wedge$ denotes the $\mathrm{E}$-nilpotent completion of $\mathbb{S}$. In practice, it is often easier to compute $\pi_{*,*}^F(\text{E} \otimes \text{E})$, known as the ring of cooperations, and then bootstrap up to the $\mathrm{E}_1$-page.

While this is common in classical stable homotopy theory (see \cite{Mah81, GonzalesBP1ASS, BBBCX}), in motivic homotopy theory this idea is rather new. For instance, the study of $v_1$-periodicity via these techniques is in its nascent stage (although there is work on the subject via the slice spectral sequence due to \cite{belmontisaksenkong-v1R, kongquigley}). Two motivic spectra which are good candidates for accessing $v_1$-periodicity, in that they contain some power of $v_1$ in their homotopy, are the very effective hermitian K-theory spectrum kq \cite{ARO20} and the truncated motivic Brown--Peterson spectrum $\text{BPGL} \langle 1 \rangle$ \cite{Hu-Kriz-remarks}. In \cite{CQ21}, Culver and Quigley study the kq-based Adams spectral sequence, known as the kq-resolution, over $F=\mathbb{C}$ at the prime 2. As an application, they determine the $v_1$-periodic elements in $\pi_{*,*}^\mathbb{C}(\mathbb{S})$. In \cite{Realkqcoop}, we extend the study of the kq-resolution to $F = \mathbb{R}$ by computing the ring of cooperations $\pi_{*,*}^{\mathbb{R}}(\text{kq} \otimes \text{kq})$ up to $v_1$-torsion. In \cite{MorPetTat-BPGL1}, joint work with of the author, Petersen, and Tatum studies the $\text{BPGL} \langle 1 \rangle$-based motivic Adams spectral sequence. We compute the ring of cooperations and produce spectrum-level splittings over the fields $F=\mathbb{C}, \mathbb{R}, \mathbb{F}_q$ and at all primes $p$ where $\text{char}(\mathbb{F}_q) \neq p$.

\subsection{Main Results}
The goal of this paper is to extend the study of the kq-resolution to base fields of positive characteristic. Let $\mathbb{F}_q$ be a finite field where $\text{char}(\mathbb{F}_q) \neq 2$. Our first result is a computation of the ring of cooperations $\pi_{*,*}^{\mathbb{F}_q}(\text{kq} \otimes \text{kq})$ up to $v_1$-torsion.

\begin{letterthm}[\Cref{thm:easyExtB0k}, \Cref{thm:Ext-A1-B0k-F3}]
    The $\textup{\textbf{mASS}}^{\mathbb{F}_q}(\textup{kq} \otimes \textup{kq})$ has signature
    \[\textup{E}_2^{s,f,w} = \bigoplus_{k \geq 0}\Sigma^{4k, 2k}\textup{Ext}_{\euscr{A}(1)^\vee}^{s,f,w}(\mathbb{M}_2^{\mathbb{F}_q}, B_0^{\mathbb{F}_q}(k)) \implies \pi_{s,w}^{\mathbb{F}_q}(\textup{kq} \otimes \textup{kq}),\]
    where $B_0^{\mathbb{F}_q}(k)$ denotes the $k^{th}$ integral motivic Brown--Gitler comodule. We describe the $\textup{E}_\infty$-page, modulo $v_1$-torsion, as a module over $\pi_{*,*}^{\mathbb{F}_q}\textup{(kq)}.$
\end{letterthm}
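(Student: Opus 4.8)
The plan is to run the $\euscr{A}^\vee$-based motivic Adams spectral sequence for $\textup{kq} \otimes \textup{kq}$ and reduce its input to small Ext groups by a change of rings. The essential structural input is the identification of the mod-$2$ motivic homology
\[
H_{**}(\textup{kq}) \cong \euscr{A}^\vee \square_{\euscr{A}(1)^\vee} \mathbb{M}_2^{\mathbb{F}_q},
\]
the motivic, finite-field analogue of $H_*(\textup{ko}) \cong \mathcal{A}/\!/\mathcal{A}(1)$, which follows from the structure of $\textup{kq}$ recorded in \cite{ARO20}. Since the Künneth isomorphism identifies $H_{**}(\textup{kq} \otimes \textup{kq})$ with $\euscr{A}^\vee \square_{\euscr{A}(1)^\vee} H_{**}(\textup{kq})$ as $\euscr{A}^\vee$-comodules, the comodule change-of-rings isomorphism collapses the $E_2$-page to
\[
\textup{Ext}_{\euscr{A}^\vee}(\mathbb{M}_2^{\mathbb{F}_q}, H_{**}(\textup{kq} \otimes \textup{kq})) \cong \textup{Ext}_{\euscr{A}(1)^\vee}(\mathbb{M}_2^{\mathbb{F}_q}, H_{**}(\textup{kq})),
\]
where $H_{**}(\textup{kq})$ is now regarded as an $\euscr{A}(1)^\vee$-comodule. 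Convergence of the resulting spectral sequence is guaranteed by the connectivity of $\textup{kq}$ together with the standard motivic convergence criterion \cite{HKO-convergencemASS}.

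The first main task is to establish the motivic Brown--Gitler splitting
\[
H_{**}(\textup{kq}) \cong \bigoplus_{k \geq 0} \Sigma^{4k,2k} B_0^{\mathbb{F}_q}(k)
\]
of $\euscr{A}(1)^\vee$-comodules, which directly yields the claimed $E_2$-page. Over $\mathbb{F}_q$ the motivic cohomology of the point is $\mathbb{M}_2^{\mathbb{F}_q} \cong \mathbb{M}_2^{\overline{\mathbb{F}_q}} \otimes \Lambda(u)$ with $u$ an exterior class in bidegree $(1,1)$, so both $\euscr{A}(1)^\vee$ and the Brown--Gitler comodules differ from their counterparts over $\overline{\mathbb{F}_q}$ (which behaves like the complex numbers by \cite{LevineComparison}) only by this extra generator. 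I would therefore obtain the splitting by comparison with the splitting over $\overline{\mathbb{F}_q}$ established in \cite{CQ21}, checking that the weight-$(1,1)$ class $u$ is carried compatibly under base change. With the splitting in hand, computing the individual summands $\textup{Ext}_{\euscr{A}(1)^\vee}(\mathbb{M}_2^{\mathbb{F}_q}, B_0^{\mathbb{F}_q}(k))$ becomes a purely algebraic problem; I would run a Bockstein-type spectral sequence for the exterior class $u$ whose input is the known Ext over $\overline{\mathbb{F}_q}$, treating the cases $q \equiv 1$ and $q \equiv 3 \pmod 4$ separately since the generator of $H^{1,1}$ is a non-square in the first case but the class $\rho = [-1]$ in the second, and these interact differently with the $\textup{Sq}^i$-coaction.

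The main obstacle is the passage from $E_2$ to $E_\infty$: identifying the Adams differentials and the multiplicative extensions. My plan is to show that every differential is forced by the integral motivic cohomology of $\mathbb{F}_q$, i.e. by the integral Bockstein detecting the class $u$ together with naturality along the unit map $\textup{kq} \to \HZ$ and along the base change $\mathbb{F}_q \to \overline{\mathbb{F}_q}$. Concretely, the spectral sequence over $\overline{\mathbb{F}_q}$ is already understood, so any differential on a class not in the image of base change must originate from the single arithmetic generator; propagating this through the Leibniz rule should pin down the remaining differentials. The delicate point will be the hidden extension problems, which I expect to control using the $\pi_{**}^{\mathbb{F}_q}(\textup{kq})$-module structure: multiplication by the images of $\eta$, $v_1$, and $u$ constrains the associated graded, and working modulo $v_1$-torsion removes the least tractable contributions.

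Finally, assembling the summands and recording the surviving classes as a $\pi_{**}^{\mathbb{F}_q}(\textup{kq})$-module yields the description of $E_\infty$ modulo $v_1$-torsion. I anticipate the answer to decompose into a $v_1$-torsion-free part, built from shifted copies of $\pi_{**}^{\mathbb{F}_q}(\textup{kq})$ indexed by the Brown--Gitler weight $k$, together with $v_1$-torsion classes in low filtration governed by the exterior generator $u$; isolating the torsion-free summand is precisely what the phrase ``modulo $v_1$-torsion'' in the statement is designed to capture.
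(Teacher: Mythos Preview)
Your overall architecture matches the paper's: the K\"unneth isomorphism, the change of rings to $\euscr{A}(1)^\vee$, and the Brown--Gitler splitting of $H_{**}(\textup{kq})$ as an $\euscr{A}(1)^\vee$-comodule (which the paper imports directly from \cite{CQ21} rather than via base change) give exactly the stated $E_2$-page. Where you diverge is in computing the summands $\textup{Ext}_{\euscr{A}(1)^\vee}(B_0^{\mathbb{F}_q}(k))$. You propose a $u$/$\rho$-Bockstein spectral sequence with input the known Ext over the algebraic closure; the paper instead runs the algebraic Atiyah--Hirzebruch spectral sequence coming from the cellular filtration of $B_0^{\mathbb{F}_q}(1)$, and then inducts up to general $k$ via the short exact sequences of Brown--Gitler comodules (\Cref{prop:ses bg}). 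Both routes are viable, but the paper's makes the $\textup{Ext}_{\euscr{A}(1)^\vee}(\mathbb{M}_2^{\mathbb{F}_q})$-module structure transparent, and that module structure is precisely what propagates the Adams differentials. One caution on your route: for $q \equiv 3 \pmod 4$ the dual Steenrod algebra is \emph{not} a base change of the one over $\overline{\mathbb{F}_q}$, since the relation $\overline{\tau}_i^2 = \tau\overline{\xi}_{i+1} + \rho\overline{\tau}_{i+1} + \rho\overline{\tau}_0\overline{\xi}_{i+1}$ genuinely entangles $\rho$ with the algebra structure; your claim that things ``differ only by this extra generator'' is an oversimplification, and the Bockstein would have to detect phenomena the paper sees as a nontrivial $d_3$ in the aAHSS, namely the Massey product $\langle \rho, h_0, h_1\rangle = \tau h_1$. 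For the Adams differentials themselves, the paper does essentially what you sketch via the map $\textup{kq} \to \textup{H}\mathbb{Z}$: it checks that every generator except powers of $\tau$ is a permanent cycle for degree/weight reasons, reads off the differentials on $\tau^i$ from $\textbf{mASS}^{\mathbb{F}_q}(\textup{H}\mathbb{Z})$ (forced by Soul\'e's computation of integral motivic cohomology), lifts them along $\textup{kq} \to \textup{H}\mathbb{Z}$, and then pushes them across the $E_2$-page by the module structure. Base change to $\overline{\mathbb{F}_q}$ plays no role in the paper's differential argument.

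Your anticipated shape of the answer needs correction. The $v_1$-torsion-free part is \emph{not} built from shifted copies of $\pi_{**}^{\mathbb{F}_q}(\textup{kq})$; rather, each summand is an $h_1$-truncated Adams cover $\textup{kq}^{\langle m \rangle}$ or $\textup{ksp}^{\langle m \rangle}$, plus copies of $\textup{Ext}_{\euscr{A}(0)^\vee}(\mathbb{M}_2^{\mathbb{F}_q})$ and stray $\mathbb{F}_2[\tau]$- or $\mathbb{F}_2[\tau,h_1]/(h_1^2)$-pieces. The non-nilpotence of $\eta$ in motivic homotopy means that naive shifts of $\textup{kq}$ would contribute unwanted infinite $h_1$-towers, and the paper introduces a modified truncation $\tau_{\geq n}^{h_1}$ specifically to excise these; this is the point at which the motivic story diverges from Mahowald's classical bo-resolution, where Adams covers of $\textup{bo}$ and $\textup{bsp}$ suffice.
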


Contrary to the classical, $\mathbb{C}$-motivic, and the $\mathbb{R}$-motivic analogues of this question, this spectral sequence does not collapse on the $\mathrm{E}_2$-page. Since $B_0^{\mathbb{F}_q}(0) \cong \mathbb{M}_2^{\mathbb{F}_q}$, our description of the $\mathrm{E}_2$-page identifies the $k=0$-summand as the $\mathrm{E}_2$-page of the $\textbf{mASS}^{\mathbb{F}_q}(\text{kq})$, which also does not collapse at the $\mathrm{E}_2$-page. We make the following observation, inspired by findings of Ormsby \cite{Ormsby11}, Ormsby and \O stv\ae r \cite{Ormsby-Ostvaer-motivicbp}, as well as of the author, Petersen, and Tatum \cite{MorPetTat-BPGL1}.

\begin{letterthm}[\Cref{thm:kqsmashkqdifsF5}, \Cref{thm:kqsmashkqdifsF3}]
    The differentials in the $\textup{\textbf{mASS}}^{\mathbb{F}_q}(\textup{kq} \otimes \textup{kq})$ are determined by the $\textup{Ext}_{\euscr{A}(1)^\vee}^{*,*,*}(\mathbb{M}_2^{\mathbb{F}_q}, \mathbb{M}_2^{\mathbb{F}_q})$-module structure of the $\textup{E}_2$-page and the differentials of the $\textup{\textbf{mASS}}^{\mathbb{F}_q}(\textup{kq})$.
\end{letterthm}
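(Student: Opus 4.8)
The plan is to deduce the differentials formally from two structural inputs: the action of the $\textbf{mASS}^{\mathbb{F}_q}(\textup{kq})$ on the whole spectral sequence, and the previously determined differentials of that spectral sequence. First I would record the module structure. The left unit $\textup{kq} = \textup{kq}\otimes\mathbb{S} \to \textup{kq}\otimes\textup{kq}$ is a map of $\textup{kq}$-algebras, so $\textup{kq}\otimes\textup{kq}$ is a $\textup{kq}$-module and the $\textbf{mASS}^{\mathbb{F}_q}(\textup{kq}\otimes\textup{kq})$ is a module over the $\textbf{mASS}^{\mathbb{F}_q}(\textup{kq})$. On $E_2$ this is exactly the action of the $k=0$ summand $R := \textup{Ext}^{***}_{\euscr{A}(1)^\vee}(\mathbb{M}_2^{\mathbb{F}_q},\mathbb{M}_2^{\mathbb{F}_q})$ on the summands $\Sigma^{4k,2k}\textup{Ext}^{***}_{\euscr{A}(1)^\vee}(\mathbb{M}_2^{\mathbb{F}_q},B_0^{\mathbb{F}_q}(k))$ from Theorem~A, and this action is summand-preserving because the direct sum decomposition of $E_2$ is one of $R$-modules. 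Consequently every differential obeys the Leibniz rule $d_r(a\cdot x) = d_r(a)\cdot x + a\cdot d_r(x)$ with $a$ in the $\textup{kq}$-spectral sequence and $x$ in the module.

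Next I would identify, for each $k$, a set of $R$-module generators of $\Sigma^{4k,2k}\textup{Ext}_{\euscr{A}(1)^\vee}(\mathbb{M}_2^{\mathbb{F}_q},B_0^{\mathbb{F}_q}(k))$ from the explicit description in Theorem~A, and argue that these generators are permanent cycles: they are the bottom classes of the integral Brown--Gitler summands, they lift to and detect the generators $\iota_k\in\pi_{4k,2k}^{\mathbb{F}_q}(\textup{kq}\otimes\textup{kq})$ in lowest filtration, and are therefore permanent cycles. Granting this, the theorem is immediate: a typical class is $a\cdot g$ with $g$ a generator and $a\in R$, so $d_r(a\cdot g) = d_r(a)\cdot g$ is manifestly computed from the $\textup{kq}$-differential $d_r(a)$ and the module structure, and lands in the same summand. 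No differential can therefore cross between distinct $k$'s, and no datum beyond the two named inputs enters.

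The hard part will be to rule out exotic differentials---in particular, to confirm that the module generators really are permanent and that each summand is generated over $R$ by them, so that the $R$-linear extension of the $\textup{kq}$-differentials accounts for everything. Here I would compare with the algebraically closed field $\overline{\mathbb{F}_q}$, over which the analogous spectral sequence collapses, as in the $\mathbb{C}$-motivic case of Culver--Quigley. Base change along $\mathbb{F}_q\hookrightarrow\overline{\mathbb{F}_q}$ kills the arithmetic class $u$ of bidegree $(1,1)$ in $\mathbb{M}_2^{\mathbb{F}_q}=\mathbb{F}_2[\tau]\otimes\Lambda(u)$ and gives a map of spectral sequences whose target has no differentials; hence the target of every $\mathbb{F}_q$-differential lies in the kernel of base change, i.e. in the ideal generated by $u$. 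Thus every new differential is a $u$-Bockstein with $u$-divisible target, exactly as for the $\textbf{mASS}^{\mathbb{F}_q}(\textup{kq})$---which is precisely why that spectral sequence fails to collapse---and the short exact sequence $0\to\Sigma^{1,1}\mathbb{M}_2^{\overline{\mathbb{F}_q}}\xrightarrow{\cdot u}\mathbb{M}_2^{\mathbb{F}_q}\to\mathbb{M}_2^{\overline{\mathbb{F}_q}}\to 0$ controls this $u$-divisibility uniformly. I expect the genuine work to be the bookkeeping that makes this uniform over all $k\ge 0$ and over the two arithmetic cases $q\equiv 1$ and $q\equiv 3\pmod 4$ of \Cref{thm:kqsmashkqdifsF5} and \Cref{thm:kqsmashkqdifsF3}; it rests on the explicit $\textup{Ext}$-module structure of the integral Brown--Gitler comodules, which pins down the generators and leaves room for exactly the $u$-divisible targets predicted by the $\textup{kq}$-differentials.
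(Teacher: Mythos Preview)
Your overall framework matches the paper's: exploit the $R:=\text{Ext}_{\euscr{A}(1)^\vee}^{***}(\mathbb{M}_2^{\mathbb{F}_q})$-module structure together with the Leibniz rule, show that an $R$-generating set for each Brown--Gitler summand consists of permanent cycles, and then propagate the $\textbf{mASS}^{\mathbb{F}_q}(\text{kq})$-differentials. The divergence, and the gap, is in how you justify that the generators are permanent.

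First, $\Sigma^{4k,2k}\text{Ext}_{\euscr{A}(1)^\vee}^{***}(B_0^{\mathbb{F}_q}(k))$ is not cyclic over $R$. By \Cref{cor:f5ExtB0kDescriptive} and \Cref{thm:Ext-A1-B0k-F3} it decomposes (modulo $v_1$-torsion) into an $h_1$-truncated Adams cover of $\text{kq}$ or $\text{ksp}$, several $\text{Ext}_{\euscr{A}(0)^\vee}^{***}(\mathbb{M}_2^{\mathbb{F}_q})$-summands, and possibly an extra $\mathbb{F}_2[\tau]$ or $\mathbb{F}_2[\tau,h_1]/(h_1^2)$. Each piece contributes its own $R$-generator. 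Your permanence argument only treats the single class $\iota_k\in\pi_{4k,2k}$ in lowest filtration; even granting that one, the remaining generators are unaccounted for, and ``lowest filtration'' alone does not make a class a permanent cycle.

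Second, your base-change comparison to $\overline{\mathbb{F}_q}$ shows only that every target is $u$-divisible (and you treat only the $q\equiv 1\,(4)$ case explicitly). This constrains the differentials but does not determine them, and in particular does not force $d_r(g)=0$ for a given generator $g$: there can be $u$-divisible classes in the correct tridegree. The paper instead argues directly with motivic weight. For each $R$-generator $x$ of one of the pieces above, one checks that every class in stem $s-1$ (coming from this or any other $k$-summand) has strictly smaller weight than $x$, so $d_r(x)=0$ for all $r$; the exceptional $\mathbb{F}_2[\tau]$ and $\mathbb{F}_2[\tau,h_1]/(h_1^2)$ summands are handled by the same reasoning that shows $h_1$ is permanent in $\textbf{mASS}^{\mathbb{F}_q}(\text{kq})$. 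Once all generators are permanent, Leibniz finishes exactly as you say. Your base-change idea is a reasonable sanity check, but it does not replace this weight argument.
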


The differentials in the $\textbf{mASS}^{\mathbb{F}_q}(\text{kq})$ are themselves lifted from the $\textbf{mASS}^{\mathbb{F}_q}(\text{H}\mathbb{Z})$ along the natural quotient map $\text{kq} \to \text{H}\mathbb{Z}$. The above theorem implies that relative to the algebra $\text{Ext}^{*,*,*}_{\euscr{A}(1)^\vee}(\mathbb{M}_2^{\mathbb{F}_q}, \mathbb{M}_2^{\mathbb{F}_q})$, the only interesting part of the ring of cooperations for hermitian K-theory is determined by the integral motivic cohomology of $\mathbb{F}_q$. We are also able to use this observation to compute $\pi_{*,*}^{\mathbb{F}_q}(\text{ksp})$ (see \Cref{lem:kspmASSDifsEasy,lemma:kspmASSF3}) reproving a result of Friedlander \cite{Friedlander76}.

As an application, we describe the $n$-line of the $\textup{E}_1$-page of the kq-resolution.

\begin{letterthm}[\Cref{prop:n-lineE2}, \Cref{thm:n-lineDifs}]
\label{thm:C}
    The $\textup{\textbf{mASS}}^{\mathbb{F}_q}(\textup{kq} \otimes \overline{\textup{kq}}^{\otimes n})$ has signature
    \[\textup{E}_2^{s,f,w} = \bigoplus_{K \in \euscr{K}_n} \Sigma^{4|K|, 2|K|}\textup{Ext}_{\euscr{A}(1)^\vee}^{s,f,w}(\mathbb{M}_2^{\mathbb{F}_q}, B_0^{\mathbb{F}_q}(K)) \implies \pi_{s, w}^{\mathbb{F}_q}(\textup{kq} \otimes \overline{\textup{kq}}^{\otimes n}),\]
    where $\euscr{K}_n = \{K = (k_1, \dots, k_n): k_j \geq 1 \textup{ for all } j\}$, $|K| = \sum_{j=1}^nk_j$, and $B_0^{\mathbb{F}_q}(K) = \bigotimes_{j=1}^nB_0^{\mathbb{F}_q}(k_j).$ For $n=0$, the differentials are given by the differentials in the $\textup{\textbf{mASS}}^{\mathbb{F}_q}(\textup{kq})$. For $n >0$, the differentials are determined by the underlying module structure over the $0$-line.
\end{letterthm}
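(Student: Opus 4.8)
My overall strategy is to obtain the $E_2$-page (\Cref{prop:n-lineE2}) from the Brown--Gitler splitting of the $\mathbb{F}_2$-motivic homology of $\text{kq}$ together with an Adams change-of-rings isomorphism, and then to extract the differentials (\Cref{thm:n-lineDifs}) from the $\text{kq}$-module structure of the spectral sequence. First I would compute $H_{**}(\text{kq} \otimes \overline{\text{kq}}^{\otimes n};\mathbb{F}_2)$ as an $\euscr{A}^\vee$-comodule. The unit cofiber sequence $\mathbb{S} \to \text{kq} \to \overline{\text{kq}}$ splits on $\mathbb{F}_2$-homology, so $H_{**}(\overline{\text{kq}};\mathbb{F}_2)$ is the cokernel of the unit; under the Brown--Gitler splitting $H_{**}(\text{kq};\mathbb{F}_2)|_{\euscr{A}(1)^\vee} \cong \bigoplus_{k \geq 0}\Sigma^{4k,2k}B_0^{\mathbb{F}_q}(k)$ this amounts to discarding the summand $B_0^{\mathbb{F}_q}(0) \cong \mathbb{M}_2^{\mathbb{F}_q}$. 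A K\"unneth computation over $\mathbb{M}_2^{\mathbb{F}_q}$, combined with the shearing isomorphism $(\euscr{A}^\vee \square_{\euscr{A}(1)^\vee}\mathbb{M}_2^{\mathbb{F}_q}) \otimes N \cong \euscr{A}^\vee \square_{\euscr{A}(1)^\vee}(N|_{\euscr{A}(1)^\vee})$ that absorbs the leftmost $\text{kq}$ factor into the cotensor, yields
\[ H_{**}(\text{kq} \otimes \overline{\text{kq}}^{\otimes n};\mathbb{F}_2) \cong \euscr{A}^\vee \,\square_{\euscr{A}(1)^\vee}\!\left(\bigoplus_{K \in \euscr{K}_n}\Sigma^{4|K|,2|K|}B_0^{\mathbb{F}_q}(K)\right). \]
Adams change of rings then gives the stated $E_2$-page $\bigoplus_{K}\Sigma^{4|K|,2|K|}\text{Ext}_{\euscr{A}(1)^\vee}(\mathbb{M}_2^{\mathbb{F}_q},B_0^{\mathbb{F}_q}(K))$.

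For the differentials I would use that $\text{kq} \otimes \overline{\text{kq}}^{\otimes n}$ is a $\text{kq}$-module via its leftmost factor, so the spectral sequence is a module over $\textbf{mASS}^{\mathbb{F}_q}(\text{kq})$; concretely the pages are modules over the $0$-line $\text{Ext}_{\euscr{A}(1)^\vee}(\mathbb{M}_2^{\mathbb{F}_q},\mathbb{M}_2^{\mathbb{F}_q})$ and the differentials obey the Leibniz rule over this action. For $n=0$ the differentials are by definition those of $\textbf{mASS}^{\mathbb{F}_q}(\text{kq})$, themselves lifted from $\textbf{mASS}^{\mathbb{F}_q}(\HZ)$. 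For $n>0$ I would describe the $\text{Ext}_{\euscr{A}(1)^\vee}(\mathbb{M}_2^{\mathbb{F}_q},\mathbb{M}_2^{\mathbb{F}_q})$-module structure of each $\text{Ext}_{\euscr{A}(1)^\vee}(\mathbb{M}_2^{\mathbb{F}_q},B_0^{\mathbb{F}_q}(K))$, choose module generators, and show each generator is either a permanent cycle or supports a differential forced by $0$-line-linearity and the already-known $\text{kq}\otimes\text{kq}$ differentials (\Cref{thm:kqsmashkqdifsF5}, \Cref{thm:kqsmashkqdifsF3}). Smashing $\mathbb{S} \to \text{kq} \to \overline{\text{kq}}$ with $\text{kq}\otimes\overline{\text{kq}}^{\otimes(n-1)}$ gives a cofiber sequence whose long exact sequence lets me induct on $n$, reducing each new generator to the $(n-1)$-case and to the $\text{kq}\otimes\text{kq}$ pattern.

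The crux, and the main obstacle, is the final step for $n>0$: showing the $0$-line module structure admits no exotic differentials, i.e.\ that every class is $\text{Ext}_{\euscr{A}(1)^\vee}(\mathbb{M}_2^{\mathbb{F}_q},\mathbb{M}_2^{\mathbb{F}_q})$-generated by classes whose fate is already determined. This demands a sufficiently explicit module description of $\text{Ext}_{\euscr{A}(1)^\vee}(\mathbb{M}_2^{\mathbb{F}_q},B_0^{\mathbb{F}_q}(K))$ over the $0$-line --- enough freeness or a tractable generators-and-relations presentation --- so that the Leibniz rule, driven by the extra arithmetic class in $\mathbb{M}_2^{\mathbb{F}_q}$ specific to finite fields (the same class responsible for the non-collapse of $\textbf{mASS}^{\mathbb{F}_q}(\HZ)$), propagates uniquely. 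I expect the remaining candidate exotic differentials to be eliminated by sparsity in the tri-graded $(s,f,w)$ pages.
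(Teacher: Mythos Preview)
Your proposal is correct and follows essentially the same route as the paper. For the $E_2$-page you use K\"unneth plus the Brown--Gitler splitting of $\text{H}_{**}(\overline{\text{kq}})$ followed by change of rings, which is exactly what the paper does in \Cref{lemma:KunnethForkqRes} and \Cref{prop:n-lineE2}; your shearing isomorphism is just the change-of-rings mechanism made explicit. For the differentials you invoke the $\text{Ext}_{\euscr{A}(1)^\vee}(\mathbb{M}_2^{\mathbb{F}_q})$-module structure and the Leibniz rule to propagate the known $\textbf{mASS}^{\mathbb{F}_q}(\text{kq})$ differentials, which is precisely the argument of \Cref{thm:n-lineDifs}.

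One small organizational difference: rather than inducting on $n$ via the cofiber sequence $\mathbb{S}\to\text{kq}\to\overline{\text{kq}}$, the paper handles all $n$ at once by asserting (in the paragraph preceding \Cref{thm:n-lineDifs}) that $\text{Ext}_{\euscr{A}(1)^\vee}(B_0^{\mathbb{F}_q}(K))$ decomposes, modulo $v_1$-torsion, into three explicit types of summands --- $h_1$-truncated Adams covers of kq and ksp, copies of $\text{Ext}_{\euscr{A}(0)^\vee}(\mathbb{M}_2^{\mathbb{F}_q})$, and stunted $\eta$-towers --- on each of which the differentials are pinned down exactly as in \Cref{thm:kqsmashkqdifsF5} and \Cref{thm:kqsmashkqdifsF3}. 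This is the ``sufficiently explicit module description'' you flag as the crux; the paper obtains it by iterating the short exact sequences of \Cref{prop:ses bg} and the $\textbf{aAHSS}$ rather than by an induction on $n$. Your inductive packaging would work too, but the middle term of your cofiber sequence is $\text{kq}^{\otimes 2}\otimes\overline{\text{kq}}^{\otimes(n-1)}$, so you would still need the direct decomposition to finish; the paper's route is slightly more economical.
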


We view the kq-resolution as the key tool to understanding $v_1$-periodicity in $\pi_{*,*}^{\mathbb{F}_q}(\mathbb{S})$, and we will use \Cref{thm:C} in future work to analyze the kq-resolution over $\mathbb{R}$ and $\mathbb{F}_q$.

\subsection{Organization}
In \Cref{section:background}, we introduce relevant background. We recall the stable motivic homotopy category $\text{SH}(\mathbb{F}_q)$, the motivic Adams spectral sequence, the dual Steenrod algebra and Brown--Gitler comodules, and the algebraic Atiyah--Hirzebruch spectral sequence. In \Cref{section:trivial}, we compute the ring of cooperations $\pi_{*,*}^{\mathbb{F}_q}(\text{kq} \otimes \text{kq})$ in the case where $q \equiv 1 \,(4)$. In \Cref{section:nontrivial}, we compute the ring of cooperations $\pi_{*,*}^{\mathbb{F}_q}(\text{kq} \otimes \text{kq})$ in the case where $q \equiv 3 \, (4)$. Many of the arguments in \Cref{section:trivial} carry over directly to \Cref{section:nontrivial}, but the particular details of the algebra involved are quite different. As such, we separate these cases into two sections. In \Cref{section:kqres}, we apply our findings to deduce results about the kq-resolution.

\subsection{Notation and Conventions}
\begin{itemize}
    \item We let $\mathbb{F}_q$ denote the field with $q$ elements where $\text{char}(\mathbb{F}_q) \neq 2$.
    \item We work in the stable, presentably symmetric monoidal $\infty$-category $\text{SH}(\mathbb{F}_q)$. We let 
    \[
    \mathbb{S} := \Sigma^{\infty}_{\mathbb{P}^1}\text{Spec}(\mathbb{F}_q)_+
    \]
    denote the monoidal unit and use $\otimes :=\otimes_{\mathbb{S}}$ for the monoidal product.
    \item We let $\text{H}_{*,*}(-)$ denote mod-2 motivic homology and let $\mathbb{M}_2^{\mathbb{F}_q}$ denote the mod-2 motivic homology of a point.    
    \item For $B$ any Hopf algebra over $\mathbb{M}_2^{\mathbb{F}_q}$ and $M$ any $B$-comodule, we use the common abbreviation
    \[\text{Ext}^{s,f,w}_B(M) := \text{Ext}^{s,f,w}_B(\mathbb{M}_2^{\mathbb{F}_q}, M)\]
    to denote the cohomology of $B$ with coefficients in $M$. Note that this Ext is taken in the category $\text{Comod}(B)$. Typically, $B$ will be a Hopf algebra related to the dual Steenrod algebra $\euscr{A}^\vee$.
    \item Our grading convention for Ext is $(s,f,w)$, where $s$ is the stem (or total) degree, $f$ is the Adams filtration, and $w$ is the motivic weight. This implies that Adams differentials take the form
    \[d_r:\mathrm{E}_r^{s,f,w} \to \mathrm{E}_r^{s-1, f+r, w}.\]
    We also refer the the coweight, which is $cw=s-w$. Note that Adams differentials decrease coweight by 1. We display all charts in $(s,f)$-grading with weight suppressed.
    \item Throughout, all spectra are implicitly 2-complete.
\end{itemize}

\subsection{Acknowledgments}
The work presented here constitutes a part of the author's thesis. The author thanks their advisors, Kyle Ormsby and John Palmieri, for their patience and guidance throughout their PhD. The author thanks J.D. Quigley for many conversation regarding kq-resolutions, and thanks Guchuan Li, Sarah Petersen, and Liz Tatum for valuable discussions influencing this project. The author also thanks Gijs Heuts for an inspirational talk at European Talbot 2025 that inspired the author to include \Cref{question} and \Cref{conjecture}. Finally, the author extends his gratitude to Madeline Borowski, Jay Reiter, and Alex Waugh for their willingness to listen to the author ramble on this topic, and thanks Cameron Winter for inspiration \cite{HeavyMetal}.

\section{Background}
\label{section:background}
Throughout, let $\mathbb{F}_q$ be a finite field with $\text{char}(\mathbb{F}_q) \neq 2.$ In this section, we review motivic cohomology and hermitian K-theory, the dual Steenrod algebra, and Brown--Gitler comodules. Then, we describe the $\text{H}\mathbb{F}_2$- and kq-based motivic Adams spectral sequences and outline our program to compute the ring of cooperations.

\subsection{Motivic cohomology and hermitian K-theory}
Spitzweck constructed the \textit{integral motivic cohomology spectrum} $\text{H}\mathbb{Z} \in \text{SH}(\mathbb{Z})$ representing motivic cohomology with integral coefficients \cite{Spitzweck-HZ-overdedekind}. The unique map $f:\mathbb{Z} \to \mathbb{F}_q$ induces a pullback $f^*:\text{SH}(\mathbb{F}_q) \to \text{SH}(\mathbb{Z})$; we abusively denote $\text{H}\mathbb{Z} := f^*(\text{H}\mathbb{Z}) \in \text{SH}(\mathbb{F}_q)$ the integral motivic cohomology spectrum over $\mathbb{F}_q$. This represents motivic cohomology in the sense that if $X \in \text{Sm}_{\mathbb{F}_q}$, then 
\[[X, \Sigma^{s,w}\text{H}\mathbb{Z}] = \text{H}^{s,w}(X; \mathbb{Z}),\]
where we let $X$ also denote the motivic suspension spectrum of $X$. Of particular interest to us is when $X = \text{Spec}(\mathbb{F}_q)$. As we will be computing with the 2-primary motivic Adams spectral sequence, we are only concerned with the 2-completion $\text{H}^{p,q}(\mathbb{F}_q;\mathbb{Z})_2^\wedge$. These groups were calculated by Soul\'e \cite{Soule79}:
\begin{equation}
\label{eq:MotivicCohomology}
\text{H}^{-s,-w}(\mathbb{F}_q;\mathbb{Z})_2^\wedge = \left\{\begin{array}{rl}
    \mathbb{Z}_2 & s=w=0 \\
    \mathbb{Z}/(q^{-w}-1)_2 & s=-1, w \leq 1\\
    0 & \text{else}.
\end{array}\right.
\end{equation}
Notice that $\text{H}^{-s,-w}(\mathbb{F}_q;\mathbb{Z}) = \pi_{s, w}^{\mathbb{F}_q}(\text{H}\mathbb{Z})$, so this also calculates the homotopy groups of the integral motivic cohomology spectrum.

There is also a \textit{mod-2 motivic cohomology spectrum} $\text{H}\mathbb{F}_2$.
By \cite{Voemotiviccohomology}, we have that
\[\text{H}^{-s,-w}(\mathbb{F}_q;\mathbb{Z}/2) = \pi_{s, w}^{\mathbb{F}_q}(\text{H}\mathbb{F}_2) :=\mathbb{M}_2^{\mathbb{F}_q} = (\text{K}_*^{\text{M}}(\mathbb{F}_q)/2)[\tau],\]
where $\text{K}^\text{M}_*(\mathbb{F}_q)$ denotes the \textit{Milnor K-theory} of $\mathbb{F}_q$, and where $|\tau|= (0, -1)$ and $|\text{K}^\text{M}_n(\mathbb{F}_q)| = (-n, -n)$. Recall that Milnor K-theory $\text{K}^\text{M}_*(\mathbb{F}_q)$ is defined as the graded tensor algebra over $\mathbb{Z}$ generated by the symbols $[a]$ in degree one for $a \in \mathbb{F}_q^\times$, subject to the relations $[a] \otimes [1-a] = 0$ and $[a]+[b]=[ab]$. It is a routine exercise to show that $\text{K}^{\text{M}}_n(\mathbb{F}_q)$ vanishes in degrees $n \geq 2$. In particular, since every element of $\mathbb{F}_q^\times$ is either a square or a non square, this implies that
\[(\text{K}^\text{M}_0(\mathbb{F}_q)/2) = (\text{K}^{\text{M}}_1(\mathbb{F}_q)/2) = \mathbb{Z}/2.\]
To differentiate between the cases of $q \equiv 1 \, (4)$ and $q \equiv 3 \, (4)$, we let $u$ be any generator of $\text{K}_1^\text{M}(\mathbb{F}_q)$ for $q \equiv 1 \, (4)$, and let $\rho = [-1]$ be a generator of $\text{K}^{\text{M}}_1(\mathbb{F}_q)$ for $q \equiv 3 \, (4)$. Notice that if $q \equiv 1 \, (4)$, then $-1$ has a square root in $\mathbb{F}_q$, hence $[-1] = 0 \in \text{K}^\text{M}_1(\mathbb{F}_q)$, so that $u$ is not represented by $-1$. Then we have identifications: 
\[
\mathbb{M}_2^{\mathbb{F}_q} =  \left\{ \begin{array}{cl}
\mathbb{F}_2[u, \tau]/(u^2) &  q \equiv 1 \, (4)\\
\mathbb{F}_2[\rho, \tau]/(\rho^2) & q \equiv 3 \, (4),
\end{array} \right. 
\] 
where $|\tau| = (0, -1)$ and $|u|=|\rho| = (-1, -1)$.

There is a \textit{hermitian K-theory} spectrum KQ \cite{Hor05, CalHarNar25} which is the main character of our work. The homotopy groups $\pi_{*,*}^{\mathbb{F}_q}(\text{KQ})$ intertwine the classical stable homotopy of the orthogonal, unitary, and symplectic K-groups of $\mathbb{F}_q$. We will only be interested in the orthogonal K-groups 
$\text{KO}_*(\mathbb{F}_q):= \pi_*(\text{BO}(\mathbb{F}_q)^+)$ and the symplectic K-groups $\text{KSp}_*(\mathbb{F}_q):=\pi_*(\text{BSp}(\mathbb{F}_q)^+)$; for more details, see \cite{Hor05}. These are realized by hermitian K-theory in the following way:
\[\pi_{s,w}^{\mathbb{F}_q}(\text{KQ}) = \left\{\begin{array}{rl}
    \text{KO}_{s-2w}{(\mathbb{F}_q}) &  w \equiv 0 \, (4);\\
    \text{KSp}_{s-2w}(\mathbb{F}_q) & w \equiv 2 \, (4).\\
\end{array} \right.\]
The hermitian K-theory spectrum is an $\mathbb{E}_\infty$ motivic ring spectrum \cite{CalHarNar25}. We let kq denote the very effective cover of KQ \cite{ARO20}, which we call the \emph{very effective hermitian K-theory}.
The ring structure on KQ lifts to kq, and the unit map $\mathbb{S} \to \text{kq}$ induces Morel's isomorphism \cite{RSOfirst}
\[\pi_{-n, -n}^{\mathbb{F}_q}(\mathbb{S}) \cong \text{K}_n^{\text{MW}}(\mathbb{F}_q).\]
In particular, $\pi_{0,0}^{\mathbb{F}_q}(\text{kq}) \cong \text{GW}(\mathbb{F}_q)$, the Grothendieck--Witt ring. We will not need all of the details of the very effective slice filtration (see recent work of Bannwart \cite[Section 1]{Bann-realbetti} for a wonderful recollection). For the purposes of our work, one should treat the very effective cover as an analogue of the  connective cover. Indeed, for $s \geq 0$ there is an isomorphism $\pi_{s,w}^{\mathbb{F}_q}(\text{KQ}) \cong \pi_{s,w}^{\mathbb{F}_q}(\text{kq}).$ However, we will see that the homotopy groups of kq do not vanish for $s <0$ (an interpretation of these homotopy groups in negative stem degrees is given in \Cref{rem:f5NegativeStemkq} and \Cref{rem:f3NegativeStemkq}). 

There are cofiber sequences of spectra:
    \[\Sigma^{1,1}\text{kq} \xrightarrow{\eta}\text{kq} \to \text{kgl},\]
    \[\Sigma^{2,1}\text{kgl} \xrightarrow{\beta}\text{kgl} \to \text{H}\mathbb{Z},\]
where kgl denotes the effective algebraic K-theory spectrum,
$\eta$ is the motivic Hopf map, and $\beta$ is the Bott periodicity class \cite{ARO20}. Combining the last morphism of each cofiber sequence gives a map 
\begin{equation}
\label{eq:kqToHZ}
\text{kq} \to \text{H}\mathbb{Z}
\end{equation}
which will feature heavily in our arguments.

Related to hermitian K-theory is the \textit{very effective symplectic K-theory} spectrum ksp, which is defined as the very effective cover of $\Sigma^{4,2}\text{KQ}$. The homotopy groups of ksp are a shifted version of the homotopy groups of kq, which we investigate in more detail in later sections. We note that there is a cofiber sequence
\begin{equation}
\label{cofib:kqTokspToHZ}
\Sigma^{4,2}\text{kq} \to \text{ksp} \to \text{H}\mathbb{Z}
\end{equation}
coming from the very effective slice filtration on $\Sigma^{4,2}\text{KQ}$. For more details on ksp and the very effective slice tower for KQ, see \cite[Section 3]{Realkqcoop}.

The hermitian K-theory of finite fields was calculated by Friedlander \cite[Theorem 1.7]{Friedlander76}. We display the relevant results in \Cref{table:fried}.
\begin{table}[H]
    \centering
    \setlength{\tabcolsep}{0.5em} 
    {\renewcommand{\arraystretch}{1.2}
    \begin{tabular}{|l||l|l|}
        \hline
       $n$ modulo 8 & $\text{KO}_n(\mathbb{F}_q)$  & $\text{KSp}_n(\mathbb{F}_q)$  \\
       \hline
       \hline
       0 & $\mathbb{Z}/2$ & 0\\
       1 & $\mathbb{Z}/2 \oplus \mathbb{Z}/2$ &0\\
       2 & $\mathbb{Z}/2$&0\\
       3 & $\mathbb{Z}/(q^{(n+1)/2}-1)$ &$\mathbb{Z}/(q^{(n+1)/2}-1)$\\
       4 & 0 &$\mathbb{Z}/2$\\
       5 & 0 &$\mathbb{Z}/2 \oplus \mathbb{Z}/2$\\
       6 & 0 &$\mathbb{Z}/2$\\
       7 & $\mathbb{Z}/(q^{(n+1)/2}-1)$ &$\mathbb{Z}/(q^{(n+1)/2}-1)$\\
       \hline
    \end{tabular}}
    \caption{Friedlander's calculation of $\text{KO}_n(\mathbb{F}_q)$ and $\text{KSp}_n(\mathbb{F}_q)$, where $\text{char}(\mathbb{F}_q) \neq 2$ and $n > 0$.}    
    \label{table:fried}
\end{table}

\subsection{The dual Steenrod algebra and Brown--Gitler comodules}
\label{subsec:dualsteenrod}
The \textit{dual Steenrod algebra} $\euscr{A}^\vee := \pi_{*,*}^{\mathbb{F}_q}(\text{H}\mathbb{F}_2 \otimes \text{H}\mathbb{F}_2)$ was calculated in positive characteristic by Hoyois, Kelly, and \O stv\ae r \cite[Proposition 4.3]{HKO17}:
\[\euscr{A}^\vee = \mathbb{M}_2^{\mathbb{F}_q}[\overline{\tau}_0, \overline{\tau}_1, \dots, \overline{\xi}_1, \overline{\xi}_2, \dots]/T,\]
where $|\overline{\tau}_i| = (2^{i+1}-1, 2^i-1)$ and $|\overline{\xi}_i| = (2^{i+1}-2, 2^i-1).$ The ideal $T$ of relations is dependent on the base field $\mathbb{F}_q$. In the cases we are concerned with, we have that
\[T = \left\{\begin{array}{ll}
    (\overline{\tau}_i^2 = \tau \overline{\xi}_{i+1} ) & q \equiv 1 \, (4)\\
    (\overline{\tau}_i^2 = \tau\overline{\xi}_{i+1} + \rho \overline{\tau}_{i+1} + \rho \overline{\tau}_0\overline{\xi}_{i+1}) &  q \equiv 3 \, (4).\\
\end{array}\right.\]
Note that the pair $(\mathbb{M}_2^{\mathbb{F}_q}, \euscr{A}^\vee)$ is a Hopf algebroid rather than a Hopf algebra \cite[Appendix A]{Rav86}.

For $n \geq 0$, let $\euscr{A}(n)^\vee$ be the quotient algebra
\[\euscr{A}(n)^\vee \cong \euscr{A}^\vee/(\overline{\xi}_1^{2^n}, \overline{\xi}_2^{2^{n-1}}, \dots, \overline{\xi}^2_n, \overline{\xi}_{n+1}, \overline{\xi}_{n+2,} \dots, \overline{\tau}_{n+1}, \overline{\tau}_{n+2}, \dots).\]
There is a sub-Hopf algebra of the motivic Steenrod algebra
\[\euscr{A}(n) = \langle \text{Sq}^1, \text{Sq}^2, \hdots, \text{Sq}^{2^n} \rangle;\]
let $(\euscr{A}\modmod \euscr{A}(n))^\vee$ be the subalgebra of $\euscr{A}^\vee$ given by
\[(\euscr{A} \modmod \euscr{A}(n))^\vee = \mathbb{M}_2^{\mathbb{F}_q}[\overline{\xi}_1^{2^n}, \overline{\xi}_2^{2^{n-1}}, \hdots, \overline{\tau}_{n+1}, \hdots]/(\overline{\tau}_i^2 = \rho \overline{\tau}_{i+1}+\rho \overline{\tau_0}\overline{\xi}_{i+1} + \tau \overline{\xi}_{i+1}).\]
These subalgebras naturally arise via motivic homology. One can show that there are $\euscr{A}^\vee$-comodule algebra isomorphisms
\[\text{H}_{*,*}(\text{H}\mathbb{Z}) \cong (\euscr{A} \modmod\euscr{A}(0))^\vee, \quad \text{H}_{*,*}(\text{kq}) \cong (\euscr{A} \modmod \euscr{A}(1))^\vee\] by using the long exact sequence in homology associated to the cofiber sequences \cite{ARO20}
\[\text{H}\mathbb{Z} \xrightarrow{2}\text{H}\mathbb{Z} \to \text{H}\mathbb{F}_2, \quad \Sigma^{1,1}\text{kq} \xrightarrow{\eta}\text{kq} \to \text{kgl}.\]

We next introduce motivic Brown--Gitler comodules. We define the \textit{Mahowald weight filtration} on $\euscr{A}^\vee$ by setting 
\[\text{wt}(\overline{\xi}_i)=\text{wt}(\overline{\tau}_i)=2^i, \quad \text{wt}(\tau)=\text{wt}(\rho)=\text{wt}(u)=0\] and letting $\text{wt}(xy)=\text{wt}(x)+\text{wt}(y)$. This naturally extends to the subalgebras $(\euscr{A} \modmod \euscr{A}(n))^\vee$. The \textit{motivic Brown--Gitler comodule} $B_n^{\mathbb{F}_q}(k)$ is the $\euscr{A}(n)^\vee$-comodule
\[B_n^{\mathbb{F}_q}(k) = \langle x \in (\euscr{A}\modmod \euscr{A}(n))^\vee : \text{wt}(x) \leq 2^{n+1}k\rangle.\]
A key property of the Brown--Gitler comodules that we will use is the following.
\begin{thm}[{\cite[Theorem 3.20]{CQ21}}]
\label{kq brown gitler homology}
    There is an isomorphism of $\euscr{A}(1)^\vee$-comodules
    \[(\euscr{A}\modmod \euscr{A}(1))^\vee \cong \bigoplus_{k \geq 0}\Sigma^{4k, 2k}B_0^{\mathbb{F}_q}(k).\]
\end{thm}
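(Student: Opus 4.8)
The plan is to prove the splitting by means of the Mahowald weight filtration, following the $\mathbb{C}$-motivic template of \cite{CQ21} while carefully separating the two arithmetic cases. The first step is to pin down the relevant comodule structure: $(\euscr{A}\modmod\euscr{A}(1))^\vee$ sits inside $\euscr{A}^\vee$ as a left coideal subalgebra, and its $\euscr{A}(1)^\vee$-coaction is the restriction of the coproduct of $\euscr{A}^\vee$ followed by projection of the \emph{right-hand} tensor factor along $\euscr{A}^\vee \twoheadrightarrow \euscr{A}(1)^\vee$; the comodule $B_0^{\mathbb{F}_q}(k) \subseteq (\euscr{A}\modmod\euscr{A}(0))^\vee$ carries the analogous structure. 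The key observation is that in the coproduct formulas $\psi(\overline{\xi}_n) = \sum_i \overline{\xi}_{n-i}^{2^i}\otimes\overline{\xi}_i$ and $\psi(\overline{\tau}_n) = \overline{\tau}_n\otimes 1 + \sum_i \overline{\xi}_{n-i}^{2^i}\otimes\overline{\tau}_i$, every left-hand factor has Mahowald weight exactly $\text{wt}(\overline{\xi}_n)$, resp.\ $\text{wt}(\overline{\tau}_n)$, since $\text{wt}(\overline{\xi}_{n-i}^{2^i}) = 2^i\cdot 2^{n-i} = 2^n$. Consequently the coaction of any monomial $m$ is a sum of terms whose left factor has weight exactly $\text{wt}(m)$, so the weight filtration is a filtration by $\euscr{A}(1)^\vee$-subcomodules.

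Second, I would exploit that every generator of $(\euscr{A}\modmod\euscr{A}(1))^\vee$—namely $\overline{\xi}_1^2,\overline{\xi}_2,\overline{\tau}_2,\overline{\xi}_3,\overline{\tau}_3,\dots$—has weight divisible by $4$. When $q\equiv 1\,(4)$ the active relations $\overline{\tau}_i^2 = \tau\overline{\xi}_{i+1}$ are weight-homogeneous, so weight is an honest grading valued in $4\mathbb{Z}$; combined with the previous paragraph this at once gives a decomposition $(\euscr{A}\modmod\euscr{A}(1))^\vee \cong \bigoplus_{k\geq 0} M_{4k}$ into $\euscr{A}(1)^\vee$-comodules, where $M_{4k}$ is the weight-$4k$ summand. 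When $q\equiv 3\,(4)$ the relation acquires the extra terms $\rho\overline{\tau}_{i+1}+\rho\overline{\tau}_0\overline{\xi}_{i+1}$ and is no longer weight-homogeneous, so weight is only a filtration; here I would instead split the associated graded by the same mechanism and then argue that the filtration itself splits.

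Third, I would identify the (associated graded) weight-$4k$ piece with $\Sigma^{4k,2k}B_0^{\mathbb{F}_q}(k)$. The class $\overline{\xi}_1^{2k}$, of internal bidegree $(4k,2k)$ and weight $4k$, is an $\euscr{A}(1)^\vee$-comodule primitive—one computes $\psi(\overline{\xi}_1^{2k}) = (\overline{\xi}_1^2\otimes 1 + 1\otimes\overline{\xi}_1^2)^k$ and notes that every term with a nontrivial right factor dies since $\overline{\xi}_1^2 = 0$ in $\euscr{A}(1)^\vee$—and serves as the bottom cell, matching $1\in B_0^{\mathbb{F}_q}(k)$. The residual coaction then filters $M_{4k}$ exactly as the weight filtration defines $B_0^{\mathbb{F}_q}(k)$, after halving weights. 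I would make this precise by constructing a weight-halving comparison map of comodules, checking it is an isomorphism on the bottom cell and compatible with the coaction, and concluding by a Poincaré-series count together with the defining construction of the Brown--Gitler comodules.

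The main obstacle is the case $q\equiv 3\,(4)$. Because the relation $\overline{\tau}_i^2=\tau\overline{\xi}_{i+1}+\rho\overline{\tau}_{i+1}+\rho\overline{\tau}_0\overline{\xi}_{i+1}$ mixes weights, the splitting of the weight filtration is not formal, and one must verify that the $\rho$-linear correction terms produce no nontrivial comodule extension between consecutive Brown--Gitler summands. I expect this to reduce to a coweight bookkeeping argument forcing the candidate extension classes to vanish, but it is precisely here that the $\mathbb{F}_q$-arithmetic enters and the proof genuinely departs from the $\mathbb{C}$-motivic one of \cite{CQ21}; this is why the two congruence classes of $q$ are treated in separate sections.
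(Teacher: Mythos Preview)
The paper does not give its own proof of this statement: it is quoted verbatim from \cite[Theorem 3.20]{CQ21} and used as input. Your outline is essentially the Culver--Quigley argument, and it is correct as far as it goes, but your final paragraph contains a genuine misreading of both the algebra and the paper's structure.

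The obstacle you flag in the $q\equiv 3\,(4)$ case is not there. In the relation
\[
\overline{\tau}_i^2 \;=\; \tau\overline{\xi}_{i+1} + \rho\overline{\tau}_{i+1} + \rho\overline{\tau}_0\overline{\xi}_{i+1},
\]
the only weight-inhomogeneous term is $\rho\overline{\tau}_0\overline{\xi}_{i+1}$, and it involves $\overline{\tau}_0$. But neither $(\euscr{A}\modmod\euscr{A}(0))^\vee$ nor $(\euscr{A}\modmod\euscr{A}(1))^\vee$ contains $\overline{\tau}_0$ among its generators, so in these quotients the relation reduces to $\overline{\tau}_i^2 = \tau\overline{\xi}_{i+1} + \rho\overline{\tau}_{i+1}$, which \emph{is} weight-homogeneous. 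Hence the Mahowald weight is an honest grading on both algebras regardless of the congruence class of $q$, your direct-sum decomposition in the second step goes through uniformly, and there is no comodule extension problem to solve. The proof of this theorem does not split into cases; the result is stated once in the background section and applies to all $\mathbb{F}_q$ with $q$ odd.

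Your claim that ``this is why the two congruence classes of $q$ are treated in separate sections'' is therefore wrong. Sections~\ref{section:trivial} and~\ref{section:nontrivial} are separated because the algebra $\text{Ext}_{\euscr{A}(1)^\vee}(\mathbb{M}_2^{\mathbb{F}_q})$ and the ensuing $\textbf{aAHSS}$ differentials genuinely differ between the two cases (compare \Cref{lemma:aAHSSDif}, where a nontrivial $d_3$ appears only when $q\equiv 3\,(4)$), not because the Brown--Gitler decomposition of $(\euscr{A}\modmod\euscr{A}(1))^\vee$ requires separate treatment.
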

The $0^{th}$ Brown--Gitler comodule is given by $B_0^{\mathbb{F}_q}(0) \cong\mathbb{M}_2^{\mathbb{F}_q}$, and the first Brown--Gitler comodule is given by $B_0^{\mathbb{F}_q}(1) \cong\mathbb{M}_2^{\mathbb{F}_q}\{1, \overline{\xi}_1, \overline{\tau}_1\}$. It was shown in \cite[Example 3.12]{CQ21} that there is a spectrum $\text{H}\mathbb{Z}_1^{\mathbb{F}_q}$ such that
\[\text{H}_{*,*}(\text{H}\mathbb{Z}_1^{\mathbb{F}_q}) \cong B_0^{\mathbb{F}_q}(1)\]
as $\euscr{A}^\vee$-comodules. Additionally, there is an equivalence of spectra up to 2-completion \cite[Proposition 3.3]{Realkqcoop}:
\begin{equation}
\label{eq:kspSplitting}
    \text{ksp} \simeq \text{kq} \otimes \text{H}\mathbb{Z}_1^{\mathbb{F}_q}.
\end{equation}
In general, the question of finding a motivic spectrum realizing a Brown--Gitler comodule is not simple to answer; see \cite[Remark 2.9]{Realkqcoop}.

There are short exact sequences of $\euscr{A}(1)^\vee$-comodules relating motivic Brown--Gitler comodules. We refer the reader to \cite{CQ21} for a proof.

\begin{proposition}[{\cite[Lemma 3.21]{CQ21}}]
\label{prop:ses bg}
    There are short exact sequences of $\euscr{A}(1)^\vee$-comodules:
    \[0 \to \Sigma^{4k, 2k}B_0^{\mathbb{F}_q}(k) \to B_0^{\mathbb{F}_q}(2k) \to B_1^{\mathbb{F}_q}(k-1) \otimes (\euscr{A}(1) \modmod \euscr{A}(0))^\vee \to 0,\]
    \[0 \to \Sigma^{4k, 2k}B_0^{\mathbb{F}_q}(k) \otimes B_0^{\mathbb{F}_q}(1) \to B_0^{\mathbb{F}_q}(2k+1) \to B_1^{\mathbb{F}_q}(k-1) \otimes (\euscr{A}(1) \modmod \euscr{A}(0))^\vee \to 0.\]
\end{proposition}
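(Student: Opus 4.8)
The plan is to prove both short exact sequences by hand, working inside the explicit polynomial presentations of $(\euscr{A}\modmod\euscr{A}(0))^\vee$ and $(\euscr{A}\modmod\euscr{A}(1))^\vee$ and organizing everything by the Mahowald weight filtration. The first step is bookkeeping. I would fix the monomial $\mathbb{M}_2^{\mathbb{F}_q}$-basis of $(\euscr{A}\modmod\euscr{A}(0))^\vee$ in which each $\overline{\tau}_i$ occurs exterior-ly (reducing $\overline{\tau}_i^2$ via the defining relation of $T$) and each $\overline{\xi}_i$ freely, and record for each basis monomial both its internal bidegree and its Mahowald weight. Since the weight filtration is multiplicative and the coproduct is additive for it, the weight-$\leq 2k$ and weight-$\leq 4k$ truncations $B_0^{\mathbb{F}_q}(k)$ and $B_1^{\mathbb{F}_q}(k-1)$ are genuine $\euscr{A}(1)^\vee$-subcomodules, so every term in the sequences is well defined. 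I would also record the module-level factorization $(\euscr{A}\modmod\euscr{A}(0))^\vee\cong(\euscr{A}\modmod\euscr{A}(1))^\vee\otimes_{\mathbb{M}_2^{\mathbb{F}_q}}(\euscr{A}(1)\modmod\euscr{A}(0))^\vee$ arising from writing each monomial in terms of $\overline{\xi}_1^2,\overline{\xi}_2,\overline{\tau}_2,\dots$ together with the weight-$\leq 2$ generators $\overline{\xi}_1,\overline{\tau}_1$. This is only an isomorphism of modules, not of comodules, but it is exactly the device that produces the right-hand factor $(\euscr{A}(1)\modmod\euscr{A}(0))^\vee$ and that makes all dimension counts finite and explicit.

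Next I would construct the two maps. The surjection $\psi$ should be the quotient that, in the module factorization above, retains the $(\euscr{A}(1)\modmod\euscr{A}(0))^\vee$-factor carrying $\overline{\xi}_1,\overline{\tau}_1$ and collapses the $(\euscr{A}\modmod\euscr{A}(1))^\vee$-factor onto its weight-$\leq 4(k-1)$ truncation $B_1^{\mathbb{F}_q}(k-1)$; the inclusion $\phi$ should identify $\Sigma^{4k,2k}B_0^{\mathbb{F}_q}(k)$ with the complementary, top Mahowald-weight subcomodule, whose bottom class is $\overline{\xi}_1^{2k}$. The degree shift $(4k,2k)=|\overline{\xi}_1^{2k}|$ is then forced by where the bottom class must land, and abstractly $\phi$ is a \emph{doubling} (Frobenius-type) comodule map rather than a naive multiplication, which is why $\Sigma^{4k,2k}B_0^{\mathbb{F}_q}(k)$ can appear as the nontrivial kernel even though its image sits in high weight. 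The real content is that $\phi$ and $\psi$, so described, are maps of $\euscr{A}(1)^\vee$-comodules and not merely of $\mathbb{M}_2^{\mathbb{F}_q}$-modules, which I would check directly from the coproduct formulas for $\overline{\xi}_n$ and $\overline{\tau}_n$ after projecting the relevant tensor factor to $\euscr{A}(1)^\vee$.

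Granting that the maps respect the coaction, exactness is a counting argument. $\psi$ is surjective by construction and $\phi$ injective because it is a doubling, so it suffices to confirm $\ker\psi=\operatorname{im}\phi$ by comparing the three-variable Poincaré series (in stem, weight, and Mahowald weight) of $B_0^{\mathbb{F}_q}(2k)$ with those of $\Sigma^{4k,2k}B_0^{\mathbb{F}_q}(k)$ and $B_1^{\mathbb{F}_q}(k-1)\otimes(\euscr{A}(1)\modmod\euscr{A}(0))^\vee$; the module factorization reduces this to a weight-by-weight check (for instance at $k=1$ the ranks balance as $3+4=7$), and it is consistent with the splitting of \Cref{kq brown gitler homology}. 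The second short exact sequence runs along identical lines: for the odd truncation $B_0^{\mathbb{F}_q}(2k+1)$ one additional block of weight-$\leq 2$ classes survives in the complementary subcomodule, and this is precisely what upgrades the left-hand term from $\Sigma^{4k,2k}B_0^{\mathbb{F}_q}(k)$ to $\Sigma^{4k,2k}B_0^{\mathbb{F}_q}(k)\otimes B_0^{\mathbb{F}_q}(1)$.

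The step I expect to be the main obstacle is verifying that $\phi$ and $\psi$ are comodule maps, and this is where the positive characteristic of $\mathbb{F}_q$ genuinely enters. For $q\equiv 1\,(4)$ the coproduct is the clean motivic one and the only corrections are $\tau$-linear, so the argument is essentially that of the $\mathbb{C}$-motivic statement in \cite{CQ21}. For $q\equiv 3\,(4)$, however, the relation $\overline{\tau}_i^2=\tau\overline{\xi}_{i+1}+\rho\overline{\tau}_{i+1}+\rho\overline{\tau}_0\overline{\xi}_{i+1}$ feeds $\rho$-linear terms into both the multiplication and the coaction, so the doubling map and the projection acquire $\rho$-correction terms that must be pinned down exactly for the maps to remain comodule maps and for the sequences to remain short exact. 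For this reason I would carry out the two congruence classes of $q$ as separate computations, mirroring the division into \Cref{section:trivial} and \Cref{section:nontrivial}, reducing to the $\mathbb{C}$-motivic computation wherever the $\rho$-terms vanish and tracking them carefully where they do not.
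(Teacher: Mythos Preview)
The paper does not prove this proposition: immediately preceding the statement it writes ``We refer the reader to \cite{CQ21} for a proof,'' and no argument is supplied. There is thus nothing in the paper itself to compare your attempt against beyond that citation.

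Your outline is essentially the Culver--Quigley (and ultimately Mahowald) argument, and it is sound in its broad strokes: use the module-level factorization $(\euscr{A}\modmod\euscr{A}(0))^\vee\cong(\euscr{A}\modmod\euscr{A}(1))^\vee\otimes_{\mathbb{M}_2^{\mathbb{F}_q}}(\euscr{A}(1)\modmod\euscr{A}(0))^\vee$, project the left factor onto its weight-$\leq 4(k-1)$ truncation to define $\psi$, identify the kernel with $\Sigma^{4k,2k}B_0^{\mathbb{F}_q}(k)$, and verify the comodule structure from the coproduct formulas. One correction is worth making. Your expectation that the $q\equiv 3\,(4)$ case needs a separate argument, with $\rho$-correction terms appearing in $\phi$ and $\psi$, is unwarranted here. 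The coproduct formulas for $\overline{\xi}_n$ and $\overline{\tau}_n$ in $\euscr{A}^\vee$ do not involve $\rho$; the nontrivial Bockstein enters only through the right unit on $\tau$ and through the relation for $\overline{\tau}_i^2$. In $(\euscr{A}\modmod\euscr{A}(0))^\vee$ one has $\overline{\tau}_0=0$, so the relation reduces to $\overline{\tau}_i^2=\tau\overline{\xi}_{i+1}+\rho\overline{\tau}_{i+1}$, which is homogeneous of Mahowald weight $2^{i+1}$ since $\text{wt}(\rho)=\text{wt}(\tau)=0$. Hence the weight filtration and the module factorization behave identically in both congruence classes, the maps $\phi$ and $\psi$ are given by exactly the same formulas as over $\mathbb{C}$, and the comodule verification goes through uniformly. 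You do not need to split into cases.
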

\begin{remark}
    In \cite{MorPetTat-BPGL1}, we define Brown--Gitler comodules using the Mahowald weight filtration on the subalgebras $(\euscr{A} \modmod \euscr{E}(n))^\vee$. This leads to a different family of subcomodules, except in the case where $n=0$.
\end{remark}

An interesting feature in the mod-2 motivic cohomology of finite fields is that, while $\mathbb{M}_2^{\mathbb{F}_q}$ is consistent across all fields of characteristic different from 2, the \textit{Bockstein homomorphism}, which is the connecting homomorphism associated to the coefficient sequence
\[0 \to \mathbb{Z}/2 \to \mathbb{Z}/4 \to \mathbb{Z}/2 \to 0,\]
acts differently on $\text{H}^{s,w}(\mathbb{F}_q; \mathbb{Z}/2)$ depending on whether $q \equiv 1 \, (4)$ or $q \equiv 3 \, (4)$. Notice that this Bockstein is represented by $\text{Sq}^1$ in the motivic Steenrod algebra \cite{Voemotiviccohomology}. We can phrase the action of the Bockstein in terms of the action of the motivic Steenrod algebra on $\mathbb{M}_2^{\mathbb{F}_q}$.
\begin{proposition}[{\cite{WO-finite}}]
    The action of $\textup{Sq}^1$ on $\mathbb{M}_2^{\mathbb{F}_q}$ is given by $\textup{Sq}^1(\tau)=\rho$. In particular, the action is trivial if and only if $q \equiv 1 \, (4)$.
\end{proposition}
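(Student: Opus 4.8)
The plan is to reduce the proposition to the single computation of $\mathrm{Sq}^1(\tau)$ and then evaluate it using étale (Galois) cohomology. Recall that $\mathrm{Sq}^1$ is the mod-$2$ Bockstein, the connecting homomorphism for $0 \to \mathbb{Z}/2 \to \mathbb{Z}/4 \to \mathbb{Z}/2 \to 0$; as such it is a derivation on $\mathbb{M}_2^{\mathbb{F}_q}$ raising cohomological degree by one and fixing weight. Writing the generators in cohomological bidegree, so that $\tau \in \mathrm{H}^{0,1}$ and $u, \rho \in \mathrm{H}^{1,1}$, I would first observe that $\mathrm{Sq}^1$ annihilates the weight-one degree-one generator for degree reasons: its target $\mathrm{H}^{2,1}(\mathbb{F}_q;\mathbb{Z}/2)$ vanishes, since $\mathbb{M}_2^{\mathbb{F}_q}$ is concentrated in cohomological degrees $0$ and $1$. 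As $\mathbb{M}_2^{\mathbb{F}_q}$ is generated as a ring by $\tau$ together with this degree-one class, the derivation $\mathrm{Sq}^1$ is completely determined by the single element $\mathrm{Sq}^1(\tau) \in \mathrm{H}^{1,1}(\mathbb{F}_q;\mathbb{Z}/2)$, and the action is trivial if and only if $\mathrm{Sq}^1(\tau) = 0$.

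To compute $\mathrm{Sq}^1(\tau)$, I would pass to étale cohomology. By the norm residue isomorphism (Beilinson--Lichtenbaum), the comparison map gives $\mathrm{H}^{s,w}(\mathbb{F}_q;\mathbb{Z}/2) \cong \mathrm{H}^s_{\text{\'et}}(\mathbb{F}_q;\mu_2^{\otimes w})$ for $s \leq w$, compatibly with the Bockstein, since both sides receive it from the same coefficient sequence; this covers both $\tau$, where $(s,w) = (0,1)$, and its image, where $(s,w) = (1,1)$. Under this identification $\tau$ is the generator $-1$ of $\mathrm{H}^0_{\text{\'et}}(\mathbb{F}_q;\mu_2) = \mu_2(\mathbb{F}_q)$, and I would compute its Bockstein from the sequence $1 \to \mu_2 \to \mu_4 \xrightarrow{(-)^2} \mu_2 \to 1$. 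Lifting $-1$ to a primitive fourth root of unity $i$ and applying the connecting map yields the $1$-cocycle $\sigma \mapsto \sigma(i)\, i^{-1}$; evaluating on the Frobenius $\sigma(x) = x^q$ gives $i^{\,q-1}$, which equals $1$ when $q \equiv 1 \, (4)$ and $-1$ when $q \equiv 3 \, (4)$. Thus $\mathrm{Sq}^1(\tau)$ is nonzero exactly when $q \equiv 3 \, (4)$.

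It then remains to identify this class with $\rho$. Kummer theory together with $\mathrm{Pic}(\mathbb{F}_q) = 0$ gives $\mathrm{H}^1_{\text{\'et}}(\mathbb{F}_q;\mu_2) \cong \mathbb{F}_q^{\times}/(\mathbb{F}_q^{\times})^2 \cong \mathbb{Z}/2$, under which $\rho = [-1]$ corresponds to the class of $-1$; this is nonzero precisely when $-1$ is a non-square, i.e.\ when $q \equiv 3 \, (4)$. Since the target is $\mathbb{Z}/2$ and both $\mathrm{Sq}^1(\tau)$ and $\rho$ are nonzero under exactly the same congruence condition, they must coincide, so $\mathrm{Sq}^1(\tau) = \rho$. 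The ``in particular'' clause is then immediate from the first paragraph, as $\rho = 0$ if and only if $q \equiv 1 \, (4)$.

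The step I expect to require the most care is not the Frobenius cocycle computation, which is elementary, but verifying that the motivic $\mathrm{Sq}^1$ corresponds to the étale Bockstein under the comparison map in the relevant range, i.e.\ that Beilinson--Lichtenbaum is compatible with the coefficient sequences on the nose. A purely motivic alternative would compute $\mathrm{Sq}^1(\tau)$ directly from the construction of $\tau$ and the motivic coefficient sequence, avoiding étale cohomology, but the explicitness of the Frobenius computation makes the étale route the cleaner one here.
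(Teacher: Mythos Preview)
Your proof is correct. Note, however, that the paper does not actually prove this proposition: it is stated with a citation to \cite{WO-finite} and no proof environment follows. The surrounding text merely records that $\mathrm{Sq}^1$ is the Bockstein for the sequence $0 \to \mathbb{Z}/2 \to \mathbb{Z}/4 \to \mathbb{Z}/2 \to 0$ and then invokes the cited result. So there is no ``paper's own proof'' to compare against; you have supplied an argument where the paper defers to the literature.

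Your approach---reducing to the value on $\tau$ by the derivation property and degree reasons, then passing to \'etale cohomology via Beilinson--Lichtenbaum and computing the Bockstein cocycle on Frobenius---is the standard route and is carried out cleanly. The one point you flag, compatibility of the motivic and \'etale Bocksteins under the comparison map, is indeed the only place requiring outside input: it follows because the Beilinson--Lichtenbaum isomorphism in weight~$1$ identifies the motivic coefficient sequence $\mathbb{Z}/2(1) \to \mathbb{Z}/4(1) \to \mathbb{Z}/2(1)$ with the \'etale sequence $\mu_2 \to \mu_4 \xrightarrow{(-)^2} \mu_2$, so the connecting maps match. With that granted, the Frobenius computation $i^{q-1}$ and the Kummer identification of $\rho$ are exactly right.
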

This implies that $\mathbb{M}_2^{\mathbb{F}_q}$ is central in $\euscr{A}^\vee$ when $q \equiv 1 \, (4)$, meaning that $\euscr{A}^\vee$ is a Hopf algebra over $\mathbb{M}_2^{\mathbb{F}_q}$. We will often refer to the case of $q \equiv 1 \, (4)$ as the \textit{trivial Bockstein action} and the case of $q \equiv 3 \, (4)$ as the \textit{nontrivial Bockstein action}. This subtle difference makes a noticeable impact on computation and is the reason we separate our work into distinct sections.

\subsection{Motivic Adams spectral sequence}
Let $\text{E} \in \text{SH}(\mathbb{F}_q)$ be a motivic ring spectrum. There is a canonical Adams tower associated to the unit map $\mathbb{S} \to \text{E}$ taking the form
\[\begin{tikzcd}
	{\mathbb{S}} & {\Sigma^{-1, 0}\overline{\text{E}}} & {\Sigma^{-2, 0}\overline{\text{E}} \otimes \overline{\text{E}}} & \cdots \\
	{\text{E}} & {\Sigma^{-1, 0}\overline{\text{E}} \otimes \text{E}} & {\Sigma^{-2, 0}\overline{\text{E}} \otimes \overline{\text{E}} \otimes \text{E}}
	\arrow[from=1-1, to=2-1]
	\arrow[from=1-2, to=1-1]
	\arrow[from=1-2, to=2-2]
	\arrow[from=1-3, to=1-2]
	\arrow[from=1-3, to=2-3]
	\arrow[from=1-4, to=1-3]
\end{tikzcd}\]
where $\overline{\text{E}}$ is the cofiber of the unit map $\mathbb{S} \to \text{E}$. For any motivic spectrum $\mathrm{X}$, we can apply the functor $\pi_{*,*}^{\mathbb{F}_q}(\mathrm{X} \otimes-)$. This yields the \textit{\textup{E}-based motivic Adams spectral sequence for $X$}. By construction, the $\mathrm{E}_1$-page of this spectral sequence has signature
\[\mathrm{E}^{s,f,w}_1 = \pi_{s+f, w}^{\mathbb{F}_q}(\text{E} \otimes \overline{\text{E}}^{\otimes f} \otimes \mathrm{X}) \implies \pi_{s, f}^{\mathbb{F}_q}(\mathrm{X}_\text{E}), \quad d_r:\mathrm{E}_r^{s,f,w} \to \mathrm{E}_r^{s-1, f+r, w}\]
where $\mathrm{X}_\text{E}$ denotes the E-nilpotent completion of X \cite{Bousfield-localization}. Convergence is in general not immediate, but we will only be interested in particular cases that are well-understood.

For $\text{E}=\text{kq}$, the resulting motivic Adams spectral sequence is called the kq-\textit{resolution}. This has signature
\[
\mathrm{E}^{s,f,w}_1 = \pi_{s+f, w}^{\mathbb{F}_q}(\text{kq} \otimes \overline{\text{kq}}^{\otimes f} \otimes \mathrm{X}) \implies \pi_{s, f}^{\mathbb{F}_q}(\mathrm{X}).
\]
Convergence of the kq-resolution was proven for $X=\mathbb{S}$ in \cite[Theorem 2.1]{CQ21} over any base field of characteristic different than 2. Culver--Quigley use the $\mathbb{C}$-motivic kq-resolution for the sphere to compute the $v_1$-periodic stable stems. We computed the $\text{E}_1$-page of the kq-resolution in $\mathbb{R}$-motivic homotopy theory in \cite{Realkqcoop}. The primary aim of this paper is to compute the $\mathrm{E}_1$-page of the kq-resolution in $\mathbb{F}_q$-motivic homotopy theory. 

For $\text{E} = \text{H}\mathbb{F}_2$, the resulting motivic Adams spectral sequence will be referred to as the $\textbf{mASS}^{\mathbb{F}_q}(\mathrm{X})$. In this case, since $\euscr{A}^\vee = \pi_{*,*}^{\mathbb{F}_q}(\text{H}\mathbb{F}_2 \otimes \text{H}\mathbb{F}_2)$ is flat as a module over $\mathbb{M}_2^{\mathbb{F}_q}$, we are able to pass directly to the $\mathrm{E}_2$-page. The $\textbf{mASS}^{\mathbb{F}_q}(\mathrm{X})$ has signature
\[\mathrm{E}^{s,f,w}_2 = \text{Ext}^{s,f,w}_{\euscr{A}^\vee}(\text{H}_{*,*}(\mathrm{X})) \implies \pi_{s,f}^{\mathbb{F}_q}(\mathrm{X}).\]
Convergence for this motivic Adams spectral sequence was first studied by \cite{HKO-convergencemASS,DImASS}. We will study the ring of cooperations for kq, and hence determine the $\text{E}_1$-page of the kq-resolution, by computing the $\textbf{mASS}^{\mathbb{F}_q}(\text{kq} \otimes \text{kq}).$

\begin{remark}
    Another way to extract $v_1$-periodicity is by studying the $\text{BPGL} \langle 1 \rangle$-motivic Adams spectral sequence, where $\text{BPGL} \langle 1 \rangle$ is the first truncated motivic Brown--Peterson spectrum \cite{Hu-Kriz-remarks}. The study of this spectral sequence was initiated in joint work with Petersen and Tatum \cite{MorPetTat-BPGL1}, where we computed the ring of cooperations $\pi_{*,*}^F(\mathrm{BPGL}\langle 1 \rangle \otimes \mathrm{BPGL} \langle 1 \rangle )$ at all primes and over the fields $\mathbb{C}, \mathbb{R},$ and $\mathbb{F}_q$. The truncated motivic Brown--Peterson cooperations actually offer a spectrum level decomposition, which is absent in the case of kq (see \Cref{remark:FailureForSpectrumLevelSplitting}). 
    
    Historically, at the prime 2, $v_1$-periodicity is more easily accessible by means of the $\mathrm{bo}$-resolution as opposed to the $\mathrm{BP}\langle 1 \rangle \simeq \mathrm{bu}$-resolution. However, the motivic Hopf map $\eta$, which is $v_1$-periodic, is non-nilpotent. This leads to a division in of $v_1$-periodic classes into two families: those which are $\eta$-periodic, of which we have a firm understanding \cite{AndMil17,GuiIsa16_Rmotivic_etaperiodic,Wil18_etainvertedrationals,BH21}; and those which are not $\eta$-periodic, which presumably are just as easily accessible via the $\mathrm{BPGL}\langle 1 \rangle \simeq \mathrm{kgl}$-resolution as the $\mathrm{kq}$-resolution. It will be interesting to draw comparisons between these two spectral sequences at their .
\end{remark}

\subsection{The inductive process to calculate the ring of cooperations}
\label{inductive process section}
In this section, we describe an inductive process to compute the $\text{E}_2$-page of the $\textbf{mASS}^{\mathbb{F}_q}(\text{kq} \otimes \text{kq})$. This involves a delicate analysis of many algebraic Atiyah--Hirzebruch spectral sequences. We refer the reader to \cite[Section 4]{Realkqcoop} for more details.

Observe that the $\textbf{mASS}^{\mathbb{F}_q}(\text{kq} \otimes \text{kq})$ has signature
\[\mathrm{E}_2^{s,f,w} = \text{Ext}^{s,f,w}_{\euscr{A}^\vee}(\textup{H}_{*,*}(\textup{kq} \otimes \textup{kq})) \implies \pi_{s,w}^{\mathbb{F}_q}(\textup{kq} \otimes \textup{kq}).\]
The K\"unneth spectral sequence
\[\mathrm{E}_2 = \text{Tor}^{\mathbb{M}_2^{\mathbb{F}_q}}(\text{H}_{*,*}(\text{kq}), \text{H}_{*,*}(\text{kq})) \implies \text{H}_{*,*}(\text{kq} \otimes \text{kq})\]
collapses on the $\textup{E}_2$-page \cite[Proposition 2.7]{Realkqcoop}, giving an isomorphism
\[\text{H}_{*,*}(\text{kq} \otimes \text{kq}) \cong \text{H}_{*,*}(\text{kq}) \otimes\text{H}_{*,*}(\text{kq}).\]
Combining this with the change of rings isomorphism \cite[Appendix A]{Rav86} coming from the $\euscr{A}^\vee$-comodule algebra isomorphism 
\[
\text{H}_{*,*}(\text{kq}) \cong (\euscr{A} \modmod \euscr{A}(1))^\vee
\]
and the Brown--Gitler decomposition of \cref{kq brown gitler homology}, we obtain the following.
\begin{thm}[{\cite[Theorem 4.1]{Realkqcoop}}]
\label{thm:e2 mass kq coop}
    The $\textup{E}_2$-page of the $\textup{\textbf{mASS}}^{\mathbb{F}_q}(\textup{kq} \otimes \textup{kq})$ is given by
    \[\textup{E}_2^{s,f,w} = \bigoplus_{k \geq 0}\textup{Ext}^{s,f,w}_{\euscr{A}(1)^\vee}(\Sigma^{4k, 2k}B_0^{\mathbb{F}_q}(k)).\]
\end{thm}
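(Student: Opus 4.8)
The plan is to recognize the $E_2$-page via its signature as $\text{Ext}_{\euscr{A}^\vee}(\text{H}_{**}(\text{kq}\otimes\text{kq}))$, reduce this $\euscr{A}^\vee$-computation to one over $\euscr{A}(1)^\vee$ by a change-of-rings argument, and then split the answer along the Brown--Gitler decomposition of \Cref{kq brown gitler homology}. The whole argument is algebraic and should run in parallel to the $\mathbb{R}$-motivic case \cite{Realkqcoop}; the content is in carefully arranging the comodule-theoretic inputs rather than in any new calculation.

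First I would assemble the homological input. The collapse of the K\"unneth spectral sequence (already cited) gives an isomorphism of $\euscr{A}^\vee$-comodule algebras
\[\text{H}_{**}(\text{kq}\otimes\text{kq}) \cong \text{H}_{**}(\text{kq})\otimes_{\mathbb{M}_2^{\mathbb{F}_q}}\text{H}_{**}(\text{kq})\]
with the diagonal coaction. Using the identification $\text{H}_{**}(\text{kq}) \cong (\euscr{A}\modmod\euscr{A}(1))^\vee$ for one of the two tensor factors, this expresses the homology as $(\euscr{A}\modmod\euscr{A}(1))^\vee\otimes_{\mathbb{M}_2^{\mathbb{F}_q}}\text{H}_{**}(\text{kq})$.

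The key step is change of rings. Since $\euscr{A}(1)^\vee$ is a quotient Hopf algebroid of $\euscr{A}^\vee$, we have $(\euscr{A}\modmod\euscr{A}(1))^\vee \cong \euscr{A}^\vee\square_{\euscr{A}(1)^\vee}\mathbb{M}_2^{\mathbb{F}_q}$. The shearing (untwisting) isomorphism then rewrites the diagonal tensor product as an extended comodule,
\[(\euscr{A}^\vee\square_{\euscr{A}(1)^\vee}\mathbb{M}_2^{\mathbb{F}_q})\otimes_{\mathbb{M}_2^{\mathbb{F}_q}}\text{H}_{**}(\text{kq}) \cong \euscr{A}^\vee\square_{\euscr{A}(1)^\vee}\text{H}_{**}(\text{kq}),\]
where on the right $\text{H}_{**}(\text{kq})$ now carries its restricted $\euscr{A}(1)^\vee$-comodule structure. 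Ravenel's change-of-rings isomorphism \cite[Appendix A]{Rav86} then yields $\text{Ext}_{\euscr{A}^\vee}(\euscr{A}^\vee\square_{\euscr{A}(1)^\vee}\text{H}_{**}(\text{kq})) \cong \text{Ext}_{\euscr{A}(1)^\vee}(\text{H}_{**}(\text{kq}))$. It remains to apply the $\euscr{A}(1)^\vee$-comodule splitting of \Cref{kq brown gitler homology} together with the fact that $\text{Ext}$ commutes with direct sums in the coefficient variable, giving
\[\text{Ext}_{\euscr{A}(1)^\vee}\big((\euscr{A}\modmod\euscr{A}(1))^\vee\big)\cong\bigoplus_{k\geq0}\text{Ext}^{s,f,w}_{\euscr{A}(1)^\vee}(\Sigma^{4k,2k}B_0^{\mathbb{F}_q}(k)),\]
which is the claimed description of the $E_2$-page.

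The main obstacle I anticipate is the bookkeeping around the Hopf algebroid structure. When $q\equiv3\,(4)$ the Bockstein action is nontrivial, so $(\mathbb{M}_2^{\mathbb{F}_q},\euscr{A}^\vee)$ is a genuine Hopf algebroid rather than a Hopf algebra, and one must verify that both the shearing isomorphism and the change-of-rings isomorphism are invoked in their Hopf-algebroid forms, and that the comodule-algebra structures are compatible with the grading shifts $\Sigma^{4k,2k}$. Everything else—the K\"unneth collapse, the identification of $\text{H}_{**}(\text{kq})$, and the Brown--Gitler splitting—is imported as a black box, so once the change-of-rings step is justified over the Hopf algebroid the conclusion follows formally.
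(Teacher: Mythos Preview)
Your proposal is correct and follows essentially the same route as the paper: K\"unneth collapse, then change of rings along $\text{H}_{**}(\text{kq})\cong(\euscr{A}\modmod\euscr{A}(1))^\vee$, then the Brown--Gitler decomposition of \Cref{kq brown gitler homology}. Your explicit mention of the shearing isomorphism and the Hopf-algebroid caveat for $q\equiv 3\,(4)$ is more careful than the paper's one-line sketch, but the argument is the same.
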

This decomposition, combined with the short exact sequences of \cref{prop:ses bg}, allows us to compute the $\mathrm{E}_2$-page inductively. For $k=0$, we have that $B_0^{\mathbb{F}_q}(0) =  \mathbb{M}_2^{\mathbb{F}_q},$ and so the relevant summand of the $\mathrm{E}_2$-page is $\text{Ext}_{\euscr{A}(1)^\vee}^{*,*,*}(\mathbb{M}_2^{\mathbb{F}_q}).$ We will review these Ext groups in the coming sections. 

For $k=1$, we have that $B_0^{\mathbb{F}_q}(1) \cong \mathbb{M}_2^{\mathbb{F}_q}\{1, \overline{\xi}_1, \overline{\tau}_1\}$. The relevant summand of the $\mathrm{E}_2$-page may be calculated using an \textit{algebraic Atiyah--Hirzebruch spectral sequence}, denoted $\textbf{aAHSS}(B_0^{\mathbb{F}_q}(1))$. Filtering $B_0^{\mathbb{F}_q}(1)$ by topological degree and letting $\text{F}_iB_0^{\mathbb{F}_q}(1)$ denote the subspace spanned by generators of degree $\leq i$ gives a finite filtration of the form
\[\begin{tikzcd}
	0 & {\text{F}_0B_0^{\mathbb{F}_q}(1)} & {\text{F}_1B_0^{\mathbb{F}_q}(1)} & {\text{F}_2B_0^{\mathbb{F}_q}(1)} & {\text{F}_3B_0^{\mathbb{F}_q}(1) = B_0^F(1)} \\
	& {\mathbb{M}_2^{\mathbb{F}_q}\{[1]\}} & 0 & {\mathbb{M}_2^{\mathbb{F}_q}\{[\overline{\xi}_1]\}} & {\mathbb{M}_2^{\mathbb{F}_q}\{[\overline{\tau}_1]\}}
	\arrow[from=1-1, to=1-2]
	\arrow[from=1-2, to=1-3]
	\arrow[from=1-2, to=2-2]
	\arrow[from=1-3, to=1-4]
	\arrow[from=1-3, to=2-3]
	\arrow[from=1-4, to=1-5]
	\arrow[from=1-4, to=2-4]
	\arrow[from=1-5, to=2-5]
\end{tikzcd}\]
The $\textbf{aAHSS}(B_0^{\mathbb{F}_q}(1))$ comes from applying the functor $\text{Ext}^{*,*,*}_{\euscr{A}(1)^\vee}(-)$ to this filtration. This spectral sequence has signature
\[\mathrm{E}_1^{s,f,w,a} = \text{Ext}^{s,f,w}_{\euscr{A}(1)^\vee}(\mathbb{M}_2^{\mathbb{F}_q}) \otimes \mathbb{M}^{\mathbb{F}_q}_2\{[1], [\overline{\xi}_1], [\overline{\tau}_1]\}\implies \text{Ext}^{s,f,w}_{\euscr{A}(1)^\vee}(B_0^{\mathbb{F}_q}(1)),\]
where $a$ denotes the Atiyah--Hirzebruch filtration degree.
We let $\alpha[i]$ denote the copy of the class $\alpha \in \text{Ext}^{*,*,*}_{\euscr{A}(1)^\vee}(\mathbb{M}_2^{\mathbb{F}_q})$ in Atiyah--Hirzebruch filtration $i$. The differentials in the $\textbf{aAHSS}(B_0^{\mathbb{F}_q}(1))$ are of the form
\[d_r:\mathrm{E}_r^{s,f,w,a} \to \mathrm{E}_r^{s-1, f+1,w, a-r}.\]
Thus the $d_r$-differential lowers Atiyah--Hirzebruch filtration by $r$. In particular, the structure of $B_0^{\mathbb{F}_q}(1)$ ensures that each differential only takes nonzero value on one Atiyah--Hirzebruch filtration piece. By inspection of the length of the cellular filtration on $B_0^{\mathbb{F}_q}(1)$, we have the following.
\begin{proposition}
\label{aAHSS convergenece general}
    In the $\textup{\textbf{aAHSS}}(B_0^{\mathbb{F}_q}(1))$, we have that $\textup{E}_4=\textup{E}_\infty$.
\end{proposition}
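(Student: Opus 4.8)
The plan is to argue purely from the boundedness of the Atiyah--Hirzebruch filtration, so that no actual differentials need be computed. First I would record where the generators live: since $B_0^{\mathbb{F}_q}(1) \cong \mathbb{M}_2^{\mathbb{F}_q}\{1, \overline{\xi}_1, \overline{\tau}_1\}$ with $|\overline{\xi}_1| = (2,1)$ and $|\overline{\tau}_1| = (3,1)$, the classes $[1]$, $[\overline{\xi}_1]$, $[\overline{\tau}_1]$ sit in topological degrees $0$, $2$, and $3$. Hence the associated graded of the filtration on $B_0^{\mathbb{F}_q}(1)$ is concentrated in Atiyah--Hirzebruch filtrations $a \in \{0, 2, 3\}$, with the $a = 1$ stage vanishing, and the maximal filtration in which any class of the $E_1$-page lives is $a = 3$.

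Next I would invoke the shape of the differential. Since $d_r$ has the form $d_r : E_r^{s,f,w,a} \to E_r^{s-1, f+1, w, a-r}$, it lowers the Atiyah--Hirzebruch degree by exactly $r$. Consequently, for any $r \geq 4$, a nonzero $d_r$ would require a source in filtration $a \geq r \geq 4$ mapping to a target in filtration $a - r \geq 0$; but no classes exist above $a = 3$, so every such differential has either a trivial source or a trivial target. Therefore $d_r = 0$ for all $r \geq 4$, no further differentials are possible after the $E_4$-page, and we conclude $E_4 = E_\infty$.

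There is essentially no obstacle here: the statement is a degree-counting consequence of the fact that $B_0^{\mathbb{F}_q}(1)$ has only three cells, in degrees $0$, $2$, and $3$, so the filtration has length $3$ and cannot support differentials longer than $d_3$. The only inputs one must verify, both already recorded in the setup of the $\textbf{aAHSS}(B_0^{\mathbb{F}_q}(1))$, are that the top filtration is $a = 3$ and that $d_r$ drops $a$ by $r$.
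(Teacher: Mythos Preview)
Your proof is correct and is precisely the argument the paper intends: the paper simply states that the result follows ``by inspection of the length of the cellular filtration on $B_0^{\mathbb{F}_q}(1)$,'' and your write-up fills in exactly that inspection. The key points---that the filtration is concentrated in $a \in \{0,2,3\}$ and that $d_r$ lowers $a$ by $r$---are the same inputs the paper relies on.
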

The differentials in the $\textbf{aAHSS}(B_0^{\mathbb{F}_q}(1))$ are determined by the cobar complex; see \cite[Section 4.2]{Realkqcoop}. We recall the behavior of the $d_1$ and $d_2$-differentials.
\begin{proposition}[{\cite[Proposition 4.2, Proposition 4.3]{Realkqcoop}}]
\label{aAHSS d1}
    In the $\textup{\textbf{aAHSS}}(B_0^{\mathbb{F}_q}(1))$, the $d_1$-differential is determined by
    \[d_1(\alpha[3]) = h_0\alpha[2],\]
    and the $d_2$-differential is determined by
    \[d_2(\alpha[2]) = h_1\alpha[0].\]
\end{proposition}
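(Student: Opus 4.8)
The plan is to read off both differentials directly from the reduced $\euscr{A}(1)^\vee$-coaction on the generators of $B_0^{\mathbb{F}_q}(1) = \mathbb{M}_2^{\mathbb{F}_q}\{1, \overline{\xi}_1, \overline{\tau}_1\}$, following the cobar computation of \cite[Section 4.2]{Realkqcoop}. Recall that the $\textbf{aAHSS}(B_0^{\mathbb{F}_q}(1))$ is the spectral sequence of the cobar complex of $B_0^{\mathbb{F}_q}(1)$ filtered by the Atiyah--Hirzebruch degree, i.e.\ by the topological degree of the $B_0^{\mathbb{F}_q}(1)$-tensor factor. The internal coproducts on the $\overline{\euscr{A}}(1)^\vee$-factors preserve this degree, so the only part of the cobar differential that lowers it is the reduced coaction $\overline{\psi}(m) = \psi(m) - 1\otimes m$ of a basis element $m$. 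Concretely, if $\overline{\psi}(m) = \sum_j \gamma_j \otimes m_j$ with $m_j$ in Atiyah--Hirzebruch degree $\deg(m) - r_j$, then the summand $\gamma_j\otimes m_j$ contributes a $d_{r_j}$ sending the class of $m$ to $[\gamma_j]$ times the class of $m_j$. It therefore suffices to compute $\overline{\psi}$ on $\overline{\xi}_1$ (degree $2$) and $\overline{\tau}_1$ (degree $3$).

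First I would compute the coaction. Since $\text{H}_{**}(\text{H}\mathbb{Z}) = (\euscr{A}\modmod\euscr{A}(0))^\vee$ is a left $\euscr{A}^\vee$-comodule, its coaction necessarily lands in $\euscr{A}^\vee \otimes (\euscr{A}\modmod\euscr{A}(0))^\vee$, so that $\overline{\tau}_0$ (which is not an element of $(\euscr{A}\modmod\euscr{A}(0))^\vee$) can only appear in the left tensor factor. Restricting to the subcomodule $B_0^{\mathbb{F}_q}(1)$ and projecting the left factor onto $\euscr{A}(1)^\vee$, the comultiplication of $\euscr{A}^\vee$ yields
\[\overline{\psi}(\overline{\xi}_1) = \overline{\xi}_1 \otimes 1, \qquad \overline{\psi}(\overline{\tau}_1) = \overline{\tau}_0 \otimes \overline{\xi}_1 + \overline{\tau}_1 \otimes 1.\]
Reading off the filtration drops: the term $\overline{\tau}_0 \otimes \overline{\xi}_1$ lowers Atiyah--Hirzebruch degree from $3$ to $2$, producing $d_1(1[3]) = [\overline{\tau}_0]\,1[2] = h_0\,1[2]$; and since $\overline{\psi}(\overline{\xi}_1)$ consists of the single term $\overline{\xi}_1 \otimes 1$, which drops degree from $2$ to $0$, the class $1[2]$ is automatically a $d_1$-cycle and supports $d_2(1[2]) = [\overline{\xi}_1]\,1[0] = h_1\,1[0]$. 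A tridegree check (using $|\overline{\tau}_0|=(1,0)$, $|\overline{\xi}_1|=(2,1)$ and the convention $d_r\colon E_r^{s,f,w,a}\to E_r^{s-1,f+1,w,a-r}$) confirms these land correctly. The remaining term $\overline{\tau}_1\otimes 1$ of $\overline{\psi}(\overline{\tau}_1)$ would only contribute a $d_3$, which is outside the scope of this proposition and anyway concerns classes already supporting the $d_1$ above.

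Finally, because the $\textbf{aAHSS}$ is a spectral sequence of $\text{Ext}_{\euscr{A}(1)^\vee}(\mathbb{M}_2^{\mathbb{F}_q})$-modules and each $\alpha[i]$ equals $\alpha\cdot 1[i]$, the general formulas $d_1(\alpha[3]) = h_0\alpha[2]$ and $d_2(\alpha[2]) = h_1\alpha[0]$ follow from the $\alpha = 1$ cases by linearity. I would stress that the coproducts of $\overline{\xi}_1$ and $\overline{\tau}_1$ involve no coefficients from $\mathbb{M}_2^{\mathbb{F}_q}$ and are in particular independent of whether $q \equiv 1$ or $3 \,(4)$: the relations $T$ distinguishing the two cases first affect $\overline{\tau}_1^2$, which lives in higher Brown--Gitler degree, so the same computation proves the statement uniformly. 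The main obstacle is pinning down the coaction correctly through the Hopf algebroid structure---namely verifying that the $\overline{\tau}_0$ produced by $\overline{\psi}(\overline{\tau}_1)$ is forced into the left-hand cobar factor (hence yields $h_0$ rather than being annihilated) and that the nontrivial Bockstein introduces no additional terms into this low-degree coaction.
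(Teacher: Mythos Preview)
Your proof is correct and follows precisely the cobar-complex approach that the paper defers to \cite[Section 4.2]{Realkqcoop}: you compute the reduced $\euscr{A}(1)^\vee$-coaction on $\overline{\xi}_1$ and $\overline{\tau}_1$, read off the Atiyah--Hirzebruch drops, and extend by $\text{Ext}_{\euscr{A}(1)^\vee}(\mathbb{M}_2^{\mathbb{F}_q})$-linearity. The paper does not supply its own proof here, so there is nothing further to compare.
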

Since the differentials in the $\textbf{aAHSS}(B^{\mathbb{F}_q}_0(1))$ are linear over $\text{Ext}_{\euscr{A}(1)^\vee}^{*,*,*}(\mathbb{M}_2^{\mathbb{F}_q})$, these formulae completely determine the $d_1$ and $d_2$-differentials. By inspection, the only possible nonzero $d_3$-differential is between Atiyah--Hirzebruch filtrations 3 and 0, and will be given by the Massey product
\[d_3(\alpha[3]) = \langle\alpha, h_0, h_1 \rangle[0].\]
We will see that the nontriviality of this differential depends on whether or not $q \equiv 1 \, (4)$ or $q \equiv 3 \, (4)$.

With the computation of $\text{Ext}_{\euscr{A}(1)^\vee}^{*,*,*}(B_0^{\mathbb{F}_q}(1))$, we may proceed to calculate the rest of the $\mathrm{E}_2$-page of the $\textbf{mASS}^{\mathbb{F}_q}(\text{kq} \otimes \text{kq})$ by using the short exact sequences of \cref{prop:ses bg}. Since we are primarily interested in using the kq-resolution to determine the $v_1$-periodic elements in $\pi_{*,*}^{\mathbb{F}_q}(\mathbb{S})$, we will compute the $\mathrm{E}_2$-page modulo $v_1$-torsion, which makes the presentation of our computations significantly more concise.

Following these calculations, we will be prepared to compute the ring of cooperations. First, using \cref{eq:MotivicCohomology}, we will compute the $\textbf{mASS}^{\mathbb{F}_q}(\text{H}\mathbb{Z})$. Using the quotient map $\text{kq} \to \text{H}\mathbb{Z}$, we will lift the differentials to the $\textbf{mASS}^{\mathbb{F}_q}(\text{kq})$, where we are aided by Friedlander's calculations of the hermitian K-theory of finite fields. Then, using the kq-module structure on $\text{kq} \otimes \text{kq}$, we will be able to determine the differentials in the $\textbf{mASS}^{\mathbb{F}_q}(\text{kq} \otimes \text{kq}).$

\begin{remark}
    It is worthwhile to note that, unlike in the classical, $\mathbb{C}$-motivic, and $\mathbb{R}$-motivic cases, the $\textbf{mASS}^{\mathbb{F}_q}(\text{H}\mathbb{Z})$ has room for potential differentials. In fact, by the isomorphism
    \[\pi_{s,w}^{\mathbb{F}_q}(\text{H}\mathbb{Z}) \cong \text{H}^{-s, -w}(\mathbb{F}_q; \mathbb{Z})\]
    and the content of \cref{eq:MotivicCohomology}, we will see that there must be infinitely many differentials. Lifting along the quotient map $\text{kq} \to \text{H}\mathbb{Z}$ shows that there are infinitely many differentials in the $\textbf{mASS}^{\mathbb{F}_q}(\text{kq})$, and these also persist to the $\textbf{mASS}^{\mathbb{F}_q}(\text{kq} \otimes \text{kq}).$
\end{remark}

\section{Trivial Bockstein action}
\label{section:trivial}
In this section, we compute the ring of cooperations $\pi_{*,*}^{\mathbb{F}_q}(\text{kq} \otimes \text{kq})$ in the case where there is a trivial Bockstein action on $\mathbb{F}_q$, that is, when $q \equiv 1 \, (4)$. We first describe the $\textbf{mASS}^{\mathbb{F}_q}(\text{H}\mathbb{Z})$, then show that the differentials lift to the $\textbf{mASS}^{\mathbb{F}_q}(\text{kq})$, completely determining the behavior of the spectral sequence. After calculating the structure of the $\mathrm{E}_2$-page of the $\textbf{mASS}^{\mathbb{F}_q}(\text{kq} \otimes \text{kq})$ as a module over the $\mathrm{E}_2$-page of the $\textbf{mASS}^{\mathbb{F}_q}(\text{kq})$, we are able to lift differentials again, determining the spectral sequence and the ring of cooperations. 

\subsection{Integral homology}
We begin by computing the $\textbf{mASS}^{\mathbb{F}_q}(\text{H}\mathbb{Z})$. This spectral sequence has signature
\[
\mathrm{E}_2^{s,f,w} = \text{Ext}^{s,f,w}_{\euscr{A}^\vee}(\text{H}_{*,*}(\text{H}\mathbb{Z})) \implies \pi_{s,w}^{\mathbb{F}_q}(\text{H}\mathbb{Z}).
\]
Recall that there is an isomorphism of $\euscr{A}^\vee$-comodule algebras $\text{H}_{*,*}(\text{H}\mathbb{Z}) \cong (\euscr{A} \modmod \euscr{A}(0))^\vee.$
This allows us to rewrite the $\mathrm{E}_2$-page of the $\textbf{mASS}^{\mathbb{F}_q}(\mathrm{H}\mathbb{Z})$ using a change of rings isomorphism as
\[\text{Ext}_{\euscr{A}^\vee}^{s,f,w}((\euscr{A} \modmod \euscr{A}(0))^\vee) \cong \text{Ext}^{s,f,w}_{\euscr{A}(0)^\vee}(\mathbb{M}_2^{\mathbb{F}_q}).\]
This Ext group was calculated by Kylling to be \cite[Theorem 4.1.2]{Kylingkqfinite}
\[\text{Ext}^{*,*,*}_{\euscr{A}(0)^\vee}(\mathbb{M}_2^{\mathbb{F}_q}) \cong \mathbb{F}_2[u,\tau, h_0]/(u^2),\]
where $|h_0| = (1,0,0).$ 

We depict this $\mathrm{E}_2$-page in \Cref{f5_Ext_A0}. Note that all charts are depicted in $(s,f)$-grading, and that motivic weight is suppressed.

\begin{figure}
    \centering    
    \includegraphics[scale=.75]{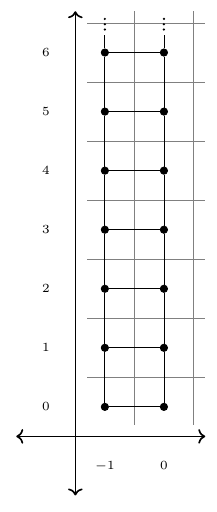}
    \caption{$\text{Ext}^{*,*,*}_{\euscr{A}(0)^\vee}(\mathbb{M}_2^{\mathbb{F}_q})$ for $q \equiv 1 \, (4)$. 
    }
    \label{f5_Ext_A0}
\end{figure}

We now determine all of the differentials in the spectral sequence.
\begin{lemma}[{\cite[Lemma 4.2.1]{Kylingkqfinite}}] 
\label{F5 difs mass HZ}
    Let $q \equiv 1 \, (4)$. The differentials in the $\textup{\textbf{mASS}}^{\mathbb{F}_q}(\textup{H}\mathbb{Z})$ are determined by
    \[d_{\nu_2(q-1)+\nu_2(i)}(\tau^i) = u\tau^{i-1}h_0^{\nu_2(q-1)+\nu_2(i)}.\]
\end{lemma}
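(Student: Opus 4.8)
The plan is to read off every differential from convergence to the known homotopy groups $\pi_{**}^{\mathbb{F}_q}(\HZ)$ recorded in \Cref{eq:MotivicCohomology}, exploiting the extreme sparsity of the $E_2$-page. Since $u^2 = 0$, the algebra $\mathbb{F}_2[u,\tau,h_0]/(u^2)$ is concentrated in stems $s=0$ and $s=-1$, so I would first reorganize it by weight. In weight $-m$ with $m \geq 1$ the $E_2$-page is exactly two infinite $h_0$-towers, one on $\tau^m$ in stem $0$ and one on $u\tau^{m-1}$ in stem $-1$; weight $0$ contributes only the tower on $\tau^0 = 1$ in stem $0$, and all positive weights are empty. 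Because Adams differentials preserve weight, the spectral sequence splits into these weight-graded pieces, each joining at most two towers across adjacent stems.

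The structural inputs are that $h_0$ is a permanent cycle (it detects multiplication by $2$) and that $d_r$ therefore commutes with $h_0$-multiplication. First I would note that the weight-$0$ tower $\{h_0^c\}_{c \geq 0}$ must survive to $E_\infty$ to produce $\pi_{0,0}^{\mathbb{F}_q}(\HZ) \cong \mathbb{Z}_2$, so $1 = \tau^0$ is a permanent cycle. For $i \geq 1$, the class $\tau^i$ lies in stem $0$, weight $-i$, and filtration $0$: it can never be a target of a differential, yet $\pi_{0,-i}^{\mathbb{F}_q}(\HZ) = 0$ forces it to die, so it must support one. The only available target is the stem-$(-1)$ tower in the same weight, and since each filtration there is one-dimensional, there is a first nonzero differential $d_r(\tau^i) = u\tau^{i-1}h_0^r$ for a unique $r \geq 1$. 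By $h_0$-linearity this kills the entire stem-$0$ tower and truncates the stem-$(-1)$ tower to the classes $u\tau^{i-1}h_0^0, \dots, u\tau^{i-1}h_0^{r-1}$; because stems $\leq -2$ are empty there is no further room for differentials, so $E_{r+1} = E_\infty$ in this weight. (As a consistency check, the Leibniz rule applied to $d(\tau) = uh_0^{\nu_2(q-1)}$ already yields the odd-$i$ case directly, while for even $i$ one has $d(\tau^i) = i\tau^{i-1}d(\tau) = 0$, so those powers genuinely require the convergence input.)

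It remains to pin down $r$. Convergence identifies the surviving length-$r$ tower with the associated graded of $\pi_{-1,-i}^{\mathbb{F}_q}(\HZ) = \mathbb{Z}/(q^i-1)_2$ for the Adams ($=2$-adic) filtration, with $h_0$ detecting multiplication by $2$; hence $r = \nu_2(q^i-1)$. Finally I would invoke the lifting-the-exponent lemma: since $q \equiv 1 \, (4)$ we have $\nu_2(q+1) = 1$, and LTE gives $\nu_2(q^i-1) = \nu_2(q-1) + \nu_2(i)$ for every $i \geq 1$ (treating $i$ odd and $i$ even separately). This produces the asserted $d_{\nu_2(q-1)+\nu_2(i)}(\tau^i) = u\tau^{i-1}h_0^{\nu_2(q-1)+\nu_2(i)}$, and $h_0$-linearity extends it to the whole $E_2$-page.

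The argument is almost entirely forced once the abutment is known, so there is no isolated hard computation; the point requiring care is the bookkeeping guaranteeing that each $\tau^i$ supports exactly one differential, of the asserted length and no shorter. Concretely, one must rule out a premature $d_{r'}(\tau^i) = u\tau^{i-1}h_0^{r'}$ with $r' < r$, but such a differential would truncate the stem-$(-1)$ tower too early and leave an $E_\infty$-group of order $2^{r'} < |\mathbb{Z}/(q^i-1)_2|$, contradicting \Cref{eq:MotivicCohomology}. This is precisely where Soulé's computation does the work, and the only genuinely external ingredient is the elementary $2$-adic valuation supplied by lifting-the-exponent.
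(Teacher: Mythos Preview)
Your proposal is correct and follows essentially the same approach as the paper: both arguments observe that the $E_2$-page is concentrated in stems $0$ and $-1$, that $1$ and $h_0$ are permanent cycles, and that the length of the surviving $h_0$-tower in stem $-1$, weight $-i$ is forced by the known value $\pi_{-1,-i}^{\mathbb{F}_q}(\text{H}\mathbb{Z}) = \mathbb{Z}/(q^i-1)_2$. Your writeup is in fact more explicit than the paper's, which simply says the values are ``a simple consequence'' of Soul\'e's computation; you spell out the lifting-the-exponent identity $\nu_2(q^i-1) = \nu_2(q-1) + \nu_2(i)$ for $q \equiv 1 \, (4)$ that the paper leaves implicit.
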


\begin{proof}
    Since $\pi_{0,0}^{\mathbb{F}_q}(\text{H}\mathbb{Z}) \cong \mathbb{Z}_2$, any element in $\mathrm{E}_2^{0,f,0}$ must be a permanent cycle for all $f \geq 0.$ This implies that $1$ and all powers of $h_0$ are permanent cycles. Thus, if a class $x$ supports a differential, then $x$ must be divisible by $\tau$. Suppose that $y$ is not divisible by $\tau$ and that $x=\tau^ky$ for some $k \geq 1$. Then by the Liebniz rule, for any $r \geq 0$ we have
    \[d_r(x) = d_r(y)\tau^k + yd_r(\tau^k) = yd_r(\tau^k).\]
    Thus all differentials are uniquely determined by their value on $\tau$. Moreover, $\pi_{0, -n}^{\mathbb{F}_q}(\text{H}\mathbb{Z})=0$ for any $n \geq 1$, so there can no $\tau$-divisibility in stem $0$ on the $\mathrm{E}_\infty$-page. The particular values for the differentials are then a simple consequences of \eqref{eq:MotivicCohomology} combined with the fact that Adams differentials preserve motivic weight. 
\end{proof}

To illustrate the behavior of the differentials in the $\textbf{mASS}^{\mathbb{F}_q}(\text{H}\mathbb{Z})$, we give an example.

\begin{example}
\label{ex:f5-mASS-HZ}
    Let $q=5$. We have that $\pi_{-1, -1}^{\mathbb{F}_5}(\text{H}\mathbb{Z}) \cong \mathbb{Z}/4$, which implies that the class $u h_0^2$ is the class in stem $-1$ and weight $-1$ of lowest Adams filtration and greatest motivic weight which must be the target of a differential. Since $\tau$ must die, this forces the differential $d_{2}(\tau) = u  h_0^2$. The Liebniz rule then determines all $d_2$-differentials on odd powers of $\tau$. Similarly, we have $\pi_{-1, -2}^{\mathbb{F}_5}(\text{H}\mathbb{Z}) \cong \mathbb{Z}/8$, so we must have that $d_{3}(\tau^2) = u\tau h_0^3$, and the Liebniz rule determines all $d_3$-differentials on $\tau^{2n}$ for $n$ odd.
    
    We depict the $\mathrm{E}_\infty$-page in the case of $q=5$ in \Cref{fig:f5-Einfty-HZ}. An empty $\circ$ denotes $\mathbb{F}_2$. A class labeled with $n$ indicates the first nonzero power of $\tau$ which exists in that particular bidegree $(s,f)$. An empty class with $k$ circles around it indicates $\tau^{2^k}$-periodicity. For example, in bidegree $(-1, 2)$ we have $\mathbb{F}_2\{u\tau h_0^2\}[\tau^2],$ and in bidegree $(-1, 3)$ we have $\mathbb{F}_2\{u\tau^2h_0^3\}[\tau^4].$
\end{example}

\begin{figure}
    \centering
    \includegraphics[scale=.75]{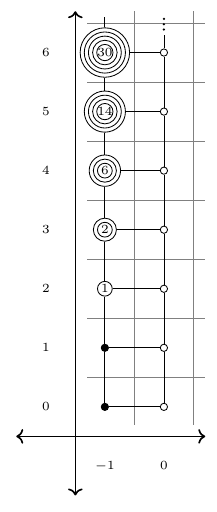}
    \caption{The $\mathrm{E}_\infty$-page of the $\textbf{mASS}^{\mathbb{F}_q}(\text{H}\mathbb{Z})$ for $q=5$. 
    }
    \label{fig:f5-Einfty-HZ}
\end{figure}
\subsection{Hermitian K-theory}
We now compute the $\textbf{mASS}^{\mathbb{F}_q}(\text{kq}).$ This spectral sequence has signature
\[\mathrm{E}^{s,f,w}_2 = \text{Ext}^{s,f,w}_{\euscr{A}^\vee}(\text{H}_{*,*}(\text{kq})) \implies \pi_{s,w}^{\mathbb{F}_q}(\text{kq}).\]
Recall that there is an isomorphism of $\euscr{A}^\vee$-comodule algebras $\text{H}_{*,*}(\text{kq}) \cong (\euscr{A} \modmod \euscr{A}(1))^\vee.$
This allows us to rewrite the $\mathrm{E}_2$-page of the $\textbf{mASS}^{\mathbb{F}_q}(\mathrm{kq})$ using a change of rings isomorphism as
\[\text{Ext}_{\euscr{A}^\vee}^{s,f,w}((\euscr{A}\modmod \euscr{A}(1))^\vee) \cong \text{Ext}_{\euscr{A}(1)^\vee}^{s,f,w}(\mathbb{M}_2^{\mathbb{F}_q}).\]
This Ext group was calculated by Kylling to be \cite[Theorem 4.1.2]{Kylingkqfinite}:
\begin{equation}
\label{Ext_A(1)_m2_easy}
\text{Ext}^{*,*,*}_{\euscr{A}(1)^\vee}(\mathbb{M}_2^{\mathbb{F}_q}) = 
\frac{\mathbb{F}_2[u, \tau,  h_0, h_1, a, b]}{(u^2,h_0h_1, \tau h_1^3, h_1 a, a^2=h_0^2b)},
\end{equation}
where $|h_0| = (1,0,0)$, $|h_1|=(1,1,1)$, $|a| = (4,3, 2)$, and $|b| = (8,4,4)$. We note that this bears similarity to the $\mathbb{C}$-motivic analogue \cite{Hil11}:
\[
\text{Ext}^{*,*,*}_{\euscr{A}(1)^\vee}(\mathbb{M}_2^{\mathbb{C}}) = 
\frac{\mathbb{F}_2[ \tau,  h_0, h_1, a, b]}{(h_0h_1, \tau h_1^3, h_1 a, a^2=h_0^2b)},
\]
with generators in the same tridegree as in \cref{Ext_A(1)_m2_easy}. Indeed, there is an abstract isomorphism
\begin{equation}
\label{eq:ExtA1F5}
\text{Ext}^{*,*,*}_{\euscr{A}(1)^\vee}(\mathbb{M}_2^{\mathbb{F}_q}) \cong \text{Ext}_{\euscr{A}(1)^\vee}^{*,*,*}(\mathbb{M}_2^\mathbb{C})\otimes \mathbb{M}_2^{\mathbb{F}_q}.
\end{equation}
We depict this Ext group in \Cref{F5_Ext_A1}. 

\begin{figure}[h]
    \centering
    \includegraphics[scale=.75]{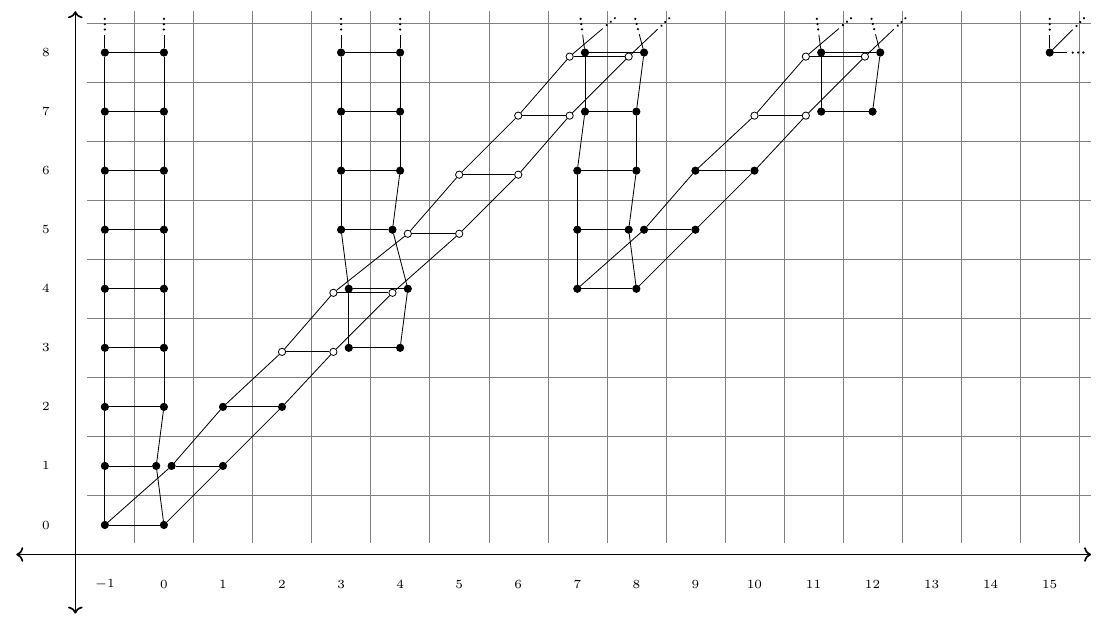}
    \caption{$\text{Ext}^{*,*,*}_{\euscr{A}(1)^\vee}(\mathbb{M}_2^{\mathbb{F}_q})$ for $q \equiv 1 \, (4)$.
    A $\bullet$ denotes $\mathbb{F}_2[ \tau]$, and a $\circ$ denotes $\mathbb{F}_2$. A vertical line represents $h_0$-multiplication, a line of slope 1 represents $h_1$-multiplication, and a vertical line represents $u$-multiplication.}
    \label{F5_Ext_A1}
\end{figure}

We will now show that we can lift the differentials from the $\textbf{mASS}^{\mathbb{F}_q}(\text{H}\mathbb{Z})$ to the $\textbf{mASS}^{\mathbb{F}_q}(\text{kq})$; see also \cite[Section 4.2]{Kylingkqfinite}.

\begin{proposition}
\label{prop:EasyHZDifsLiftTokq}
    Let $q \equiv 1 \, (4)$. The differentials in the $\textup{\textbf{mASS}}^{\mathbb{F}_q}(\textup{kq})$ are determined by the differentials in the $\textup{\textbf{mASS}}^{\mathbb{F}_q}(\textup{H}\mathbb{Z}).$   
\end{proposition}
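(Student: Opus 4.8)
The plan is to exploit the map of motivic Adams spectral sequences induced by the quotient $\text{kq} \to \text{H}\mathbb{Z}$ of \Cref{eq:kqToHZ}. On $E_2$-pages this is the change-of-rings map
\[
\phi \colon \text{Ext}_{\euscr{A}(1)^\vee}(\mathbb{M}_2^{\mathbb{F}_q}) \longrightarrow \text{Ext}_{\euscr{A}(0)^\vee}(\mathbb{M}_2^{\mathbb{F}_q})
\]
induced by the inclusion $\euscr{A}(0) \hookrightarrow \euscr{A}(1)$; it carries $u, \tau, h_0$ to the generators of the same name and sends $h_1, a, b$ to zero, so it is surjective with kernel the ideal $(h_1, a, b)$. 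Since $\phi$ is induced by a map of spectra, it commutes with all Adams differentials by naturality. The whole argument then rests on two bidegree bookkeeping computations in \Cref{Ext_A(1)_m2_easy} together with this naturality.

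First I would show that $u, h_0, h_1, a,$ and $b$ are permanent cycles. Each of these is a purely formal consequence of \Cref{Ext_A(1)_m2_easy}: for a generator $x$ one checks that the tridegree of a hypothetical target $d_r(x)$ supports no nonzero class. For $u, h_0,$ and $h_1$ the relevant groups are empty for degree reasons alone, since comparing the stem and the weight of a putative target forces a negative exponent on one of the generators. For $a$ and $b$ the surviving candidate monomials are all annihilated by the relations $\tau h_1^3 = 0$ and $h_1 a = 0$ recorded in \Cref{Ext_A(1)_m2_easy}. Granting this, the Leibniz rule shows that every differential is $\mathbb{F}_2[u, h_0, h_1, a, b]$-linear, so each $d_r$ is determined by its values on the powers $\tau^i$, the only remaining polynomial generator.

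Next I would pin down the differentials on $\tau^i$ using naturality. The key bidegree computation is that the target group of $d_{r'}(\tau^i)$, in stem $-1$, filtration $r'$, and weight $-i$, is either zero or the one-dimensional space spanned by $u\tau^{i-1}h_0^{r'}$; in particular it meets the kernel ideal $(h_1, a, b)$ trivially, so $\phi$ restricts to an injection on it. Hence $d_{r'}^{\text{kq}}(\tau^i)$ is detected faithfully by $\phi$ and must equal the lift of $d_{r'}^{\text{H}\mathbb{Z}}(\tau^i)$. Noting that nothing can map into $\tau^i$ for filtration reasons, $\tau^i$ survives to each relevant page, and \Cref{F5 difs mass HZ} then forces $d_{r'}(\tau^i) = 0$ for $r' < \nu_2(q-1)+\nu_2(i)$ and $d_{\nu_2(q-1)+\nu_2(i)}(\tau^i) = u\tau^{i-1}h_0^{\nu_2(q-1)+\nu_2(i)}$, with no room for a different or shorter differential.

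I expect the main obstacle to be the bidegree count of the previous paragraph: one must verify that the target of a $\tau$-power differential never leaks into $\ker \phi$, since otherwise a differential could be invisible in the $\text{H}\mathbb{Z}$ spectral sequence and thereby escape the reach of naturality. This is precisely where the sparsity of \Cref{Ext_A(1)_m2_easy}, dictated by the weight grading and by the relations $\tau h_1^3 = 0$ and $h_1 a = 0$, does the essential work. Once these counts are established, naturality of $\phi$ propagates every differential of the $\textbf{mASS}^{\mathbb{F}_q}(\text{H}\mathbb{Z})$ to the $\textbf{mASS}^{\mathbb{F}_q}(\text{kq})$, and the permanent-cycle and Leibniz reductions guarantee that there are no others.
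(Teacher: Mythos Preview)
Your proposal is correct and follows essentially the same route as the paper: show that $u, h_0, h_1, a, b$ are permanent cycles by tridegree inspection, reduce via the Leibniz rule to the differentials on $\tau^i$, and determine those by naturality of the map of spectral sequences induced by $\text{kq} \to \text{H}\mathbb{Z}$. Your treatment is in fact slightly more careful than the paper's in making explicit that $\phi$ is injective on the relevant target groups, which is what justifies transporting the $\text{H}\mathbb{Z}$-differentials back along the map rather than merely pushing kq-differentials forward.
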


\begin{proof}
    We first determine the possible Adams differentials on the generators of the $\mathrm{E}_2$-page. We see that $u$ cannot support a differential, as Adams differentials decrease stem degree by 1 and there are no classes in stem -2. Further, Adams differentials preserve weight. Since there are no classes in weight 0 in stem -1, as they are all divisible by $u$, we cannot have any differentials on $1$ or any power of $h_0$. If $h_1$ were to support a differential, then degree considerations force it to be of the form $d_r(h_1) = \tau h_0^{r+1}$. However, this would imply that $d_r(h_1h_0) = d_r(h_1)h_0 + h_1d_r(h_0) = \tau h_0^{r+2}$, which is impossible since $h_1h_0=0$. Thus $h_1$ does not support a differential. Finally, the only chance for $b$ to support a differential is $d_3(b) = h_1^7$, but since the weight of $b$ is 4 and the weight of $h_1^7$ is 7, this cannot happen. So, we see that if a class $x$ supports a differential, it must be $\tau$-divisible.
    
    Recall the quotient map $\text{kq} \to \text{H}\mathbb{Z}$ of \eqref{eq:kqToHZ}. This induces a map of spectral sequences $\textbf{mASS}^{\mathbb{F}_q}(\text{kq}) \to \textbf{mASS}^{\mathbb{F}_q}(\text{H}\mathbb{Z})$, which is realized on $\mathrm{E}_2$-pages as the evident quotient map
    \[
    \text{Ext}^{s,f,w}_{\euscr{A}(1)^\vee}(\mathbb{M}_2^{\mathbb{F}_q}) = 
    \frac{\mathbb{F}_2[u,\tau, h_0, h_1, a, b]}{(u^2,h_0h_1, \tau h_1^3, h_1 a, a^2=h_0^2b)} \to \mathbb{F}_2[u,\tau,  h_0]/(u^2) = \text{Ext}_{\euscr{A}(0)^\vee}^{s,f,w}(\mathbb{M}_2^{\mathbb{F}_q}),
    \]
    where $h_1, a, b \mapsto0$.
    In particular, this implies that the differentials on powers of $\tau$ determined by \Cref{F5 difs mass HZ} lift to the $\textbf{mASS}^{\mathbb{F}_q}(\text{kq}).$ Since $u, h_0, h_1, a,$ and $b$ are permanent cycles, the Liebniz rule determines all differentials in the spectral sequence, finishing the proof.
\end{proof}
Running the $\textbf{mASS}^{\mathbb{F}_q}(\text{kq})$, we see that
\[\pi_{s,w}^{\mathbb{F}_q}(\text{kq}) \cong\left\{\begin{array}{rl}
    \text{KO}_{s-2w}(\mathbb{F}_q) & w \equiv 0 \, (4) \\
    \text{KSp}_{s-2w}(\mathbb{F}_q) & w \equiv 2 \ (4) 
\end{array} \right.\]
for $s \geq 0$, agreeing with Friedlander's calculations given in \Cref{table:fried}. 

\begin{remark}
\label{rem:f5NegativeStemkq}
    By lifting differentials from the $\textbf{mASS}^{\mathbb{F}_q}(\text{H}\mathbb{Z})$, we are able to more clearly identify the values of $\pi_{s,w}^{\mathbb{F}_q}(\text{kq})$ for $s \leq0$. The quotient map $\text{kq} \to \text{H}\mathbb{Z}$ induces an isomorphism of motivic Adams spectral sequence $\mathrm{E}_2$-pages in stems $s \leq 0$, hence the differentials in the $\textbf{mASS}^{\mathbb{F}_q}(\text{kq})$ are isomorphic to the differentials in the $\textbf{mASS}^{\mathbb{F}_q}(\text{H}\mathbb{Z})$ in this range. This gives an isomorphism $\pi_{s,w}^{\mathbb{F}_q}(\text{kq}) \cong \pi_{s,w}^{\mathbb{F}_q}(\text{H}\mathbb{Z})$ for $s \leq 0$. More generally, we can express the homotopy of kq in terms of $\text{H}\mathbb{Z}$:
    \[
    \pi_{*,*}^F(\text{kq}) \cong \frac{(\pi_{*,*}^F(\text{H}\mathbb{Z}))[\eta, \tau^n\eta, \tau^n\eta^2, \alpha, \beta: n \geq 0]}{(2\eta, \eta\alpha, \tau^n\eta\alpha, \tau^n\eta^2\alpha, \alpha^2-4\beta)}, 
    \]
    where $\eta$ is detected by $h_1$, $\alpha$ is detected by $a$, and $\beta$ is detected by $b$. Note that the additional generators $\tau^n\eta, \tau^n\eta^2$ stem from the fact that $\tau$ dies in the $\textbf{mASS}^{\mathbb{F}_q}(\text{H}\mathbb{Z})$ while $\tau \eta$ survives in the $\textbf{mASS}^{\mathbb{F}_q}(\text{kq})$.
\end{remark}

\subsection{Brown--Gitler comodules}
We begin the inductive of computing the $\mathrm{E}_2$-page of the $\textbf{mASS}^{\mathbb{F}_q}(\text{kq} \otimes \text{kq})$. To start, we will compute $\text{Ext}_{\euscr{A}(1)^\vee}^{s,f,w}(B_0^{\mathbb{F}_q}(1))$ by the $\textbf{aAHSS}^{\mathbb{F}_q}(B_0^{\mathbb{F}_q}(1))$. This spectral sequence has signature
\[\mathrm{E}_1^{s,f,w,a} = \text{Ext}^{s,f,w}_{\euscr{A}(1)}(\mathbb{M}_2^{\mathbb{F}_q}) \otimes \mathbb{M}^{\mathbb{F}_q}_2\{[1], [\overline{\xi}_1], [\overline{\tau}_1]\}\implies \text{Ext}^{s,f,w}_{\euscr{A}(1)}(B_0^{\mathbb{F}_q}(1))\]
with differentials taking the form
\[d_r:\mathrm{E}_r^{s,f,w,a} \to \mathrm{E}_r^{s-1, f+1,w, a-r}.\]
We have discussed the differentials in this spectral sequence in \Cref{inductive process section}. In particular, for any $\alpha \in \text{Ext}_{\euscr{A}(1)^\vee}^{*,*,*}(\mathbb{M}_2^{\mathbb{F}_q})$, we have that
\[d_1(\alpha[3]) = h_0\alpha[2],\]
and
\[d_2(\alpha[2]) = h_1\alpha[0].\]
There is a potential third differential between Atiyah--Hirzebruch filtrations 3 and 0, but we see that for degree reasons this is not possible. Since the differentials are module maps over $\text{Ext}_{\euscr{A}(1)^\vee}^{*,*,*}(\mathbb{M}_2^{\mathbb{F}_q})$, this determines the spectral sequence. Hidden extensions lift in the same way as in the $\mathbb{C}$-motivic case; see \cite[Section 3.4]{CQ21}. We depict the $\mathrm{E}_\infty$-page in \Cref{fig:F5_aAHSS_Ext_B01_Einfty}. 

\begin{figure}[h]
    \centering
    \includegraphics[scale=.75]{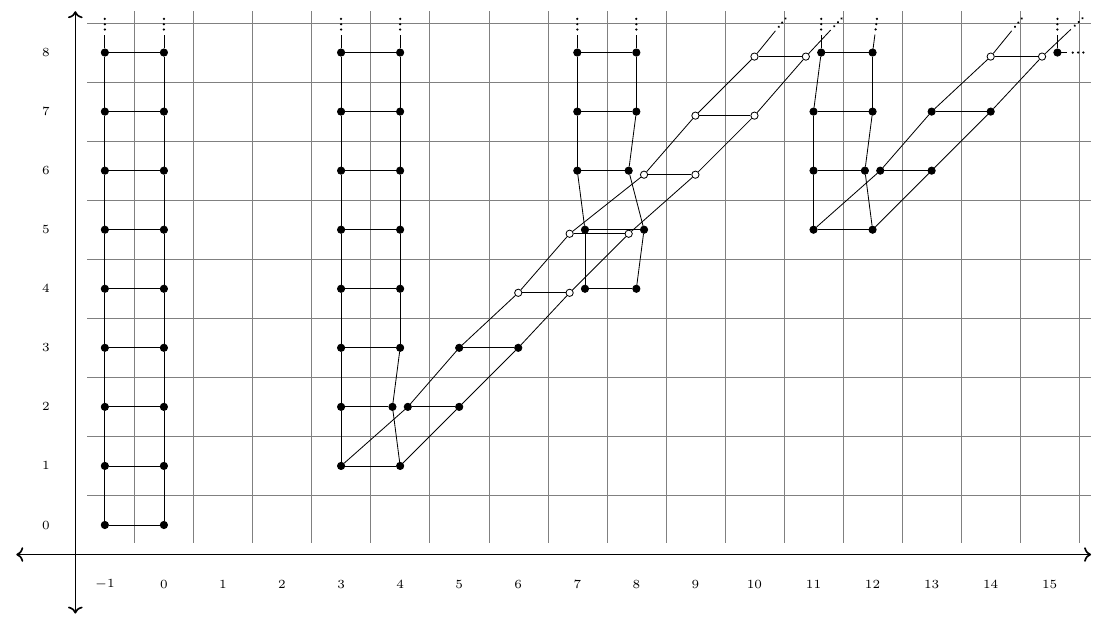}
    \caption{The $\mathrm{E}_\infty$-page of the $\textbf{aAHSS}(B_0^{\mathbb{F}_q}(1))$ for $q \equiv 1 \, (4)$.}
    \label{fig:F5_aAHSS_Ext_B01_Einfty}
\end{figure}

The equivalence of motivic spectra $\text{ksp} \simeq \text{H}\mathbb{Z}_1^{\mathbb{F}_q} \otimes \text{kq}$ from \cref{eq:kspSplitting}
implies that the $\textup{E}_2$-page of the $\textbf{mASS}^{\mathbb{F}_q}(\text{ksp})$ is given by $\text{Ext}_{\euscr{A}(1)^\vee}^{*,*,*}(B_0^{\mathbb{F}_q}(1)).$ 
The K\"unneth spectral sequence computing $\text{H}_{*,*}(\text{ksp})$ collapses, and so the change of rings isomorphism implies that the $\mathrm{E}_2$-page of the $\textbf{mASS}^{\mathbb{F}_q}(\text{ksp})$ is isomorphic to $\text{Ext}_{\euscr{A}(1)^\vee}^{*,*,*}(B_0^{\mathbb{F}_q}(1)).$ However, there are differentials in this spectral sequence.

\begin{lemma}
\label{lem:kspmASSDifsEasy}
    Let $q \equiv 1 \, (4)$. In the $\textup{\textbf{mASS}}^{\mathbb{F}_q}(\textup{ksp})$, the differentials are determined by the differentials in the $\textup{\textbf{mASS}}^{\mathbb{F}_q}(\textup{kq})$ and the $\textup{\textbf{mASS}}^{\mathbb{F}_q}(\textup{H}\mathbb{Z}).$
\end{lemma}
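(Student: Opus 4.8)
The plan is to exploit two structures at once: the $\text{kq}$-module structure on $\text{ksp}$ coming from the equivalence $\text{ksp} \simeq \text{H}\mathbb{Z}_1^{\mathbb{F}_q} \otimes \text{kq}$ of \Cref{eq:kspSplitting}, and the cofiber sequence $\Sigma^{4,2}\text{kq} \to \text{ksp} \to \text{H}\mathbb{Z}$ of \Cref{cofib:kqTokspToHZ}. The equivalence makes $\textbf{mASS}^{\mathbb{F}_q}(\text{ksp})$ a module over the ring spectral sequence $\textbf{mASS}^{\mathbb{F}_q}(\text{kq})$, so every Adams differential is linear over the action of $\text{Ext}^{***}_{\euscr{A}(1)^\vee}(\mathbb{M}_2^{\mathbb{F}_q})$ and obeys the Leibniz rule. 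Since \Cref{prop:EasyHZDifsLiftTokq} already determines all differentials in $\textbf{mASS}^{\mathbb{F}_q}(\text{kq})$, it suffices to compute the differentials on a set of module generators of the $E_2$-page $\text{Ext}^{***}_{\euscr{A}(1)^\vee}(B_0^{\mathbb{F}_q}(1))$. By the $\textbf{aAHSS}(B_0^{\mathbb{F}_q}(1))$ computation recorded in \Cref{fig:F5_aAHSS_Ext_B01_Einfty}, this page is generated over the $0$-line by the three cell classes $[1]$, $[\overline{\xi}_1]$, and $[\overline{\tau}_1]$, so only these three families of generators must be treated directly.

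First I would pin down the generator differentials using the maps of spectral sequences arising from the cofiber sequence. The quotient $\text{ksp} \to \text{H}\mathbb{Z}$ induces a map $\textbf{mASS}^{\mathbb{F}_q}(\text{ksp}) \to \textbf{mASS}^{\mathbb{F}_q}(\text{H}\mathbb{Z})$ sending the bottom-cell class $[1]$ to the unit; consequently the $\tau$-power differentials of \Cref{F5 difs mass HZ} lift to differentials on the classes $\tau^i[1]$, exactly as in the proof of \Cref{prop:EasyHZDifsLiftTokq}. Dually, the inclusion $\Sigma^{4,2}\text{kq} \to \text{ksp}$ induces $\textbf{mASS}^{\mathbb{F}_q}(\Sigma^{4,2}\text{kq}) \to \textbf{mASS}^{\mathbb{F}_q}(\text{ksp})$, so any class lying in its image supports precisely the differential dictated by the $\text{kq}$-spectral sequence. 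The remaining generators coming from $[\overline{\xi}_1]$ and $[\overline{\tau}_1]$ are tied to $[1]$ by the surviving $h_0$- and $h_1$-extensions produced by the $d_1$- and $d_2$-differentials of \Cref{aAHSS d1}, and since $h_0$ and $h_1$ are permanent cycles, the Leibniz rule forces their differentials from those already determined. The cell classes occupying the positions that are permanent cycles for the weight and stem reasons of \Cref{prop:EasyHZDifsLiftTokq} support no differentials, giving the surviving classes.

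The main obstacle is ensuring that these lifted differentials are internally consistent and, crucially, that no additional differential appears that is not forced by the $\text{kq}$- and $\text{H}\mathbb{Z}$-spectral sequences. The delicate point is the interaction between the $\textbf{aAHSS}$ extensions linking the three cells and the Adams differentials on the $\tau^i[1]$ family: I must verify that a differential emanating from $\tau^i[1]$ lands compatibly with the $h_0$- and $h_1$-module relations in $\text{Ext}^{***}_{\euscr{A}(1)^\vee}(B_0^{\mathbb{F}_q}(1))$, and that the potential Massey-product $d_3$ between the cells (which is trivial here since $q \equiv 1 \, (4)$) contributes nothing. As a final consistency check, running the resulting spectral sequence should recover the homotopy of $\text{ksp}$ as the appropriate shift of $\pi_{**}^{\mathbb{F}_q}(\text{kq})$, matching Friedlander's computation of $\text{KSp}_*(\mathbb{F}_q)$ in \Cref{table:fried}; this confirms that the differentials determined by the $\textbf{mASS}^{\mathbb{F}_q}(\text{kq})$ and $\textbf{mASS}^{\mathbb{F}_q}(\text{H}\mathbb{Z})$ account for the entire $E_\infty$-page.
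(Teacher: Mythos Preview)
Your approach is correct and uses the same two inputs as the paper—the cofiber sequence $\Sigma^{4,2}\text{kq} \to \text{ksp} \to \text{H}\mathbb{Z}$ and the induced maps of motivic Adams spectral sequences—but the paper organizes the argument more efficiently. Rather than tracking the module generators $[1]$, $[\overline{\xi}_1]$, $[\overline{\tau}_1]$ and invoking the Leibniz rule and the $h_0$/$h_1$-extensions from the $\textbf{aAHSS}$, the paper applies $\text{Ext}_{\euscr{A}^\vee}^{***}(\text{H}_{**}(-))$ to the cofiber sequence and observes that the connecting homomorphism vanishes for degree reasons. This yields an additive splitting
\[
\text{Ext}_{\euscr{A}(1)^\vee}^{s,f,w}(B_0^{\mathbb{F}_q}(1)) \cong \Sigma^{4,2}\text{Ext}_{\euscr{A}(1)^\vee}^{s,f,w}(\mathbb{M}_2^{\mathbb{F}_q})\langle 1 \rangle \oplus \text{Ext}_{\euscr{A}(0)^\vee}^{s,f,w}(\mathbb{M}_2^{\mathbb{F}_q})
\]
of the $E_2$-page itself. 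The two maps of spectral sequences then determine the differentials on each summand separately, and a degree check rules out any cross-differentials. Your route via the $\text{kq}$-module structure and the cell-by-cell generator analysis works, but the consistency issues you flag in your final paragraph simply evaporate once the direct-sum decomposition is in hand; the paper's observation about the connecting homomorphism is the shortcut that lets you bypass them.
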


\begin{proof}
    Recall the cofiber sequence of \cref{cofib:kqTokspToHZ}
    \[\Sigma^{4,2}\text{kq} \to \text{ksp} \to \text{H}\mathbb{Z}.\]
    By applying $\text{Ext}_{\euscr{A}(1)^\vee}^{*,*,*}(\text{H}_{*,*}(-))$ to this cofiber sequence, we get a long exact sequence in Ext groups. Notice that these Ext groups are the $\mathrm{E}_2$-pages for the respective motivic Adams spectral sequences. For degree reasons, the connecting homomorphism
    \[\delta:\text{Ext}_{\euscr{A}^\vee}^{s,f,w}(\text{H}_{*,*}(\text{H}\mathbb{Z})) \to \text{Ext}_{\euscr{A}^\vee}^{s-1, f+1, w}(\text{H}_{*,*}(\Sigma^{4,2}\text{kq}))\]
    is trivial. Thus, after a few change of rings isomorphisms, we have a decomposition of the $\mathrm{E}_2$-page of the $\textbf{mASS}^{\mathbb{F}_q}(\text{ksp})$:
    \[\text{Ext}_{\euscr{A}(1)^\vee}^{s,f,w}(B_0^{\mathbb{F}_q}(1)) \cong \Sigma^{4,2}\text{Ext}_{\euscr{A}(1)^\vee}^{s,f,w}(\mathbb{M}_2^{\mathbb{F}_q})\langle 1 \rangle \oplus \text{Ext}_{\euscr{A}(0)^\vee}^{s,f,w}(\mathbb{M}_2^{\mathbb{F}_q}),\] 
    where $\langle-\rangle$ denotes a shift in the Adams filtration degree. 
    
    The projection $\text{ksp} \to \text{H}\mathbb{Z}$ induces evident quotient map on $\mathrm{E}_2$-pages which lift the differentials from \Cref{F5 difs mass HZ}. The inclusion $\Sigma^{4,2}\text{kq} \to \text{ksp}$ induces the evident inclusion on $\mathrm{E}_2$-pages, lifting the differentials from \Cref{prop:EasyHZDifsLiftTokq} to the $\textbf{mASS}^{\mathbb{F}_q}(\text{ksp})$ to this summand. For degree reasons there are no other differentials possible, finishing the proof.
\end{proof}

By running the $\textbf{mASS}^{\mathbb{F}_q}(\text{ksp})$ and comparing with \Cref{table:fried}, we recover the expected isomorphism for $s \geq 4$
\[\pi_{s,w}^{\mathbb{F}_q}(\text{ksp}) \cong\left\{\begin{array}{rl}
    \text{KSp}_{s-2w}(\mathbb{F}_q) & w \equiv 0 \, (4); \\
    \text{KO}_{s-2w}(\mathbb{F}_q) & w \equiv 2 \ (4). 
\end{array} \right.\]

The following is immediate from our description.
\begin{lemma}
\label{lemma:easyExtB01}
    Let $q \equiv 1 \, (4)$. There is an isomorphism of $\textup{Ext}_{\euscr{A}(1)^\vee}^{*,*,*}(\mathbb{M}_2^{\mathbb{F}_q})$-modules
    \[\textup{Ext}^{s,f,w}_{\euscr{A}(1)^\vee}(B_0^{\mathbb{F}_q}(1)) \cong \textup{Ext}_{\euscr{A}(1)^\vee}^{s,f,w}(B_0^\mathbb{C}(1)) \otimes \mathbb{M}_2^{\mathbb{F}_q}.\]
\end{lemma}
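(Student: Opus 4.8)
The goal is to establish the isomorphism
\[
\textup{Ext}^{s,f,w}_{\euscr{A}(1)^\vee}(B_0^{\mathbb{F}_q}(1)) \cong \textup{Ext}_{\euscr{A}(1)^\vee}^{s,f,w}(B_0^\mathbb{C}(1)) \otimes \mathbb{M}_2^{\mathbb{F}_q},
\]
and the statement is flagged as \emph{immediate from our description}, so the plan is to read off the answer from the $\textbf{aAHSS}(B_0^{\mathbb{F}_q}(1))$ computation just completed, rather than to prove a new structural result. The key observation driving the isomorphism is \Cref{eq:ExtA1F5}, which gives $\text{Ext}^{***}_{\euscr{A}(1)^\vee}(\mathbb{M}_2^{\mathbb{F}_q}) \cong \text{Ext}_{\euscr{A}(1)^\vee}^{***}(\mathbb{M}_2^\mathbb{C})\otimes \mathbb{M}_2^{\mathbb{F}_q}$ in the trivial-Bockstein case. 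Since the $E_1$-page of the $\textbf{aAHSS}$ is built from three copies of $\text{Ext}^{***}_{\euscr{A}(1)^\vee}(\mathbb{M}_2^{\mathbb{F}_q})$ indexed by the cells $[1], [\overline{\xi}_1], [\overline{\tau}_1]$, the $\mathbb{F}_q$-motivic $E_1$-page is literally the $\mathbb{C}$-motivic $E_1$-page tensored with $\mathbb{M}_2^{\mathbb{F}_q}$ over $\mathbb{F}_2$.

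The main work is to verify that the identical differentials run in both spectral sequences, so that the comparison at $E_1$ descends to $E_\infty$. Here I would invoke \Cref{aAHSS d1}: the $d_1$ and $d_2$ differentials are given by the universal formulae $d_1(\alpha[3]) = h_0\alpha[2]$ and $d_2(\alpha[2]) = h_1\alpha[0]$, and these are $\text{Ext}^{***}_{\euscr{A}(1)^\vee}(\mathbb{M}_2^{\mathbb{F}_q})$-linear, hence determined by the same cobar formula as over $\mathbb{C}$. The potential $d_3 = \langle \alpha, h_0, h_1\rangle[0]$ vanishes for degree reasons, exactly as noted in the preceding paragraph. Because the generators $u, \tau$ of $\mathbb{M}_2^{\mathbb{F}_q}$ are permanent cycles over which every differential is linear, the differentials over $\mathbb{F}_q$ are precisely those over $\mathbb{C}$ tensored up with $\mathbb{M}_2^{\mathbb{F}_q}$. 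Thus the $E_\infty$-pages agree after tensoring.

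Finally I would address the passage from $E_\infty$ to $\text{Ext}$ itself, namely resolution of the associated-graded ambiguity. The $\textbf{aAHSS}$ converges by \Cref{aAHSS convergenece general}, giving an associated-graded isomorphism; one must check there are no hidden extensions obstructing the stated tensor decomposition. The cleanest way to handle this is exactly as stated in the text: the hidden extensions lift from the $\mathbb{C}$-motivic case verbatim via the comparison in \cite[Section 3.4]{CQ21}, so the multiplicative and module structure is inherited. Since all structure—the $E_1$-page, the differentials, and the hidden extensions—is obtained by tensoring the $\mathbb{C}$-motivic data with $\mathbb{M}_2^{\mathbb{F}_q}$, the claimed isomorphism follows.

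The step I expect to be the genuine (if mild) obstacle is the hidden-extension bookkeeping: an abstract tensor-product isomorphism of $E_\infty$-pages does not automatically lift to $\text{Ext}$ unless one knows the extensions over $\mathbb{F}_q$ are forced by the same patterns as over $\mathbb{C}$. This is precisely why the argument must point to the $\mathbb{C}$-motivic analysis in \cite{CQ21} rather than relying on formal nonsense; once that reference is invoked, the flatness of $\mathbb{M}_2^{\mathbb{F}_q}$ and the splitting \Cref{eq:ExtA1F5} make everything routine.
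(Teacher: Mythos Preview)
Your proposal is correct and follows exactly the approach the paper intends: the lemma is stated as ``immediate from our description,'' and you have simply unpacked that description by comparing the $E_1$-page, differentials, and hidden extensions of the $\textbf{aAHSS}(B_0^{\mathbb{F}_q}(1))$ with its $\mathbb{C}$-motivic counterpart via \Cref{eq:ExtA1F5}. The paper provides no further argument beyond this, so your elaboration is faithful to (and more detailed than) the original.
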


\begin{proposition}
\label{prop:easyExtB01powers}
    Let $q \equiv 1 \, (4)$. For all $i \geq 0$, there is an isomorphism of $\textup{Ext}_{\euscr{A}(1)^\vee}^{*,*,*}(\mathbb{M}_2^{\mathbb{F}_q})$-modules
    \[\frac{\textup{Ext}^{s,f,w}_{\euscr{A}(1)^\vee}(B_0^{\mathbb{F}_q}(1)^{\otimes i})}{v_1\textup{-torsion}} \cong \frac{\textup{Ext}_{\euscr{A}(1)^\vee}^{s,f,w}(B_0^{\mathbb{C}}(1)^{\otimes i})}{v_1\textup{-torsion}} \otimes  \mathbb{M}_2^{\mathbb{F}_q}.\]
\end{proposition}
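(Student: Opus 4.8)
The plan is to realize every object over $\mathbb{F}_q$ appearing in the statement as a flat base change of its $\mathbb{C}$-motivic counterpart, extending the pattern already recorded for $i=0$ in \Cref{eq:ExtA1F5} and for $i=1$ in \Cref{lemma:easyExtB01}. The structural input that makes this possible is the triviality of the Bockstein action: when $q \equiv 1\,(4)$ the class $u$ is central and satisfies $u^2=0$, the defining relations of $\euscr{A}^\vee$ read $\overline{\tau}_i^2 = \tau\overline{\xi}_{i+1}$ exactly as over $\mathbb{C}$, and hence $\mathbb{M}_2^{\mathbb{F}_q} = \mathbb{M}_2^{\mathbb{C}}[u]/(u^2)$ is free of rank two, and in particular flat, over $\mathbb{M}_2^{\mathbb{C}} = \mathbb{F}_2[\tau]$. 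First I would record the resulting Hopf-algebra identification $\euscr{A}(1)^\vee \cong \euscr{A}(1)^\vee_{\mathbb{C}} \otimes_{\mathbb{M}_2^{\mathbb{C}}} \mathbb{M}_2^{\mathbb{F}_q}$, where $\euscr{A}(1)^\vee_{\mathbb{C}}$ denotes the $\mathbb{C}$-motivic analogue over $\mathbb{M}_2^{\mathbb{C}}$, together with the comodule identification $B_0^{\mathbb{F}_q}(1) \cong B_0^{\mathbb{C}}(1) \otimes_{\mathbb{M}_2^{\mathbb{C}}} \mathbb{M}_2^{\mathbb{F}_q}$; the latter is immediate from the description of the Brown--Gitler comodules as the weight-filtered pieces of $(\euscr{A}\modmod\euscr{A}(1))^\vee$, whose generators and coactions agree with the $\mathbb{C}$-motivic ones.

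Next I would promote this to tensor powers. Since $-\otimes_{\mathbb{M}_2^{\mathbb{C}}}\mathbb{M}_2^{\mathbb{F}_q}$ is a symmetric monoidal functor from $\euscr{A}(1)^\vee_{\mathbb{C}}$-comodules to $\euscr{A}(1)^\vee$-comodules, base change commutes with iterated tensor products, yielding
\[B_0^{\mathbb{F}_q}(1)^{\otimes i} \cong \big(B_0^{\mathbb{C}}(1) \otimes_{\mathbb{M}_2^{\mathbb{C}}} \mathbb{M}_2^{\mathbb{F}_q}\big)^{\otimes_{\mathbb{M}_2^{\mathbb{F}_q}} i} \cong B_0^{\mathbb{C}}(1)^{\otimes i} \otimes_{\mathbb{M}_2^{\mathbb{C}}} \mathbb{M}_2^{\mathbb{F}_q},\]
where the first tensor power is taken over $\mathbb{M}_2^{\mathbb{F}_q}$ and the second over $\mathbb{M}_2^{\mathbb{C}}$. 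With the comodule now exhibited as a base change, I would invoke flat base change for $\text{Ext}$ over a Hopf algebroid: because $\mathbb{M}_2^{\mathbb{F}_q}$ is flat over $\mathbb{M}_2^{\mathbb{C}}$ and $\euscr{A}(1)^\vee$ is obtained from $\euscr{A}(1)^\vee_{\mathbb{C}}$ by the identical base change, a cobar (or injective) resolution computing the $\mathbb{C}$-motivic $\text{Ext}$ base-changes to one computing the $\mathbb{F}_q$-motivic $\text{Ext}$, giving the \emph{full} isomorphism
\[\text{Ext}^{s,f,w}_{\euscr{A}(1)^\vee}(B_0^{\mathbb{F}_q}(1)^{\otimes i}) \cong \text{Ext}^{s,f,w}_{\euscr{A}(1)^\vee}(B_0^{\mathbb{C}}(1)^{\otimes i}) \otimes_{\mathbb{M}_2^{\mathbb{C}}} \mathbb{M}_2^{\mathbb{F}_q}\]
before any localization.

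Finally I would pass to the quotient by $v_1$-torsion. The relevant periodicity class already lives in the $\mathbb{C}$-motivic $\text{Ext}$ (it is visible in \Cref{Ext_A(1)_m2_easy}) and acts on both sides through the common module structure over $\text{Ext}_{\euscr{A}(1)^\vee}(\mathbb{M}_2^{\mathbb{F}_q})$; since $-\otimes_{\mathbb{M}_2^{\mathbb{C}}}\mathbb{M}_2^{\mathbb{F}_q}$ is exact, it carries the $v_1$-torsion submodule isomorphically onto the $v_1$-torsion submodule of the base change and commutes with forming the quotient, which yields the stated isomorphism modulo $v_1$-torsion. I expect the main obstacle to be the careful verification that the two symmetric monoidal structures—tensoring over $\mathbb{M}_2^{\mathbb{C}}$ versus over $\mathbb{M}_2^{\mathbb{F}_q}$—are compatible under base change at the level of $\euscr{A}(1)^\vee$-comodules, and that the flat base change isomorphism for $\text{Ext}$ genuinely applies in the Hopf-algebroid setting; once the flatness of $\mathbb{M}_2^{\mathbb{F}_q}$ over $\mathbb{M}_2^{\mathbb{C}}$ is in hand, both reduce to standard homological algebra. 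It is worth emphasizing that the $v_1$-torsion quotient is not forced by this method, which in fact produces an unlocalized isomorphism, but is imposed only because the $\mathbb{C}$-motivic input of \cite{CQ21} is itself recorded only modulo $v_1$-torsion.
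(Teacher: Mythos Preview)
Your argument is correct and in fact yields the stronger, unlocalized isomorphism you point out; the flat base change of Hopf algebras along $\mathbb{M}_2^{\mathbb{C}} \hookrightarrow \mathbb{M}_2^{\mathbb{C}}[u]/(u^2) = \mathbb{M}_2^{\mathbb{F}_q}$ is exactly what makes the $q\equiv 1\,(4)$ case parallel to $\mathbb{C}$, and since the cobar complex base-changes termwise along a flat extension, $\text{Ext}$ does as well. The paper takes a different route: it argues by induction on $i$ via the algebraic Atiyah--Hirzebruch spectral sequence obtained by filtering the last $B_0^{\mathbb{F}_q}(1)$ factor, identifies the $E_1$-page with the $\mathbb{C}$-motivic one tensored with $\mathbb{M}_2^{\mathbb{F}_q}$ by the inductive hypothesis, and checks that the $d_1$, $d_2$ (and vanishing $d_3$) differentials match those over $\mathbb{C}$. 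Your approach is shorter and conceptually cleaner here, and it explains transparently why no $v_1$-localization is needed. The paper's approach, on the other hand, is the one that survives to the $q\equiv 3\,(4)$ setting of \Cref{prop:ExtB01PowersF3}, where the nontrivial Bockstein action breaks the identification $\euscr{A}(1)^\vee \cong \euscr{A}(1)^\vee_{\mathbb{C}}\otimes_{\mathbb{M}_2^{\mathbb{C}}}\mathbb{M}_2^{\mathbb{F}_q}$ (the left and right units on $\tau$ disagree, so there is no flat base change of Hopf algebras available) and one is forced to run the $\textbf{aAHSS}$ directly; presenting the $q\equiv 1\,(4)$ case in the same inductive style keeps the two arguments uniform.
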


\begin{proof}
    The case $i=0$ was shown in \cref{eq:ExtA1F5}, and the case $i=1$ was shown in \Cref{lemma:easyExtB01}. Assume the result holds for all $n \leq i$. We may calculate the Ext group in question by another algebraic Atiyah--Hirzebruch spectral sequence. For the rest of this proof, we implicitly compute modulo $v_1$-torsion. By applying the functor $\text{Ext}^{*,*,*}_{\euscr{A}(1)^\vee}(B_0^{\mathbb{F}_q}(1)^{\otimes i} \otimes -)$ to the cellular filtration of $B_0^{\mathbb{F}_q}(1)$, we obtain a spectral sequence with signature
    \[\mathrm{E}_1^{s,f,w,a} = \text{Ext}_{\euscr{A}(1)^\vee}^{s,f,w}(B_0^{\mathbb{F}_q}(1)^{\otimes i}) \otimes \mathbb{M}_2^{\mathbb{F}_q}\{[1], [\overline{\xi}_1], [\overline{\tau}_1]\} \implies \text{Ext}^{s,f,w}_{\euscr{A}(1)^\vee}(B_0^{\mathbb{F}_q}(1)^{\otimes i+1}).\]
    By induction, we can rewrite the $\mathrm{E}_1$-page as
    \[\left(\text{Ext}^{s,f,w}_{\euscr{A}(1)^\vee}(B_0^{\mathbb{C}}(1)^{\otimes i}) \otimes \mathbb{M}_2^\mathbb{C}\{1, [\overline{\xi}_1], [\overline{\tau}_1]\}\right) \otimes \mathbb{M}_2^{\mathbb{F}_q}.\]
    As our spectral sequence is induced by the same filtration as the $\textbf{aAHSS}(B_0^{\mathbb{F}_q}(1))$, the differentials are determined by the same formulae. For degree reasons, we see that there are no $d_3$-differentials. Hidden extensions follow as in the classical case. Since the differentials are the same as in the $\mathbb{C}$-motivic case by \cite[Lemma 3.36]{CQ21}, we see that the abutment is isomorphic to 
    \[\text{Ext}_{\euscr{A}(1)^\vee}^{s,f,w}(B_0^{\mathbb{C}}(1)^{\otimes i+1}) \otimes \mathbb{M}_2^{\mathbb{F}_q},\]
    which is what we wanted to show.
\end{proof}

\begin{thm}
\label{thm:easyExtB0k}
    Let $q \equiv 1 \, (4)$. For all $k \geq 0$, there is an isomorphism
    \[\frac{\textup{Ext}^{s,f,w}_{\euscr{A}(1)^\vee}(B_0^{\mathbb{F}_q}(k))}{v_1\textup{-torsion}} \cong \frac{\textup{Ext}^{s,f,w}_{\euscr{A}(1)^\vee}(B_0^{\mathbb{C}}(k))}{v_1\textup{-torsion}} \otimes\mathbb{M}_2^{\mathbb{F}_q} .\]
\end{thm}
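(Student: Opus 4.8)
The plan is to bypass the inductive scaffolding entirely and instead exploit that, when $q \equiv 1 \,(4)$, every object in sight is a flat base change of its $\mathbb{C}$-motivic counterpart. Since $\rho = [-1] = 0$ in this case, the relation defining $\euscr{A}^\vee$ reduces to $\overline{\tau}_i^2 = \tau\overline{\xi}_{i+1}$, exactly as over $\mathbb{C}$, and $\mathbb{M}_2^{\mathbb{F}_q} = \mathbb{F}_2[u,\tau]/(u^2)$ is free of rank two over $\mathbb{M}_2^{\mathbb{C}} = \mathbb{F}_2[\tau]$, with $u$ a central, weight-zero scalar that appears in no coproduct or coaction formula. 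Thus $\euscr{A}(1)^\vee$ over $\mathbb{F}_q$ is obtained from $\euscr{A}(1)^\vee$ over $\mathbb{C}$ by extending scalars along the flat map $\mathbb{F}_2[\tau] \hookrightarrow \mathbb{M}_2^{\mathbb{F}_q}$, and the weight definition of the Brown--Gitler comodules (together with $\mathrm{wt}(u)=0$) furnishes an identification of $\euscr{A}(1)^\vee$-comodules $B_0^{\mathbb{F}_q}(k) \cong B_0^{\mathbb{C}}(k) \otimes_{\mathbb{F}_2[\tau]} \mathbb{M}_2^{\mathbb{F}_q}$.

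Granting this, the first step is to run the cobar complex. Writing $C^\bullet$ for the (normalized) cobar complex computing $\text{Ext}_{\euscr{A}(1)^\vee}$, the base-change identities above give a termwise isomorphism $C^\bullet_{\mathbb{F}_q}(B_0^{\mathbb{F}_q}(k)) \cong C^\bullet_{\mathbb{C}}(B_0^{\mathbb{C}}(k)) \otimes_{\mathbb{F}_2[\tau]} \mathbb{M}_2^{\mathbb{F}_q}$, since each cobar term is an iterated relative tensor product of copies of $\euscr{A}(1)^\vee$ with the comodule, and the extra copies of $\mathbb{M}_2^{\mathbb{F}_q}$ collapse over the relative tensor product. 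Because $\mathbb{M}_2^{\mathbb{F}_q}$ is free, hence flat, over $\mathbb{F}_2[\tau]$, taking cohomology commutes with $-\otimes_{\mathbb{F}_2[\tau]} \mathbb{M}_2^{\mathbb{F}_q}$, yielding the integral isomorphism
\[\text{Ext}^{s,f,w}_{\euscr{A}(1)^\vee}(B_0^{\mathbb{F}_q}(k)) \cong \text{Ext}^{s,f,w}_{\euscr{A}(1)^\vee}(B_0^{\mathbb{C}}(k)) \otimes \mathbb{M}_2^{\mathbb{F}_q}.\]
This recovers \Cref{lemma:easyExtB01} and \Cref{eq:ExtA1F5} as the cases $k=1$ and $k=0$. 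The isomorphism is, by construction, a map of modules over $\text{Ext}^{***}_{\euscr{A}(1)^\vee}(\mathbb{M}_2^{\mathbb{C}})$ acting through the base-change ring map, so it preserves $v_1$-torsion and descends to the quotients by $v_1$-torsion, giving the stated form a fortiori.

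The one point requiring care, and the main obstacle, is justifying the comodule identification $B_0^{\mathbb{F}_q}(k) \cong B_0^{\mathbb{C}}(k) \otimes_{\mathbb{F}_2[\tau]} \mathbb{M}_2^{\mathbb{F}_q}$ at the level of \emph{coactions}, not merely underlying modules. This is exactly where $q \equiv 1 \,(4)$ is essential: the vanishing of $\rho$ makes the quotient $\euscr{A}^\vee \to \euscr{A}(1)^\vee$ and the residual coaction on $(\euscr{A}\modmod\euscr{A}(1))^\vee$ agree with the $\mathbb{C}$-motivic formulas after adjoining the inert scalar $u$, and one must verify that restricting to the weight-$\leq 2k$ subcomodule is compatible with the base change. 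Should one prefer to remain within the paper's framework, the alternative is a strong induction on $k$ via the short exact sequences of \Cref{prop:ses bg}: applying $\text{Ext}_{\euscr{A}(1)^\vee}(-)$ produces long exact sequences relating $B_0^{\mathbb{F}_q}(2k)$ and $B_0^{\mathbb{F}_q}(2k+1)$ to $B_0^{\mathbb{F}_q}(k)$, to $B_0^{\mathbb{F}_q}(k)\otimes B_0^{\mathbb{F}_q}(1)$ (controlled by \Cref{prop:easyExtB01powers} and the inductive hypothesis), and to a $B_1^{\mathbb{F}_q}(k-1)$ term handled by change of rings to $\euscr{A}(0)^\vee$; one then checks that every connecting map and hidden extension matches the $\mathbb{C}$-motivic computation modulo $v_1$-torsion. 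I would nonetheless present the base-change argument as primary, since it is uniform in $k$ and yields the stronger integral statement.
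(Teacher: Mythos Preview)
Your base-change argument is correct and is genuinely different from the paper's proof. The paper argues by strong induction on $k$ using the short exact sequences of \Cref{prop:ses bg}: for each parity of $k$ it applies $\text{Ext}_{\euscr{A}(1)^\vee}(-)$, observes that the connecting homomorphism vanishes modulo $v_1$-torsion, identifies the cokernel term via change of rings to $\euscr{A}(0)^\vee$, and handles the kernel term either by the inductive hypothesis (even $k$) or by a further algebraic Atiyah--Hirzebruch spectral sequence (odd $k$). Your approach instead exploits that for $q\equiv 1\,(4)$ the Hopf algebroid is an honest Hopf algebra obtained from the $\mathbb{C}$-motivic one by the flat extension $\mathbb{F}_2[\tau]\hookrightarrow\mathbb{F}_2[u,\tau]/(u^2)$, so the cobar complex base-changes termwise and cohomology commutes with the tensor. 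This is cleaner, uniform in $k$, and in fact yields the stronger \emph{integral} isomorphism before passing to $v_1$-torsion quotients; the paper's method only controls things modulo $v_1$-torsion because it relies on the connecting maps vanishing there. On the other hand, the paper's inductive scaffolding is not wasted effort: it is precisely the template reused verbatim in \Cref{thm:Ext-A1-B0k-F3} for $q\equiv 3\,(4)$, where $\rho\neq 0$, the Hopf algebroid is genuinely not a base change of the $\mathbb{C}$-motivic one, and your flatness argument is unavailable. Your final paragraph correctly anticipates this inductive alternative and matches the paper's route.
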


\begin{proof}
    We use the short exact sequences of $\euscr{A}(1)^\vee$-comodules described in \Cref{prop:ses bg}. For convenience, all Ext groups are implictly computed modulo $v_1$-torsion in this proof. The case of $k=1$ was shown in \Cref{lemma:f3HZdifs}. Now, suppose that the theorem is true for all $i <k$.

    Suppose that $k$ is even. There is a short exact sequence of Brown--Gitler comodules
    \[0 \to \Sigma^{2k, k}B_0^{\mathbb{F}_q}(\tfrac{k}{2}) \to B_0^{\mathbb{F}_q}(k) \to B_1^{\mathbb{F}_q}(\tfrac{k}{2}-1) \otimes (\euscr{A}(1) \modmod \euscr{A}(0))^\vee \to 0.\]
    Applying the functor $\text{Ext}_{\euscr{A}(1)^\vee}^{*,*,*}(-)$ gives a long exact sequence of $\text{Ext}_{\euscr{A}(1)^\vee}^{*,*,*}(\mathbb{M}_2^{\mathbb{F}_q})$-modules. We can use a change of rings isomorphism to rewrite the cokernel in Ext:
    \[\text{Ext}_{\euscr{A}(1)^\vee}^{*,*,*}(B_1^{\mathbb{F}_q}(\tfrac{k}{2}-1) \otimes (\euscr{A}(1)\modmod \euscr{A}(0))^\vee) \cong \text{Ext}_{\euscr{A}(0)^\vee}^{*,*,*}(B_1^{\mathbb{F}_q}(\tfrac{k}{2}-1)).\]
    As we are working mod $v_1$-torsion, the connecting homomorphism must be trivial. Thus, the Ext group in question decomposes into the Ext group of the kernel and the Ext group of the cokernel. The Ext group of the kernel,  $\text{Ext}_{\euscr{A}(1)^\vee}^{*,*,*}(\Sigma^{2k, k}B_0^{\mathbb{F}_q}(\tfrac{k}{2}))$, is handled by the induction hypothesis. A simple computation in shows that the Ext group of the cokernel consists solely of a direct sum of $\mathbb{F}_2[u, \tau, h_0]/(u^2)$ indexed along the generators of $B_1^{\mathbb{F}_q}(\tfrac{k}{2}-1)$ as an $\euscr{A}(0)^\vee$-comodule. In other words, since these summands are in the same stem as their $\mathbb{C}$-motivic counterpart \cite[Theorem 3.38]{CQ21}, we have an isomorphism
    \[\text{Ext}_{\euscr{A}(0)^\vee}^{*,*,*}(B_1^{\mathbb{F}_q}(\tfrac{k}{2}-1)) \cong \text{Ext}^{*,*,*}_{\euscr{A}(0)^\vee}(B_1^{\mathbb{C}}(\tfrac{k}{2}-1)) \otimes \mathbb{M}_2^{\mathbb{F}_q}.\]
    Altogether, we have
    \begin{align*}
    \text{Ext}^{*,*,*}_{\euscr{A}(1)^\vee}(B_0^{\mathbb{F}_q}(k)) \cong \left(\text{Ext}_{\euscr{A}(1)^\vee}^{*,*,*}(\Sigma^{2k, k}B_0^{\mathbb{C}}(\tfrac{k}{2})) \oplus \text{Ext}^{*,*,*}_{\euscr{A}(0)^\vee} (B_1^{\mathbb{C}}(\tfrac{k}{2}-1))\right) \otimes \mathbb{M}_2^{\mathbb{F}_q} \\
     \cong \text{Ext}_{\euscr{A}(1)^\vee}^{*,*,*}(B_0^\mathbb{C}(k)) \otimes \mathbb{M}_2^{\mathbb{F}_q}
    \end{align*}
    where the last equality is due to \cite[Theorem 5.5]{Realkqcoop}.

    Suppose that $k$ is odd. There is a short exact sequence of Brown--Gitler comodules
    \[0 \to \Sigma^{2(k-1), k-1}B_0^{\mathbb{F}_q}(\tfrac{k-1}{2}) \otimes B_0^{\mathbb{F}_q}(1) \to B_0^{\mathbb{F}_q}(k) \to B_1^{\mathbb{F}_q}(\tfrac{k-1}{2}-1) \otimes (\euscr{A}(1) \modmod \euscr{A}(0))^\vee \to 0.\]
    After applying $\text{Ext}_{\euscr{A}(1)^\vee}^{*,*,*}(-)$ and using a change of rings isomorphism on the cokernel, the same argument as given above implies that the connecting homomorphism in the long exact sequence of Ext groups is trivial modulo $v_1$-torsion. Thus, we only need to understand the Ext group of the kernel and the Ext group of the cokernel. The Ext group of the cokernel follows from the same style of computation as in the previous case, giving an isomorphism
    \[\text{Ext}^{*,*,*}_{\euscr{A}(0)^\vee}(B_1^{\mathbb{F}_q}(\tfrac{k-1}{2}-1)) \cong \text{Ext}^{*,*,*}_{\euscr{A}(0)^\vee}(B_1^{\mathbb{C}}(\tfrac{k-1}{2}-1)) \otimes \mathbb{M}_2^{\mathbb{F}_q}.\]
    The Ext group of the kernel may be calculated by another algebraic Atiyah--Hirzebruch spectral sequence. By applying the functor $\text{Ext}_{\euscr{A}(1)^\vee}^{*,*,*}(B_0^{\mathbb{F}_q}(\tfrac{k-1}{2}) \otimes -)$ to the cellular filtration on $B_0^{\mathbb{F}_q}(1)$, we obtain a spectral sequence with signature
    \[\mathrm{E}^{s,f,w,a}_1 = \text{Ext}^{s,f,w}_{\euscr{A}(1)^\vee}(B_0^{\mathbb{F}_q}(\tfrac{k-1}{2})) \otimes \mathbb{M}_2^{\mathbb{F}_q}\{[1], [\overline{\xi}_1], [\overline{\tau}_1]\} \implies \text{Ext}^{s,f,w}_{\euscr{A}(1)^\vee}(B_0^{\mathbb{F}_q}(\tfrac{k-1}{2}) \otimes B_0^{\mathbb{F}_q}(1)).\]
    By the induction hypothesis, we have that
    \[\text{Ext}^{*,*,*}_{\euscr{A}(1)^\vee}(B_0^{\mathbb{F}_q}(\tfrac{k-1}{2})) \cong \text{Ext}^{*,*,*}_{\euscr{A}(1)^\vee}(B_0^{\mathbb{C}}(\tfrac{k-1}{2})) \otimes \mathbb{M}_2^{\mathbb{F}_q}.\]
    The differentials are induced by the cellular filtration on $B_0^{\mathbb{F}_q}(1)$, so by the same argument as given in \Cref{prop:easyExtB01powers}, the differentials are the same as the $\mathbb{C}$-motivic case determined in \cite[Theorem 3.38]{CQ21} and there is no room for a $d_3$-differential. Thus the abutment is isomorphic to
    \[\text{Ext}_{\euscr{A}(1)^\vee}^{*,*,*}(B_0^{\mathbb{C}}(\tfrac{k-1}{2}) \otimes B_0^{\mathbb{C}}(1)) \otimes \mathbb{M}_2^{\mathbb{F}_q}.\]
    Therefore, we have
    \begin{align*}
        &\text{Ext}^{*,*,*}_{\euscr{A}(1)^\vee}(B_0^{\mathbb{F}_q}(k))  \\
         & \cong \left( \text{Ext}^{*,*,*}_{\euscr{A}^\vee}(\Sigma^{2(k-1), (k-1)}B_0^{\mathbb{C}}(\tfrac{k}{2}-1) \otimes B_0^{\mathbb{C}}(1)) \oplus \text{Ext}^{*,*,*}_{\euscr{A}(0)^\vee}(B_1(\tfrac{k-1}{2}-1))\right) \otimes \mathbb{M}_2^{\mathbb{F}_q}\\
         &\cong \text{Ext}^{*,*,*}_{\euscr{A}(1)^\vee}(B_0^{\mathbb{C}}(k)) \otimes \mathbb{M}_2^{\mathbb{F}_q}
    \end{align*}
    where the last equivalence is again due to \cite[Theorem 5.5]{Realkqcoop}.
\end{proof}

We can be more descriptive, but we must introduce some notation. For $M$ an $(s,f,w)$-trigraded module (such as an Ext group over the dual Steenrod algebra), let $\tau_{\geq n}M$ denote the truncation in the $s$-degree, so that 
\[
\tau_{\geq n}M \cong \left\{\begin{array}{lr}
    M &  s \geq n\\
    0 & \text{else}.
\end{array} \right.
\]
Let $\text{E} \in \text{SH}(\mathbb{F}_q)$ be any motivic spectrum. Whenever E has a factor of $(\euscr{A} \modmod \euscr{A}(1))^\vee$ in its mod-2 homology, we will use the notation $\text{Ext}^{*,*,*}_{\euscr{A}(1)^\vee}(\text{E})$ for the $\mathrm{E}_2$-page of the $\textbf{mASS}^{\mathbb{F}_q}(\text{E}).$ Let $\text{E}^{\langle n \rangle}$ denote the $n^{th}$ Adams cover of E, which is the $n^{th}$ term in a minimal $\text{H}\mathbb{F}_2$-Adams resolution for E.  Recall that a minimal Adams resolution is a diagram of the form \cite[Definition 2.1.3]{Rav86}
\[\begin{tikzcd}
	{\text{E}^{\langle 0 \rangle} :=\text{E}} & {\text{E}^{\langle 1 \rangle}} & {\text{E}^{\langle 2 \rangle}} & \cdots \\
	{\text{K}_0} & {\text{K}_1} & {\text{K}_2}
	\arrow[from=1-1, to=2-1]
	\arrow[from=1-2, to=1-1]
	\arrow[from=1-2, to=2-2]
	\arrow[from=1-3, to=1-2]
	\arrow[from=1-3, to=2-3]
	\arrow[from=1-4, to=1-3]
	\arrow[dashed, from=2-1, to=1-2]
	\arrow[dashed, from=2-2, to=1-3]
	\arrow[dashed, from=2-3, to=1-4]
\end{tikzcd}\]
and applying homotopy groups to this diagram gives the $\textbf{mASS}^{\mathbb{F}_q}(\text{E})$. Here $\text{K}_n := \bigoplus_B\text{H}\mathbb{F}_2$ for a basis $B$ of the homology group $\text{H}_{*,*}(\text{E}^{\langle n \rangle})$, and $\text{E}^{\langle n\rangle}$ is the fiber of the map $\text{E}^{\langle n-1\rangle} \to \text{K}_{n-1}$, defined inductively by taking $\text{E}^{\langle 0 \rangle} := \text{E}$. The group $\text{Ext}^{*,*,*}_{\euscr{A}^\vee}(\text{E}^{\langle n \rangle})$ can be described as
\[
\text{Ext}_{\euscr{A}^\vee}^{s,f,w}({\text{E}^{\langle n \rangle})) \cong \left\{\begin{array}{rl}
    \text{Ext}_{\euscr{A}^\vee}^{s,f+n, w}(\text{E)} & f \geq 0 \\
    0 & f<0.
\end{array} \right.}
\]
One may interpret this Ext group in charts by taking the charts for $\text{Ext}_{\euscr{A}^\vee}^{*,*,*}(\text{E})$, relabeling the filtration $f=n$-line as $f=0$, and then omitting anything below this new filtration $f=0$ line. In other words, one slides the $s$-axis up to filtration $f=n$, relabels, and truncates all information in lower filtration.

Notice that $\tau_{\geq n}(\text{Ext}_{\euscr{A}(1)^\vee}^{*,*,*}(\text{kq}^{\langle n\rangle}))$ and $\tau_{\geq n}(\text{Ext}^{*,*,*}_{\euscr{A}(1)^\vee}(\text{ksp}^{\langle n \rangle}))$ have isolated non-nilpotent $h_1$-towers which are not connected to an $h_0$-tower. In these cases, we will use the modified truncation $\tau_{\geq n}^{h_1}(-)$ to indicate taking the truncation and then omitting out these isolated $h_1$-towers. As an example, we depict $\tau_{\geq 4}(\text{Ext}_{\euscr{A}(1)^\vee}^{*,*,*}(\text{kq}^{\langle 2 \rangle}))$ in \Cref{fig:f5_truncation_kqadamscover2} and $\tau_{\geq 4}^{h_1}(\text{Ext}_{\euscr{A}(1)^\vee}^{*,*,*}(\text{kq}^{\langle 2 \rangle}))$ in \Cref{fig:f5_h1truncation_kqadamscover2}.

\begin{figure}
    \centering
    \includegraphics[scale=.75]{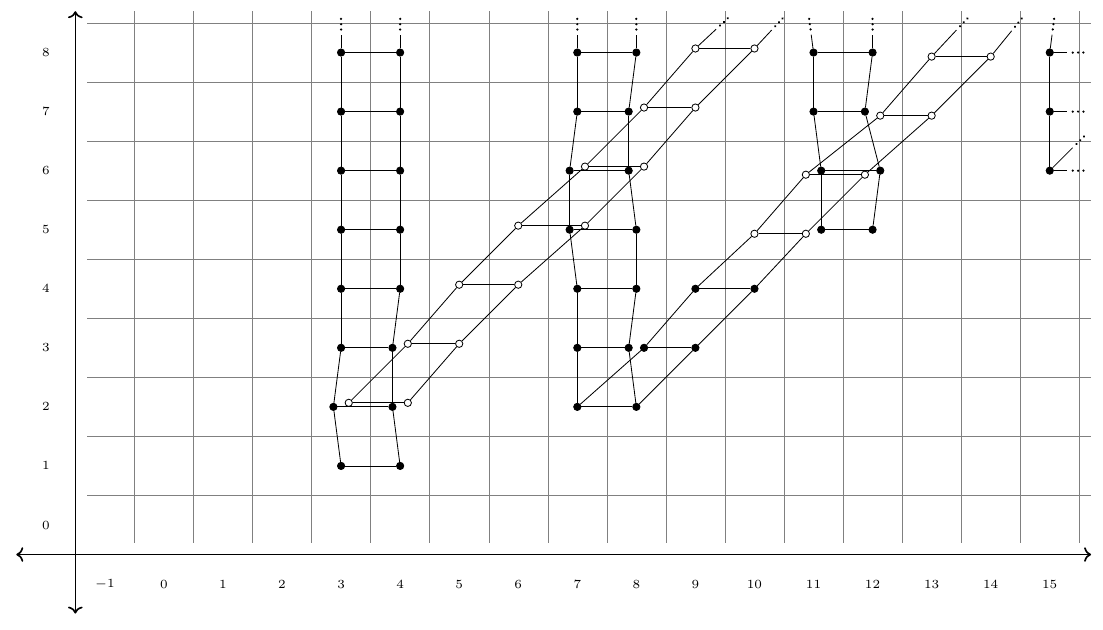}
    \caption{$\tau_{\geq 4}(\text{Ext}_{\euscr{A}(1)^\vee}^{*,*,*}(\text{kq}^{\langle 2 \rangle}))$ for $q \equiv 1 \, (4)$ with an isolated $h_1$-tower.}
    \label{fig:f5_truncation_kqadamscover2}
\end{figure}
\hfill 

\begin{figure}
    \centering
    \includegraphics[scale=.75]{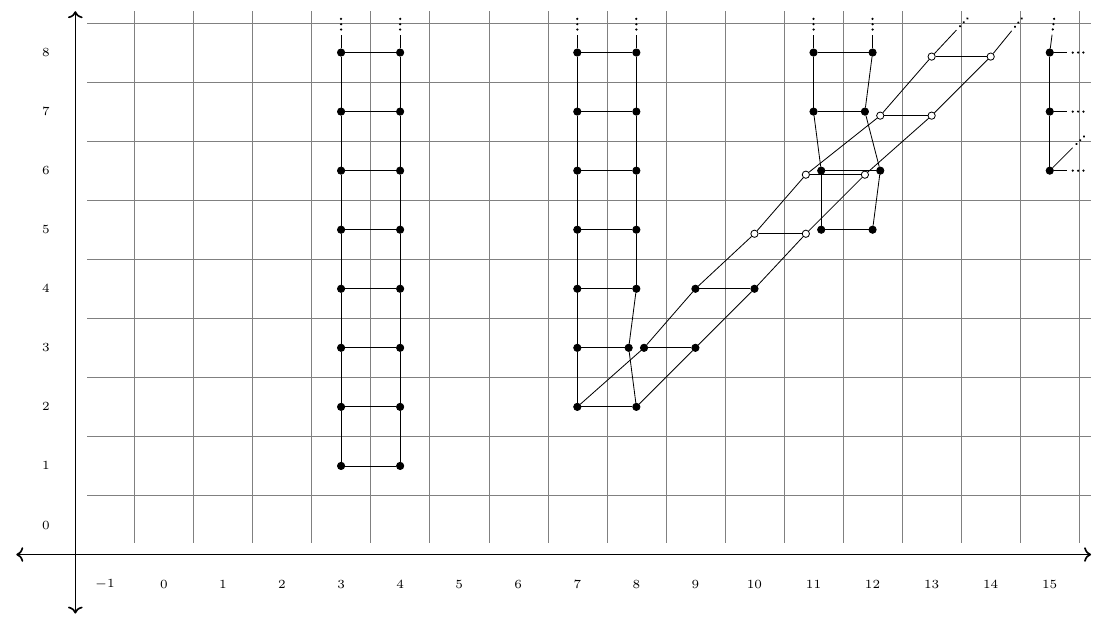}
    \caption{$\tau_{\geq 4}^{h_1}(\text{Ext}_{\euscr{A}(1)^\vee}^{*,*,*}(\text{kq}^{\langle 2 \rangle}))$ for $q \equiv 1 \, (4)$ with no isolated $h_1$-towers.}
    \label{fig:f5_h1truncation_kqadamscover2}
\end{figure}

\begin{definition}
\label{def:F5_Zi}
    For $i \geq 0$, let $Z_i^{\mathbb{C}}$ be the  $\text{Ext}^{*,*,*}_{\euscr{A}(1)^\vee}(\mathbb{M}_2^{\mathbb{C}})$-module defined as follows.
    
    For $i \equiv 0 \, (4)$, let $Z_i^\mathbb{C}$ be
    \[\tau^{h_1}_{\geq 2i}\left(\text{Ext}_{\euscr{A}(1)^\vee}^{*,*,*}(\text{kq}^{\langle i \rangle})\right) \oplus \bigoplus_{j=0}^{i/2-1}\Sigma^{4j,2j}\text{Ext}_{\euscr{A}(0)^\vee}^{*,*,*}(\mathbb{M}_2^{\mathbb{C}}).\]
    
    For $i \equiv 1 \, (4)$, let $Z_i^\mathbb{C}$ be
    \[\tau^{h_1}_{\geq 2i-2}\left(\text{Ext}_{\euscr{A}(1)^\vee}^{*,*,*}(\text{ksp}^{\langle i-1 \rangle}) \right) \oplus \bigoplus_{j=0}^{(i-1)/2-1}\Sigma^{4j,2j}\text{Ext}_{\euscr{A}(0)^\vee}^{*,*,*}(\mathbb{M}_2^{\mathbb{C}}).\]
    
    For $i \equiv 2 \, (4)$, let $Z_i^\mathbb{C}$ be
    \[\tau^{h_1}_{\geq 2i}\left(\text{Ext}_{\euscr{A}(1)^\vee}^{*,*,*}(\text{kq}^{\langle i \rangle})\right) \oplus \bigoplus_{j=0}^{i/2-1}\Sigma^{4j,2j}\text{Ext}_{\euscr{A}(0)^\vee}^{*,*,*}(\mathbb{M}_2^{\mathbb{C}}) \oplus \Sigma^{2i-2, i}\mathbb{F}_2[\tau]. \]
   
    For $i \equiv 3 \, (4)$, let $Z_i^\mathbb{C}$ be
    \[\tau^{h_1}_{\geq 2i-2}\left(\text{Ext}_{\euscr{A}(1)^\vee}^{*,*,*}(\text{ksp}^{\langle i-1 \rangle})\right) \oplus \bigoplus_{j=0}^{(i-1)/2-1}\Sigma^{4j,2j}\text{Ext}_{\euscr{A}(0)^\vee}^{*,*,*}(\mathbb{M}_2^{\mathbb{C}}) \oplus \Sigma^{2i-3, i-2}\mathbb{F}_2[\tau, h_1]/(h_1^2). \]  
\end{definition}

\begin{corollary}
\label{cor:f5ExtB0kDescriptive}
    Let $q \equiv 1 \, (4)$. For all $k \geq 0$, there is an isomorphism:
    \[\frac{\textup{Ext}^{*,*,*}_{\euscr{A}(1)^\vee}(B_0^{\mathbb{F}_q}(k))}{v_1\textup{-torsion}} \cong \left(\Sigma^{4k-4, 2k-2}Z_{\alpha(k)}^{\mathbb{C}} \oplus \bigoplus_{j =0}^{k-2}\mathbb{M}_2^{\mathbb{C}}[h_0]\right) \otimes_{\mathbb{M}_2^{\mathbb{C}}} \mathbb{M}_2^{\mathbb{F}_q},\]
    where $\alpha(k)$ is the number of 1's in the dyadic expansion of $k$, and the right hand summand is taken to be empty in the case of $k=1$.
\end{corollary}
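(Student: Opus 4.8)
The plan is to obtain this descriptive statement as a formal consequence of \Cref{thm:easyExtB0k} together with the explicit $\mathbb{C}$-motivic decomposition established in \cite{Realkqcoop}. \Cref{thm:easyExtB0k} already supplies the abstract isomorphism
\[\frac{\textup{Ext}^{***}_{\euscr{A}(1)^\vee}(B_0^{\mathbb{F}_q}(k))}{v_1\textup{-torsion}} \cong \frac{\textup{Ext}^{***}_{\euscr{A}(1)^\vee}(B_0^{\mathbb{C}}(k))}{v_1\textup{-torsion}} \otimes \mathbb{M}_2^{\mathbb{F}_q},\]
so the only remaining work is to insert the known explicit form of the $\mathbb{C}$-motivic factor and to verify that the indicated base change behaves as claimed. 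No new spectral-sequence input is required, since all differentials, connecting maps, and hidden extensions have already been settled in the proof of \Cref{thm:easyExtB0k}.

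First I would recall the explicit $\mathbb{C}$-motivic decomposition
\[\frac{\textup{Ext}^{***}_{\euscr{A}(1)^\vee}(B_0^{\mathbb{C}}(k))}{v_1\textup{-torsion}} \cong \Sigma^{4k-4,2k-2}Z_{\alpha(k)}^{\mathbb{C}} \oplus \bigoplus_{j=0}^{k-2}\mathbb{M}_2^{\mathbb{C}}[h_0],\]
which is precisely what the modules $Z_i^{\mathbb{C}}$ of \Cref{def:F5_Zi} are designed to package: the shifted summand $\Sigma^{4k-4,2k-2}Z^{\mathbb{C}}_{\alpha(k)}$ records the non-$v_1$-torsion staircase assembled from the Adams covers of kq and ksp (with the isolated $h_1$-towers stripped off by the $\tau^{h_1}_{\geq n}$ truncations), while the $h_0$-towers $\bigoplus_{j=0}^{k-2}\mathbb{M}_2^{\mathbb{C}}[h_0]$ record the contributions of the $\euscr{A}(0)^\vee$-cokernels $B_1^{\mathbb{C}}$ arising from the short exact sequences of \Cref{prop:ses bg}. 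Substituting this into the isomorphism above, and then distributing $(-)\otimes\mathbb{M}_2^{\mathbb{F}_q}$ across the direct sum and past the suspension $\Sigma^{4k-4,2k-2}$ (which only shifts tridegrees), yields the right-hand side of the claim verbatim, with the convention that the $h_0$-tower summand is empty for $k=1$.

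The only point requiring genuine care---and the closest thing to an obstacle---is to confirm that the implicit tensor product appearing in \Cref{thm:easyExtB0k} really is base change along $\mathbb{M}_2^{\mathbb{C}} \to \mathbb{M}_2^{\mathbb{F}_q}$, that is, $(-)\otimes_{\mathbb{M}_2^{\mathbb{C}}}\mathbb{M}_2^{\mathbb{F}_q}$. For $q \equiv 1\,(4)$ we have $\mathbb{M}_2^{\mathbb{C}} = \mathbb{F}_2[\tau]$ and $\mathbb{M}_2^{\mathbb{F}_q} = \mathbb{F}_2[u,\tau]/(u^2)$, so $\mathbb{M}_2^{\mathbb{F}_q}$ is free of rank two over $\mathbb{M}_2^{\mathbb{C}}$ on the basis $\{1,u\}$. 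Consequently this base change is exact, commutes with direct sums and suspensions, and simply adjoins a $u$-multiple of each $\mathbb{M}_2^{\mathbb{C}}$-generator; this is exactly the identification already recorded in \Cref{eq:ExtA1F5}. Once these bookkeeping points are checked, the corollary follows immediately by combining the two displays.
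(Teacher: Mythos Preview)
Your proposal is correct and follows essentially the same approach as the paper: invoke the explicit $\mathbb{C}$-motivic decomposition from \cite[Theorem 5.6]{Realkqcoop} and combine it with \Cref{thm:easyExtB0k}. The paper's proof is just these two steps in two sentences; your additional remarks on why the tensor product is base change over $\mathbb{M}_2^{\mathbb{C}}$ are sound but not strictly needed for the argument.
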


\begin{proof}
    By \cite[Theorem 5.5]{Realkqcoop}, there is an isomorphism
    \[\frac{\textup{Ext}^{*,*,*}_{\euscr{A}(1)^\vee}(B_0^{\mathbb{C}}(k))}{v_1\textup{-torsion}} \cong \Sigma^{4k-4, 2k-2}Z_{\alpha(k)}^{\mathbb{C}} \oplus \bigoplus_{j =0}^{k-2}\mathbb{M}_2^{\mathbb{C}}[h_0] .\]
    The result then follows by \cref{thm:easyExtB0k}
\end{proof}

\begin{remark}
    The notation in \Cref{def:F5_Zi}, although equivalent, differs from the notation in \cite[Section 4]{Realkqcoop}. We hope that this presentation in terms of Adams covers is more well suited for generalization.
\end{remark}

\begin{remark}
\label{remark:FailureForSpectrumLevelSplitting}
    As is true over $\mathbb{C}$ and $\mathbb{R}$, it is in this computation that we deviate from the classical story. Mahowald is able to express the groups $\text{Ext}^{*,*}_{\euscr{A}(1)^\vee}(B_0(1)^{\otimes i})$ solely in terms of Adams covers of bo and bsp \cite{LM87}. The analogous statement in motivic homotopy is not true. If one were to express $\text{Ext}^{*,*,*}_{\euscr{A}(1)^\vee}(B_0^{\mathbb{F}_q}(1)^{\otimes i})$ in terms of Adams covers of kq or ksp, then one would see summands of non-nilpotent $h_1$-towers appearing in the formulae for $Z_i^\mathbb{C}$, reflecting that $\eta$ is non-nilpotent in $\pi_{*,*}^{\mathbb{F}_q}(\text{kq}^{\langle n \rangle})$ and $\pi_{*,*}^{\mathbb{F}_q}(\text{ksp}^{\langle n \rangle})$. However, these formulas do not allow for this, and indeed one can see through the \textbf{aAHSS} that such $h_1$-towers cannot be obtained. This is the reason behind the sophisticated truncation functors we introduced in the formulae given for $Z_i^\mathbb{C}$.
\end{remark}
\subsection{The ring of cooperations}
We now assemble our findings to compute $\pi_{*,*}^{\mathbb{F}_q}(\text{kq} \otimes \text{kq})$. First, we describe the $\textup{E}_2$-page of the $\textbf{mASS}^{\mathbb{F}_q}(\text{kq} \otimes \text{kq})$.

\begin{corollary}
    Let $q \equiv 1 \, (4)$. The $\textup{E}_2$-page of the $\textup{\textbf{mASS}}^{\mathbb{F}_q}(\textup{kq} \otimes \textup{kq})$, modulo $v_1$-torsion, is given by
    \begin{align*}
    &\bigoplus_{k \geq 0}\textup{Ext}_{\euscr{A}(1)^\vee}^{*,*,*}(\Sigma^{4k, 2k}B_0^{\mathbb{F}_q}(k)) \\ &\cong \bigoplus_{k \geq 0}\Sigma^{4k, 2k}\left(\Sigma^{4k-4, 2k-2}Z_{\alpha(k)}^{\mathbb{C}} \oplus \bigoplus_{j =0}^{4k-8}\Sigma^{4j, 2j}\mathbb{M}_2^{\mathbb{C}}[h_0]\right) \otimes \mathbb{M}_2^{\mathbb{F}_q}.\end{align*}
\end{corollary}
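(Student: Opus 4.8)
The plan is to combine the two structural results already in hand; the corollary is a direct substitution, so no genuinely new input is required and the work is purely organizational. First I would invoke \Cref{thm:e2 mass kq coop}, which identifies the $E_2$-page of the $\textbf{mASS}^{\mathbb{F}_q}(\text{kq} \otimes \text{kq})$ with
\[
\bigoplus_{k \geq 0}\textup{Ext}_{\euscr{A}(1)^\vee}^{***}(\Sigma^{4k, 2k}B_0^{\mathbb{F}_q}(k)),
\]
yielding the first equality. This identification holds before quotienting, so I would state it integrally and only afterward pass to the quotient by $v_1$-torsion.

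For the second isomorphism I would apply \Cref{cor:f5ExtB0kDescriptive} summand by summand. The key formal points are that the external suspension $\Sigma^{4k,2k}$ commutes with the functor $\textup{Ext}_{\euscr{A}(1)^\vee}^{***}(-)$ (it merely shifts the internal $(s,f,w)$-gradings) and with the operation of killing $v_1$-torsion, so it can be pulled outside; and that the base change $\otimes_{\mathbb{M}_2^{\mathbb{C}}}\mathbb{M}_2^{\mathbb{F}_q}$, together with the direct sums and suspensions appearing in \Cref{cor:f5ExtB0kDescriptive}, all commute because $\mathbb{M}_2^{\mathbb{F}_q}$ is free over $\mathbb{M}_2^{\mathbb{C}}$. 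Substituting the description of $\textup{Ext}_{\euscr{A}(1)^\vee}^{***}(B_0^{\mathbb{F}_q}(k))$ modulo $v_1$-torsion into each term and reassembling over $k$ then produces the displayed right-hand side.

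The only point requiring care is the grading bookkeeping. One must check that the external shift $\Sigma^{4k,2k}$ composes correctly with the internal shift $\Sigma^{4k-4,2k-2}$ attached to the $Z_{\alpha(k)}^{\mathbb{C}}$-factor, and that the family of $\mathbb{M}_2^{\mathbb{C}}[h_0]$-summands is reindexed consistently once the external suspension is distributed across it. I expect this reindexing of the $h_0$-towers to be the sole place an off-by-one could arise; everything else follows formally from the additivity of $\textup{Ext}_{\euscr{A}(1)^\vee}^{***}(-)$ over direct sums and the flatness of $\mathbb{M}_2^{\mathbb{F}_q}$ over $\mathbb{M}_2^{\mathbb{C}}$.
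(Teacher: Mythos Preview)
Your proposal is correct and follows essentially the same approach as the paper: the paper's proof consists of exactly the two citations you identify, namely \Cref{thm:e2 mass kq coop} for the first equality and \Cref{cor:f5ExtB0kDescriptive} for the second. Your additional remarks on suspensions, base change, and grading bookkeeping are reasonable elaborations but are not needed for the paper's terse two-line argument.
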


\begin{proof}
    The first equivalence follows from \Cref{thm:e2 mass kq coop}, and the second follows from \Cref{cor:f5ExtB0kDescriptive}.
\end{proof}

Notice that this description of the $\text{E}_2$-page is as a module over the $\text{E}_2$-page of the $\textbf{mASS}^{\mathbb{F}_q}(\text{kq})$, that is, as a module over the algebra $\text{Ext}_{\euscr{A}(1)^\vee}^{*,*,*}(\mathbb{M}_2^{\mathbb{F}_q})$. We leverage this to determine the differentials in our spectral sequence.
\begin{thm}
\label{thm:kqsmashkqdifsF5}
    The differentials in the $\textup{\textbf{mASS}}^{\mathbb{F}_q}(\textup{kq} \otimes \textup{kq})$ are determined by the differentials in the $\textup{\textbf{mASS}}^{\mathbb{F}_q}(\textup{kq}).$
\end{thm}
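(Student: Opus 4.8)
The plan is to exploit the $\text{kq}$-module structure on $\text{kq} \otimes \text{kq}$ coming from multiplication on the left-hand factor. The action map $\text{kq} \otimes (\text{kq} \otimes \text{kq}) \to \text{kq} \otimes \text{kq}$ makes $\textbf{mASS}^{\mathbb{F}_q}(\text{kq} \otimes \text{kq})$ a module over $\textbf{mASS}^{\mathbb{F}_q}(\text{kq})$, so that the Leibniz rule $d_r(\lambda x) = d_r(\lambda)\, x + \lambda\, d_r(x)$ holds whenever $\lambda$ is a class in the $\text{kq}$-spectral sequence and $x$ is a class in the cooperations spectral sequence. Now \Cref{thm:e2 mass kq coop} together with \Cref{cor:f5ExtB0kDescriptive} exhibits the entire $E_2$-page, modulo $v_1$-torsion, as a module over $\text{Ext}_{\euscr{A}(1)^\vee}^{***}(\mathbb{M}_2^{\mathbb{F}_q})$ generated by the generators of the summands $\Sigma^{4k-4,2k-2}Z_{\alpha(k)}^{\mathbb{C}} \otimes \mathbb{M}_2^{\mathbb{F}_q}$ and $\mathbb{M}_2^{\mathbb{C}}[h_0] \otimes \mathbb{M}_2^{\mathbb{F}_q}$. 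It therefore suffices to determine the differential supported by each module generator; the Leibniz rule then propagates these to all of $E_2$.

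The second step is to recognize each summand as a truncation of the $E_2$-page of a motivic Adams spectral sequence whose differentials are already known. By \Cref{def:F5_Zi}, the modules $Z_{\alpha(k)}^{\mathbb{C}}$ are built from the truncated Ext groups of the Adams covers $\text{kq}^{\langle i \rangle}$ and $\text{ksp}^{\langle i \rangle}$ together with copies of $\text{Ext}_{\euscr{A}(0)^\vee}^{***}(\mathbb{M}_2^{\mathbb{C}})$, and the remaining $\mathbb{M}_2^{\mathbb{C}}[h_0]$ summands are of $\text{H}\mathbb{Z}$-type. Passing to an Adams cover only raises the Adams filtration and leaves the higher differentials unchanged, so the differentials on the $\text{kq}^{\langle i \rangle}$- and $\text{ksp}^{\langle i \rangle}$-pieces coincide with those of $\textbf{mASS}^{\mathbb{F}_q}(\text{kq})$ and $\textbf{mASS}^{\mathbb{F}_q}(\text{ksp})$. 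By \Cref{lem:kspmASSDifsEasy} the ksp-differentials are themselves determined by those of $\textbf{mASS}^{\mathbb{F}_q}(\text{kq})$ and $\textbf{mASS}^{\mathbb{F}_q}(\text{H}\mathbb{Z})$, and by \Cref{prop:EasyHZDifsLiftTokq} the $\text{H}\mathbb{Z}$-differentials of \Cref{F5 difs mass HZ} lift along the map $\text{kq} \to \text{H}\mathbb{Z}$. Hence each generator is either a permanent cycle or supports a $\tau$-power differential that is ultimately governed by the $\tau$-differentials in $\textbf{mASS}^{\mathbb{F}_q}(\text{kq})$.

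With the generators accounted for, the third step is largely bookkeeping. Because the $\mathbb{C}$-motivic cooperations spectral sequence collapses at $E_2$, every class not arising from a $\tau$-divisible element is forced to be a permanent cycle, which sharply constrains the possibilities. Using that Adams differentials preserve the motivic weight and lower the stem by one, one checks that the only nonzero differentials on generators are the expected $\tau$-power differentials, and the Leibniz rule then determines every remaining differential in the spectral sequence.

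The principal difficulty, highlighted in \Cref{remark:FailureForSpectrumLevelSplitting}, is that the decomposition of the $E_2$-page is purely algebraic: there is no spectrum-level splitting of $\text{kq} \otimes \text{kq}$ into the relevant covers of $\text{kq}$, $\text{ksp}$, and $\text{H}\mathbb{Z}$. One therefore cannot appeal to a genuine direct sum of known spectral sequences, and must instead rule out differentials that cross between the algebraic summands or that are obscured by the truncation functors $\tau_{\geq n}^{h_1}$. The key will be to show that any such crossing differential is incompatible with the $\text{Ext}_{\euscr{A}(1)^\vee}^{***}(\mathbb{M}_2^{\mathbb{F}_q})$-module structure---in particular with $h_0$- and $h_1$-linearity and the relation $h_0 h_1 = 0$ of \Cref{Ext_A(1)_m2_easy}---together with the weight and coweight constraints, so that module-theoretic propagation from the generators accounts for all the differentials.
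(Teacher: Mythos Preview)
Your overall strategy---use the $\text{Ext}_{\euscr{A}(1)^\vee}^{***}(\mathbb{M}_2^{\mathbb{F}_q})$-module structure, show the module generators are permanent cycles, then propagate via the Leibniz rule---is exactly the paper's approach. However, the paper's execution is considerably more direct than your second paragraph suggests. Rather than identifying each summand with a known spectral sequence (which, as you correctly observe in your fourth paragraph, is not justified absent a spectrum-level splitting), the paper simply argues that each module generator $x$ is a permanent cycle for weight reasons: every class in stem $s-1$ is of the form $u\tau^n h_0^m$ and hence has motivic weight at least one lower than $x$, so $d_r(x)=0$ since Adams differentials preserve weight. The same weight check handles the generators of the $\mathbb{F}_2[u,\tau,h_0]/(u^2)$ summands, and the $\mathbb{F}_2[\tau]$ and $\mathbb{F}_2[\tau,h_1]/(h_1^2)$ summands are ruled out by the same argument used for $h_1$ in \Cref{prop:EasyHZDifsLiftTokq}. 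Once the generators are permanent cycles, the Leibniz rule together with the known differentials on $\tau^i$ from the $\textbf{mASS}^{\mathbb{F}_q}(\text{kq})$ finishes the proof.

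In short, your second paragraph is a detour: the Adams-cover identification is suggestive but not load-bearing, and your own fourth paragraph already points to the correct fix. Drop the attempt to match summands with separate spectral sequences and instead carry out the elementary weight argument on generators directly; this dissolves the ``crossing differential'' worry without any further case analysis.
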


\begin{proof}
    The inclusion map $\text{kq} \to \text{kq} \otimes \text{kq}$ induces a map of $\text{E}_2$-pages which pushes the differentials of \Cref{prop:EasyHZDifsLiftTokq} to the $k=0$ summand of $\text{Ext}^{*,*,*}_{\euscr{A}(1)^\vee}(\text{kq} \otimes \text{kq})$. The $\text{Ext}_{\euscr{A}(1)^\vee}^{*,*,*}(\text{kq})$-module structure of $\text{Ext}^{*,*,*}_{\euscr{A}(1)^\vee}(\text{kq} \otimes \text{kq})$ and degree considerations lift all of the other differentials. 
    
    To be precise, each summand of the $\text{E}_2$-page contains a submodule which is isomorphic to (a shift of) either $\text{Ext}^{*,*,*}_{\euscr{A}(1)^\vee}(\text{kq})$ or $\text{Ext}^{*,*,*}_{\euscr{A}(1)^\vee}(\text{ksp})$. This is because each summand contains either $\text{Ext}^{*,*,*}_{\euscr{A}(1)^\vee}(\mathrm{kq}^{\langle k \rangle})$ or $\text{Ext}^{*,*,*}_{\euscr{A}(1)^\vee}(\mathrm{ksp}^{\langle k \rangle})$ by construction in \Cref{def:F5_Zi}, and each Adams cover contains a submodule isomorphic to $\text{Ext}_{\euscr{A}(1)^\vee}^{*,*,*}(\mathrm{kq})$ or $\text{Ext}_{\euscr{A}(1)^\vee}^{*,*,*}(\mathrm{ksp})$ in far enough stem by the periodicity of the corresponding Ext groups. For example, the submodule in stems $s \geq 8$ of \Cref{fig:f5_h1truncation_kqadamscover2} is isomorphic to $\text{Ext}_{\euscr{A}(1)^\vee}^{*,*,*}(\mathrm{kq}).$ Let $x$ be a generator for this submodule. Then $d_r(x)=0$ for degree reasons for all $r$. This is because all potential targets have motivic weight at least 1 lower than $x$ as they are classes of the form $u\tau^nh_0^m$ for some $m, n \geq 0$, while Adams differentials preserve motivic weight weight. This is the same reason as why $d_r(1) = 0$ for all $r$ in the $\textbf{mASS}^{\mathbb{F}_q}(\text{kq})$ or the $\textbf{mASS}^{\mathbb{F}_q}(\text{ksp})$. The rest of the differentials on this submodule are now determined by the Liebniz rule and the underlying differentials in the $\textbf{mASS}^{\mathbb{F}_q}(\mathrm{kq})$, following the proof of \Cref{prop:EasyHZDifsLiftTokq}.
    
    If $y$ is a generator for any submodule of the form $\mathbb{F}_2[u,\tau,h_0]/(u^2)$, then we must have that $d_r(y)=0$ for all $r$ as all classes in the prior stem have weight at least 1 lower than $y$. Thus, the Liebniz rule implies that all the differentials will be given by the differentials in the $\textbf{mASS}^{\mathbb{F}_q}(\text{H}\mathbb{Z})$. Finally there are no differentials on any ssummand of the form $\mathbb{F}_2[\tau]$ or $\mathbb{F}_2[\tau, h_1]/(h_1)^2$ for the same reason there is no differential on $h_1$ in the $\textbf{mASS}^{\mathbb{F}_q}(\text{kq})$, following the proof in \Cref{prop:EasyHZDifsLiftTokq}.
\end{proof}

\Cref{fig:kqsmashkqChartsF5} depicts a chart for the $\textbf{mASS}^{\mathbb{F}_q}(\text{kq} \otimes \text{kq})$, which may interpreted as either the $\mathrm{E}_2$ or $\mathrm{E}_\infty$-page. Different colors correspond to different summands of the $\text{E}_2$-page. For example, the $k=0$ summand is given by $\text{Ext}_{\euscr{A}(1)^\vee}^{*,*,*}(\text{kq})$. This is depicted in black, and starts in bidegree $(0,0)$. The $k=1$ summand is given by $\Sigma^{4,2}\text{Ext}_{\euscr{A}(1)^\vee}^{*,*,*}(\text{ksp})$. This is depicted in blue, and starts in bidegree $(4,0)$. For the $\text{E}_2$-page, let a $\blacksquare$ of any color represent $\mathbb{F}_2[u, \tau, h_0]/(u^2)$, let a $\bullet$ of any color denote $\mathbb{F}_2[\tau, u]/(u^2)$, and let a $\circ$ of any color denote $\mathbb{F}_2[u]/(u^2)$. For the $\text{E}_\infty$-page, let a $\blacksquare$ of any color represent $\text{Ext}_{\euscr{A}(0)^\vee}^{*,*,*}(\mathbb{M}_2^{\mathbb{F}_q})$ (i.e. the $\mathrm{E}_\infty$-page of the $\textbf{mASS}^{\mathbb{F}_q}(\mathrm{H}\mathbb{Z})$), and let a $\bullet$ or $\circ$ of any color be the same as on the $\mathrm{E}_2$-page. A line of slope 1 indicates $h_1$-multiplication.

\begin{figure}
    \centering
    \includegraphics[scale=.75]{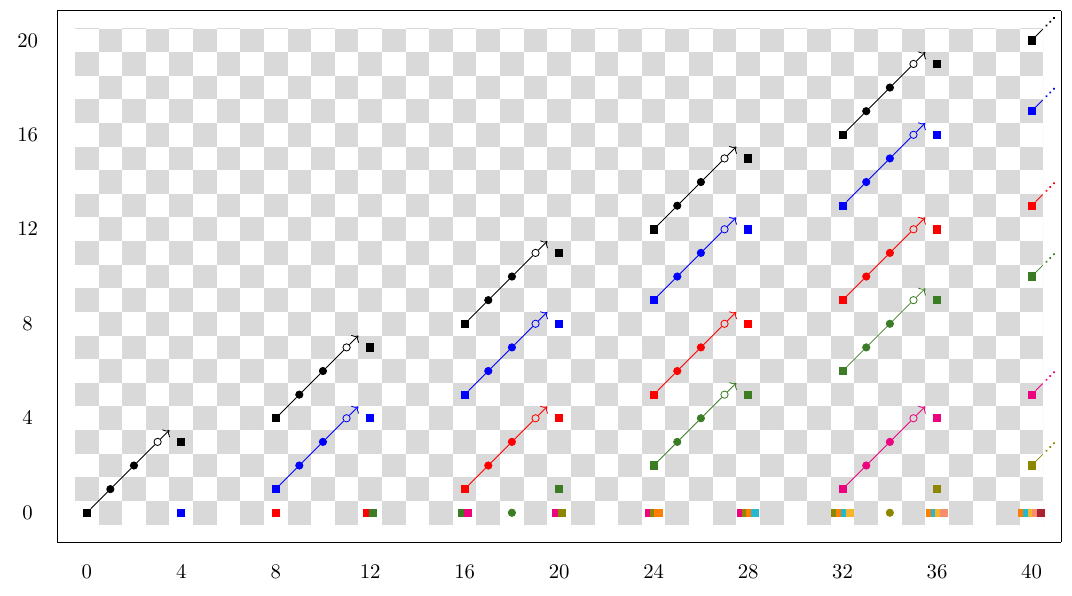}
    \caption{The $\textbf{mASS}^{\mathbb{F}_q}(\text{kq} \otimes \text{kq})$ for $q \equiv 1 \, (4).$}
    \label{fig:kqsmashkqChartsF5}
\end{figure}

\section{Nontrivial Bockstein action}
\label{section:nontrivial}
In this section, we compute the ring of cooperations $\pi_{*,*}^{\mathbb{F}_q}(\text{kq} \otimes \text{kq})$ in the case where there is a nontrivial Bockstein action, that is, when $q \equiv 3 \, (4)$. We follow the same process as was done in the previous section, and often refer to the previous section when we encounter identical proofs, but the Bockstein action makes the particular computations in Ext more complicated in this case. In particular, when computing over $\euscr{A}(1)^\vee$, we display our data in multiple charts organized by coweight.
\subsection{Integral homology}
We begin by computing the $\textbf{mASS}^{\mathbb{F}_q}(\text{H}\mathbb{Z})$. This spectral sequence has signature
\[\mathrm{E}^{s,f,w}_2 = \text{Ext}^{s,f,w}_{\euscr{A}^\vee}(\text{H}_{*,*}(\text{H}\mathbb{Z})) \implies \pi_{s,w}^{\mathbb{F}_q}(\text{H}\mathbb{Z}).\]
Recall that there is an isomorphism of $\euscr{A}^\vee$-comodules $\text{H}_{*,*}(\text{H}\mathbb{Z}) \cong (\euscr{A} \modmod \euscr{A}(0))^\vee$. This allows us to rewrite the $\mathrm{E}_2$-page of the $\textbf{mASS}^{\mathbb{F}_q}(\mathrm{H}\mathbb{Z})$ using a change of rings isomorphism as
\[\text{Ext}_{\euscr{A}^\vee}^{s,f,w}((\euscr{A} \modmod \euscr{A}(0))^\vee) \cong \text{Ext}_{\euscr{A}(0)^\vee}^{s,f,w}(\mathbb{M}_2^{\mathbb{F}_q}).\]
This Ext group was calculated by Kylling to be \cite[Theorem 4.1.3]{Kylingkqfinite}
\begin{equation}
\label{eq:ExtA0-F3}
\text{Ext}^{*,*,*}_{\euscr{A}(0)^\vee}(\mathbb{M}_2^{\mathbb{F}_q}) \cong \mathbb{F}_2[\rho, \tau^2, h_0, \rho\tau]/(\rho^2, \rho h_0,\rho(\rho\tau), (\rho\tau)^2),
\end{equation}
where $|h_0| = (1,0,0)$ and $\rho\tau$ is an indecomposable element. 

We depict this $\mathrm{E}_2$-page in \Cref{fig:f3_ext_a0}. 

\begin{remark}
    Although at first glance \Cref{fig:f3_ext_a0} looks nearly identical to \Cref{f5_Ext_A0}, it is much sparser in the motivic weight degree. For example, in stem 0 there are no classes of odd motivic weight, and in stem -1 the only classes of odd motivic weight are those of the form $\rho\tau^{2n}$. In particular, neither of the $h_0$ towers contain classes of odd motivic weight. To highlight the difference between these two sets of charts, we do not depict multiplication by $\rho\tau$.
\end{remark}

\begin{figure}
    \centering
    \includegraphics[scale=.75]{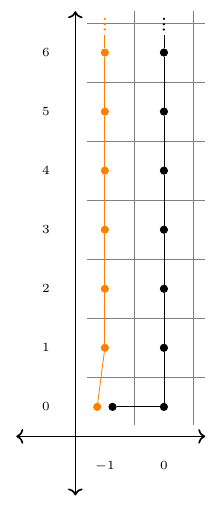}
    \caption{$\text{Ext}^{*,*,*}_{\euscr{A}(0)^\vee}(\mathbb{M}_2^{\mathbb{F}_q})$ for $q \equiv 3 \, (4).$ 
    }
    \label{fig:f3_ext_a0}
\end{figure}

We now determine all of the differentials in the spectral sequence.

\begin{lemma}[{\cite[Lemma 4.2.2]{Kylingkqfinite}}]
\label{lemma:f3HZdifs}
    Let $q \equiv 3 \, (4)$. The differentials in the $\textup{\textbf{mASS}}^{\mathbb{F}_q}(\textup{H}\mathbb{Z})$ are determined by
    \[d_{\nu_2(q^2-1)+i}(\tau^{2^i}) = \rho \tau^{2^i-1}h_0^{\nu_2(q^2-1)+i}.\]
\end{lemma}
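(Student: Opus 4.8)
The plan is to mirror the proof of \Cref{F5 difs mass HZ}, adapting it to the sparser weight structure imposed by the nontrivial Bockstein. First I would pin down the permanent cycles. Since $\pi_{0,0}^{\mathbb{F}_q}(\text{H}\mathbb{Z}) \cong \mathbb{Z}_2$, the entire $h_0$-tower in stem $0$ and weight $0$ must survive, so $1$ and all powers of $h_0$ are permanent cycles. Likewise $\rho$ is a permanent cycle: it lies in stem $-1$, so any differential on it would land in stem $-2$, where \Cref{eq:ExtA0-F3} is zero, and it cannot be hit since there are no classes of weight $-1$ in stem $0$. Inspecting \Cref{eq:ExtA0-F3} (see also \Cref{fig:f3_ext_a0}), the $E_2$-page is concentrated in stems $0$ and $-1$: stem $0$ carries the $h_0$-tower on the even powers $\tau^{2k}$, while stem $-1$ carries the isolated filtration-$0$ classes $\rho\tau^{2k}$ in odd weight together with the $h_0$-tower on $\rho\tau^{2k+1} = (\rho\tau)\tau^{2k}$ in even weight. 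Consequently every nonzero differential must originate on a $\tau$-divisible class $\tau^{2k}h_0^j$ in stem $0$ and terminate on one of the even-weight $h_0$-towers in stem $-1$.

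Next I would reduce to the differentials on the $2$-power classes $\tau^{2^i}$. Because $h_0$ and $\rho$ are permanent cycles, the Leibniz rule gives $d_r(\tau^{2k}h_0^j y) = h_0^j y\, d_r(\tau^{2k})$ for $y \in \{1, \rho\}$, so all differentials are determined by their values on the powers of $\tau^2$. Moreover, since we are in characteristic $2$, any square $\tau^{4m} = (\tau^{2m})^2$ is a cycle for whichever differential is supported by $\tau^{2m}$; iterating this produces the familiar tower in which $\tau^2$ supports the shortest differential, $\tau^4$ the next, and in general $\tau^{2^i}$ supports the $i$-th differential, with Leibniz filling in the intermediate even powers. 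This is exactly the bookkeeping in the proof of \Cref{F5 difs mass HZ}, and it reduces the statement to determining the length and target of the differential supported by each $\tau^{2^i}$.

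Finally I would fix these lengths by comparison with homotopy. Since $\pi_{0,-n}^{\mathbb{F}_q}(\text{H}\mathbb{Z}) = 0$ for $n \geq 1$, every $\tau$-divisible class in stem $0$ must die, forcing each $\tau^{2^i}$ to support a nonzero differential. As Adams differentials preserve the motivic weight, the target of $d(\tau^{2^i})$ lies in stem $-1$ and weight $-2^i$, hence on the even-weight $h_0$-tower generated by $\rho\tau^{2^i-1} = (\rho\tau)\tau^{2^i-2}$. The surviving length of this tower on the $E_\infty$-page must reproduce $\pi_{-1,-2^i}^{\mathbb{F}_q}(\text{H}\mathbb{Z}) \cong \mathbb{Z}/(q^{2^i}-1)_2$ from \Cref{eq:MotivicCohomology}; computing the $2$-adic valuation $\nu_2(q^{2^i}-1)$ by the lifting-the-exponent lemma and matching it against the truncation point of the $h_0$-tower then pins down the exact power of $h_0$ appearing in the target, giving the stated formula. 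The main obstacle is organizational rather than conceptual: unlike the $q \equiv 1\,(4)$ case, the odd powers of $\tau$ survive only through the indecomposable $\rho\tau$, so one must track weights modulo $2$ carefully to verify that each differential lands on the correct even-weight tower and, crucially, that the isolated odd-weight classes $\rho\tau^{2k}$, which detect the order-$2$ groups $\pi_{-1,-(2k+1)}^{\mathbb{F}_q}(\text{H}\mathbb{Z})$, are left as permanent cycles.
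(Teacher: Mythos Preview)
Your proposal is correct and takes essentially the same approach as the paper. The paper's proof is terse---it simply says the argument of \Cref{F5 difs mass HZ} goes through by comparison with \Cref{eq:MotivicCohomology}, noting that since $\tau = 0$ on the $E_2$-page one must begin with $\tau^2$---and your write-up is a faithful expansion of exactly that outline, with the added care of tracking the even/odd weight dichotomy and invoking lifting-the-exponent to pin down the lengths.
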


\begin{proof}
    The same proof as was given in \Cref{F5 difs mass HZ} works by comparing with \Cref{eq:MotivicCohomology}, noting that since $\tau=0$ on the $\mathrm{E}_2$-page, the first class in stem 0 which must support a differential is $\tau^2$. 
\end{proof}

We depict the $\mathrm{E}_\infty$-page in the case of $q=3$ in \Cref{fig:f3-Einfty-HZ}, which can be deduced using the same techniques as in \Cref{ex:f5-mASS-HZ}. An empty $\circ$ denotes $\mathbb{F}_2$. A class labeled with $n$ indicates the first nonzero power of $\tau$ which exists in that particular bidegree. An empty class with $k$ circles around it indicates $\tau^{2^k}$-periodicity.
\begin{figure}
    \centering
    \includegraphics[scale=.75]{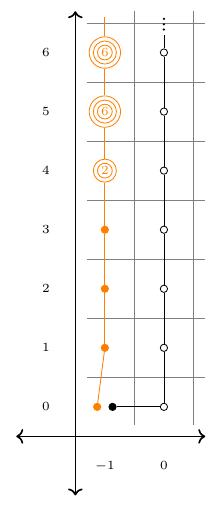}
    \caption{The $\mathrm{E}_\infty$-page of the $\textbf{mASS}^{\mathbb{F}_q}(\text{H}\mathbb{Z})$ for $q\equiv 3\, (4)$.
    }
    \label{fig:f3-Einfty-HZ}
\end{figure}

\subsection{Hermitian K-theory}
We now compute the $\textbf{mASS}^{\mathbb{F}_q}(\text{kq})$. This spectral sequence has signature
\[\mathrm{E}^{s,f,w}_2=\text{Ext}^{s,f,w}_{\euscr{A}^\vee}(\text{H}_{*,*}(\text{kq})) \implies \pi_{s,w}^{\mathbb{F}_q}(\text{kq}).\]
Recall that there is an isomorphism of $\euscr{A}^\vee$-comodules $\text{H}_{*,*}(\text{kq}) \cong (\euscr{A} \modmod \euscr{A}(1))^\vee$. This allows us to rewrite the $\mathrm{E}_2$-page of the $\textbf{mASS}^{\mathbb{F}_q}(\mathrm{kq})$ using a change of rings isomorphism as
\[\text{Ext}^{s,f,w}_{\euscr{A}^\vee}((\euscr{A}\modmod \euscr{A}(1))^\vee) \cong \text{Ext}^{s,f,w}_{\euscr{A}(1)^\vee}(\mathbb{M}_2^{\mathbb{F}_q}).\]
This Ext group was calculated by Kylling to be \cite[Theorem 4.1.5]{Kylingkqfinite}:
\begin{equation}
\label{eq:F3-E2-kq}
\textup{Ext}^{s,f,w}_{\euscr{A}(1)^\vee}( \mathbb{M}_2^{\mathbb{F}_3}) \cong \mathbb{F}_2[\rho, \tau^2, h_0, h_1, a, b, \tau h_1, \rho \tau ]/I.
\end{equation}
The degrees and coweights of the generators are listed in \cref{table:F3-ExtA(1)-generators}, where coweight refers to the difference between stem and weight. The ideal of relations $I$ is described in \cref{table:F3-ExtA(1)-relations}. 
\begin{table}[H]
\begin{minipage}{.45\linewidth}
    \centering
    \setlength{\tabcolsep}{0.5em} 
    {\renewcommand{\arraystretch}{1.2}
    \begin{tabular}{|l|l|l|}
        \hline
        Generator & $(s,f,w)$ & $cw$ \\
        \hline 
        \hline
        $\rho$ & $(-1, 0, -1)$ &0\\
        $h_0$ &$(0,1,0)$ &0\\
        $h_1$ & $(1,1,1)$&0\\
        $b$ & $(8,4,4)$&4\\
        \hline
        $\rho \tau$ & $(-1, 0, -2)$ & 1\\
        $\tau h_1$ &$(1,1,0)$ &1\\
        \hline
        $ \tau^2$ & $(0,0,-2)$&2\\
        $a$ & $(4,3,2)$& 2 \\
        \hline
    \end{tabular}
    \caption{Generators for $\text{Ext}^{*,*,*}_{\euscr{A}(1)}(\mathbb{M}_2^{\mathbb{F}_q})$ for $q \equiv 3 \, (4).$}     
    \label{table:F3-ExtA(1)-generators}}
\end{minipage}
\begin{minipage}{.45\linewidth}
    {\renewcommand{\arraystretch}{1.2}
    \begin{tabular}{|l|l|l|}
        \hline
        Relation & $(s,f,w)$ & $cw$ \\
        \hline
        \hline
        $\rho h_0$ & $(-1,1,-1)$ & 0\\
        $h_0h_1$ & $(1,2,1)$ & 0\\
        $\rho^2$ & $(-2, 0, -2)$ & 0\\
        $a^2 + h_0^2b$ & $(8,6,4)$ & 4\\
        \hline
        $h_0 \cdot \tau h_1 + \rho h_1 \cdot \tau h_1$ & $(1,2,0)$ & 1\\
        $h_1^2 \cdot \tau h_1$ & $(3,3,2)$ & 1\\
        $\rho\tau \cdot h_1^3$ & $(2,3,1)$ & 1 \\
        $\rho \cdot \tau h_1 + h_1 \cdot \rho \tau$  & $(0, 1, -1)$ & 1\\
        $\rho \cdot \rho \tau$ & $(-2, 0, -3)$ & 1\\
        \hline
        $\tau^2 h_1^3 + \rho a$ & $(3,3,1)$ & 2\\
        $h_1 a$ & $(5,4,3)$ & 2\\
        $(\tau h_1)^2 = \tau^2 h_1^2$ & $(2,2,0)$ & 2\\
        $(\rho \tau)^2$ & $(-2, 0, -4)$ & 2\\
        $\tau h_1 \cdot \rho \tau +  \rho \tau^2 h_1$ & $(0,1,-2)$ & 2\\
        \hline
        $\tau h_1 \cdot a$ & $(5,4,2)$ & 3\\
        \hline
    \end{tabular}
    \caption{Relations for $\text{Ext}^{*,*,*}_{\euscr{A}(1)^\vee}(\mathbb{M}_2^{\mathbb{F}_q})$ for $q \equiv 3 \, (4).$}      
    \label{table:F3-ExtA(1)-relations}}
\end{minipage}
\end{table}


To clearly present the data of \cref{eq:F3-E2-kq}, we give individual charts for each coweight modulo 2. Note that the element $\tau^2$ gives a coweight periodicity operator of degree 2. \Cref{fig:F3-ExtA(1)(M2) coweight 0} depicts the coweight $cw \equiv 0 \, (2)$ piece. \Cref{fig:f3-ExtA(1)(M2)-coweight 1+3} depicts the coweight $cw \equiv 1 \, (2)$ piece. The notation agrees with the notation used earlier in this section.
\begin{figure}
    \centering
    \includegraphics[scale=.75]{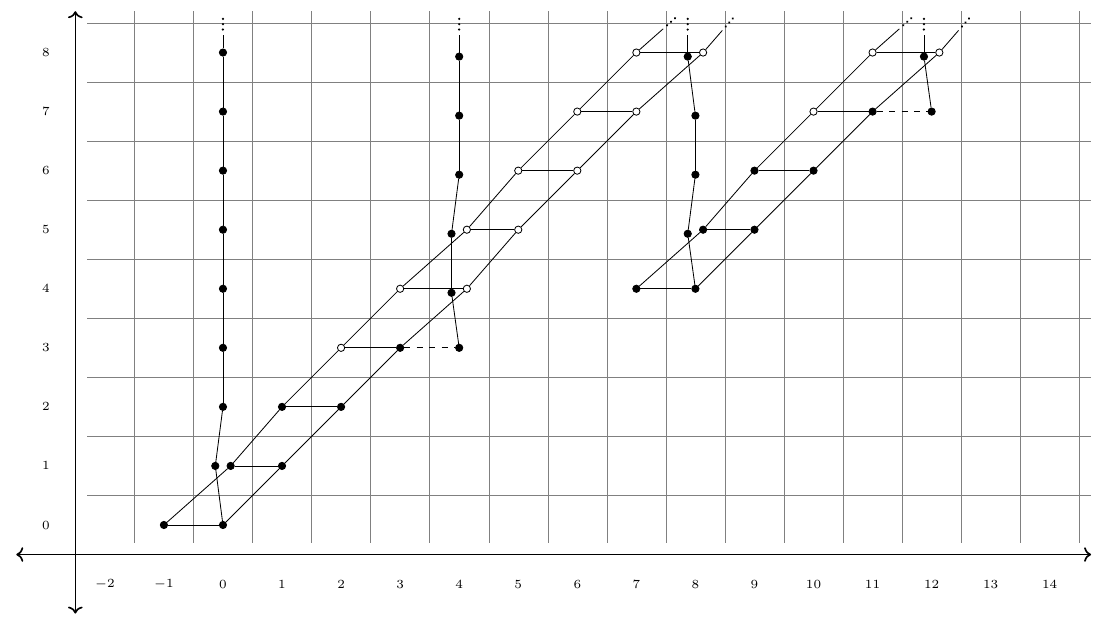}
    \caption{$\textup{Ext}^{*,*,*}_{\euscr{A}(1)^\vee}(\mathbb{M}_2^{\mathbb{F}_q})$ in coweight $cw \equiv 0 \, (2)$ for $q \equiv3 \, (4).$ 
    }
    \label{fig:F3-ExtA(1)(M2) coweight 0}
\end{figure}

\begin{figure}
    \centering
    \includegraphics[scale=.75]{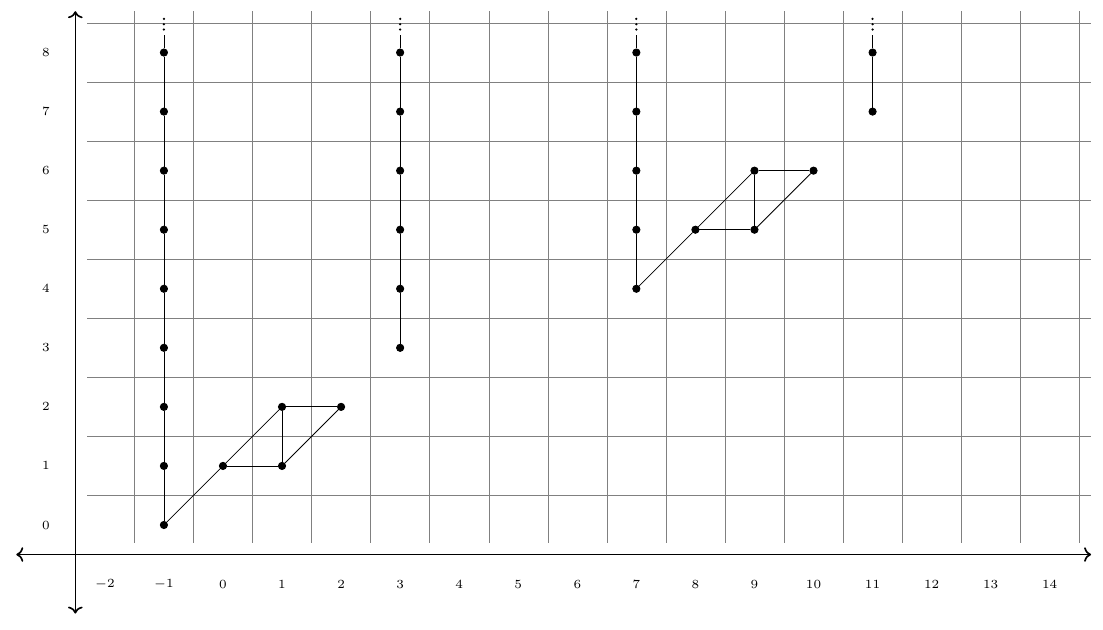}
    \caption{$\textup{Ext}^{*,*,*}_{\euscr{A}(1)^\vee}(\mathbb{M}_2^{\mathbb{F}_q})$ in coweight $cw \equiv 1 \, (2)$ for $q \equiv 3 \, (4).$ 
    }
    \label{fig:f3-ExtA(1)(M2)-coweight 1+3}
\end{figure}

As in the case of a trivial Bockstein action, we can lift the differentials from the $\textbf{mASS}^{\mathbb{F}_q}(\text{H}\mathbb{Z})$ to completely determine the behavior of this spectral sequence. The proof follows from the same argument given in \Cref{prop:EasyHZDifsLiftTokq}

\begin{proposition}
\label{prop:DifsMASSkqHard}
    Let $q \equiv 3 \, (4)$. The differentials in the $\textup{\textbf{mASS}}^{\mathbb{F}_q}(\textup{kq})$ are determined by the differentials in the $\textup{\textbf{mASS}}^{\mathbb{F}_q}(\textup{H}\mathbb{Z}).$
\end{proposition}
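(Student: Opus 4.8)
The plan is to reproduce the strategy of \Cref{prop:EasyHZDifsLiftTokq} in the nontrivial-Bockstein setting. First I would show that every algebra generator of the $E_2$-page listed in \Cref{table:F3-ExtA(1)-generators}, with the single exception of $\tau^2$, is a permanent cycle; by the Leibniz rule this reduces every Adams differential to its values on powers of $\tau^2$. Then I would lift these $\tau^2$-power differentials from the $\textbf{mASS}^{\mathbb{F}_q}(\text{H}\mathbb{Z})$ of \Cref{lemma:f3HZdifs} along the quotient map $\text{kq} \to \text{H}\mathbb{Z}$ of \Cref{eq:kqToHZ}. On $E_2$-pages this map realizes the surjection $\text{Ext}^{***}_{\euscr{A}(1)^\vee}(\mathbb{M}_2^{\mathbb{F}_q}) \to \text{Ext}^{***}_{\euscr{A}(0)^\vee}(\mathbb{M}_2^{\mathbb{F}_q})$ sending $h_1, a, b, \tau h_1 \mapsto 0$ and fixing $\rho, h_0, \tau^2, \rho\tau$; write $\phi$ for the induced map of spectral sequences, so that $\phi(d_r(x)) = d_r(\phi(x))$ throughout.

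For Step~1, note that since every generator of \Cref{table:F3-ExtA(1)-generators} has coweight $cw = s-w \geq 0$, every class on the $E_2$-page has nonnegative coweight, and Adams differentials drop coweight by exactly one. Hence the coweight-zero generators $\rho$, $h_0$, $h_1$ admit no nonzero target and are permanent cycles, and the coweight-one generator $\rho\tau$ (with target in stem $-2$, coweight $0$, where nothing survives since $\rho^2 = 0$) is as well. The remaining generators $\tau h_1$, $a$, $b$ all satisfy $\phi(x) = 0$, so by naturality $\phi(d_r(x)) = d_r(\phi(x)) = 0$; thus any nonzero differential must land in $\ker\phi$. For $\tau h_1$ the only candidate target is $h_0^{r+1}$, but $\phi(h_0^{r+1}) = h_0^{r+1} \neq 0$ is a permanent cycle in the $\textbf{mASS}^{\mathbb{F}_q}(\text{H}\mathbb{Z})$, a contradiction; and the candidate coweight-$1$ and coweight-$3$ targets of $a$ and $b$ are forced to vanish once one imposes the relations of \Cref{table:F3-ExtA(1)-relations}, such as $h_0 h_1 = 0$, $h_1^2 \cdot \tau h_1 = 0$, and $\tau h_1 \cdot a = 0$.

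For Step~2, with all generators but $\tau^2$ now permanent cycles, the Leibniz rule expresses every differential in terms of $d_r$ on powers of $\tau^2$. For $i \geq 1$ we have $\tau^{2^i} = (\tau^2)^{2^{i-1}}$ and, since $2^i - 1$ is odd, the target $\rho\tau^{2^i-1}h_0^{\nu_2(q^2-1)+i} = \rho\tau \cdot (\tau^2)^{2^{i-1}-1}h_0^{\nu_2(q^2-1)+i}$ already lies on the $\text{kq}$ $E_2$-page and maps isomorphically under $\phi$. By naturality $d_{\nu_2(q^2-1)+i}(\tau^{2^i})$ maps to the nonzero class $\rho\tau^{2^i-1}h_0^{\nu_2(q^2-1)+i}$ of \Cref{lemma:f3HZdifs}, so this differential lifts verbatim to the $\textbf{mASS}^{\mathbb{F}_q}(\text{kq})$, exactly as in \Cref{prop:EasyHZDifsLiftTokq}. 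Together with Step~1 this pins down the entire spectral sequence.

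I expect the main obstacle to be Step~1 for the higher-coweight generators $a$ and $b$. Unlike the coweight-zero and coweight-one cases, their differentials cannot be dismissed simply by an empty target: the naturality constraint $\phi(d_r(x)) = 0$ only confines the differential to $\ker\phi$, and one must genuinely verify that the few candidate classes in the relevant coweight-$1$ and coweight-$3$ tridegrees are already zero on the $E_2$-page. This is where a careful reading of \Cref{table:F3-ExtA(1)-relations} does the real work, in contrast to the cleaner degree count available in \Cref{prop:EasyHZDifsLiftTokq}.
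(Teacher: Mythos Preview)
Your proposal is correct and follows the same approach as the paper, which simply states that the proof of \Cref{prop:EasyHZDifsLiftTokq} carries over verbatim. Your version is in fact more carefully spelled out: you organize Step~1 via the coweight grading (which the paper uses only for its charts, not in the proof of \Cref{prop:EasyHZDifsLiftTokq}) and invoke naturality of $\phi$ explicitly to constrain targets, whereas the paper's argument for the $q\equiv 1\,(4)$ case relies on ad hoc stem/weight checks and never explicitly addresses $a$. Your anticipated obstacle for $a$ and $b$ is real but tractable exactly as you describe---the candidate monomials in coweights $1$ and $3$ all vanish by the relations in \Cref{table:F3-ExtA(1)-relations}---and your observation that the lift in Step~2 is unambiguous is justified by the fact that in stem $-1$ and filtration $\geq 1$ the map $\phi$ is an isomorphism onto its image.
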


As in the previous section, by running the $\textbf{mASS}^{\mathbb{F}_q}(\text{kq})$ we see that 
\[\pi_{s,w}^{\mathbb{F}_q}(\text{kq}) \cong\left\{\begin{array}{rl}
    \text{KO}_{s-2w}(\mathbb{F}_q) & w \equiv 0 \, (4) \\
    \text{KSp}_{s-2w}(\mathbb{F}_q) & w \equiv 2 \ (4) 
\end{array} \right.\]
for $s \geq 0$, agreeing with Friedlander's calculations given in \Cref{table:fried}.

\begin{remark}
\label{rem:f3NegativeStemkq}
    The same argument given in \Cref{rem:f5NegativeStemkq} shows that there is an isomorphism $\pi_{s,w}^{\mathbb{F}_q}(\text{kq}) \cong \pi_{s,w}^{\mathbb{F}_q}(\text{H}\mathbb{Z})$ for $s \leq 0.$ More generally, we can express the homotopy of kq in terms of $\text{H}\mathbb{Z}$:
    \[\pi_{*,*}^{\mathbb{F}_q}(\text{kq}) \cong \frac{(\pi_{*,*}^{\mathbb{F}_q}(\text{H}\mathbb{Z}))[\eta, \tau^{n}\eta, \tau^{n}\eta^2,\alpha, \beta: n \geq 0]}{(2\eta, \eta\alpha, \tau^n\eta \alpha, \tau^n\eta^2\alpha, \alpha^2-4\beta)},\]
    where $\eta$ is detected by $h_1$, $\alpha$ is detected by $a$, and $\beta$ is detected by $b$.
    Notice that since $\tau h_1$ survives the spectral sequence, the homotopy of kq will have classes of the form $\tau^n \eta$ for all $n \geq0$, even though $\tau =0$ on the $\mathrm{E}_2$-page of the $\textbf{mASS}^{\mathbb{F}_q}(\text{kq}).$
\end{remark}

\subsection{Brown--Gitler comodules}
We begin the process of computing the $\mathrm{E}_2$-page of the $\textbf{mASS}^{\mathbb{F}_q}(\text{kq} \otimes \text{kq})$. To start, we will compute $\text{Ext}_{\euscr{A}(1)^\vee}^{s,f,w}(B_0^{\mathbb{F}_q}(1))$ by the $\textbf{aAHSS}^{\mathbb{F}_q}(B_0^{\mathbb{F}_q}(1))$. This spectral sequence has signature
\[\textup{E}_1^{s,f,w,a} = \text{Ext}^{s,f,w}_{\euscr{A}(1)}(\mathbb{M}_2^{\mathbb{F}_q}) \otimes \mathbb{M}^{\mathbb{F}_q}_2\{[1], [\overline{\xi}_1], [\overline{\tau}_1]\}\implies \text{Ext}^{s,f,w}_{\euscr{A}(1)}(B_0^{\mathbb{F}_q}(1))\]
with differentials taking the form
\[d_r:\mathrm{E}_r^{s,f,w,a} \to \mathrm{E}_r^{s-1, f+1,w, a-r}.\]
We have discussed the differentials in this spectral sequence in \Cref{inductive process section}. In particular, for any $\alpha \in \text{Ext}_{\euscr{A}(1)^\vee}^{*,*,*}(\mathbb{M}_2^{\mathbb{F}_q})$, we have that
\[d_1(\alpha[3]) = h_0\alpha[2],\]
and
\[d_2(\alpha[2]) = h_1\alpha[0].\]
There is a potential third differential between Atiyah-Hirzebruch filtrations 3 and 0, which we saw did not exist for degree reasons in the case of a trivial Bockstein action on $\mathbb{F}_q$. However, this is not the case here.

\begin{lemma}
\label{lemma:aAHSSDif}
    Let $q \equiv 3 \, (4)$. There is a $d_3$-differential in the $\textup{\textbf{aAHSS}}(B_0^{\mathbb{F}_q}(1)):$
    \[d_3(\rho[3]) = (\tau h_1)[0].\]
\end{lemma}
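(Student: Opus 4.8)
The plan is to identify this $d_3$ with the Massey product $\langle \rho, h_0, h_1\rangle[0]$ predicted in the setup of the $\textbf{aAHSS}(B_0^{\mathbb{F}_q}(1))$, and then to compute that bracket explicitly in the cobar complex of the Hopf algebroid $(\mathbb{M}_2^{\mathbb{F}_q}, \euscr{A}(1)^\vee)$. First I would check that $\rho[3]$ genuinely survives to the $E_3$-page: the $d_1$-differential satisfies $d_1(\rho[3]) = h_0\rho[2] = (\rho h_0)[2] = 0$ since $\rho h_0 = 0$, and there is no room for a $d_2$ because $B_0^{\mathbb{F}_q}(1)$ has no cell in Atiyah--Hirzebruch filtration $1$. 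Hence $\rho[3]$ is a $d_2$-cycle whose only possible differential is the $d_3$ to filtration $0$, which by the discussion preceding the lemma is governed by $\langle \rho, h_0, h_1\rangle$.

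Next I would verify that the bracket is defined and compute it. Both $\rho h_0 = 0$ and $h_0 h_1 = 0$ are relations in $\text{Ext}_{\euscr{A}(1)^\vee}(\mathbb{M}_2^{\mathbb{F}_q})$, so $\langle\rho, h_0, h_1\rangle$ is defined. The crucial input — and exactly the feature that is absent when $q \equiv 1 \,(4)$ — is the nontrivial right unit of the Hopf algebroid, $\eta_R(\tau) = \tau + \rho\overline{\tau}_0$, which is the cobar-level incarnation of the nontrivial Bockstein. It gives $d(\tau) = \eta_R(\tau) - \eta_L(\tau) = \rho[\overline{\tau}_0]$, so $\tau$ is a null-homotopy for the cochain $\rho[\overline{\tau}_0]$ representing $\rho h_0$. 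For the second product, the reduced coproducts $\overline{\psi}(\overline{\tau}_1) = \overline{\xi}_1 \otimes \overline{\tau}_0$ and $\overline{\psi}(\overline{\tau}_0\overline{\xi}_1) = \overline{\tau}_0 \otimes \overline{\xi}_1 + \overline{\xi}_1 \otimes \overline{\tau}_0$ combine to give $d(\overline{\tau}_1 + \overline{\tau}_0\overline{\xi}_1) = [\overline{\tau}_0 | \overline{\xi}_1]$, a null-homotopy for $h_0 h_1$.

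Assembling the standard formula for the bracket then yields
\[\langle\rho, h_0, h_1\rangle = \rho\cdot[\overline{\tau}_1 + \overline{\tau}_0\overline{\xi}_1] + \tau\cdot[\overline{\xi}_1] = [\,\rho\overline{\tau}_1 + \rho\overline{\tau}_0\overline{\xi}_1 + \tau\overline{\xi}_1\,] = [\overline{\tau}_0^2],\]
where the last equality is precisely the defining relation $\overline{\tau}_0^2 = \tau\overline{\xi}_1 + \rho\overline{\tau}_1 + \rho\overline{\tau}_0\overline{\xi}_1$ of $\euscr{A}(1)^\vee$ for $q \equiv 3 \,(4)$. Since $\overline{\tau}_0^2$ is primitive, $[\overline{\tau}_0^2]$ is manifestly a cocycle, and it is not a coboundary because $\mathbb{M}_2^{\mathbb{F}_q}$ (which supplies the filtration-$0$ cochains) is concentrated in stems $0$ and $-1$, so $d$ of a $0$-cochain cannot reach this bidegree. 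As Kylling's computation gives $\text{Ext}^{1,1,0}_{\euscr{A}(1)^\vee}(\mathbb{M}_2^{\mathbb{F}_q}) = \mathbb{F}_2\{\tau h_1\}$, this forces $\langle\rho, h_0, h_1\rangle = \tau h_1$. I would finish by checking that the indeterminacy vanishes: it is $\rho\cdot\text{Ext}^{2,1,1} + \text{Ext}^{0,0,-1}\cdot h_1$, and both of these groups are zero (there is no primitive in $\text{Ext}^{0,0,-1}$ since $\tau$ is not a cocycle, and $\text{Ext}^{2,1,1}$ is empty by inspection of the filtration-$1$ classes). Hence $d_3(\rho[3]) = (\tau h_1)[0]$.

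The main obstacle is the Hopf algebroid bookkeeping: one must keep the two module structures straight, carry the null-homotopies in the correct internal degrees, and confirm that the bracket lands on the indecomposable generator $\tau h_1$ rather than on $0$ or on a decomposable class. The conceptual payoff is that the appearance of this differential is entirely controlled by the term $\rho\overline{\tau}_0$ in $\eta_R(\tau)$; formally setting $\rho = 0$ kills the null-homotopy of $\rho h_0$ and recovers the vanishing of $d_3$ in the trivial-Bockstein case.
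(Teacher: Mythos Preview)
Your proposal is correct and follows exactly the approach sketched in the paper's proof: identify the $d_3$ as the Massey product $\langle \rho, h_0, h_1\rangle$ and evaluate it in the cobar complex, checking that the indeterminacy vanishes. You have simply supplied the explicit null-homotopies and the degree bookkeeping that the paper compresses into the phrase ``a simple computation in the cobar complex.''
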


\begin{proof}
    This differential is given by the Massey product $\langle \rho, h_0, h_1\rangle[0]$. A simple computation in the cobar complex shows that this is equal to $\tau h_1$ with no indeterminacy. 
\end{proof}

Since the differentials in the $\textbf{aAHSS}(B_0^{\mathbb{F}_q}(1))$ are module maps over $\text{Ext}^{*,*,*}_{\euscr{A}(1)^\vee}(\mathbb{M}_2^{\mathbb{F}_q})$, this determines the spectral sequence. Hidden extensions lift as in the previous section.
Note that the $d_1$ and $d_2$-differentials are given by multiplication with an element of coweight 0, while the $d_3$-differential is given by multiplication with an element of coweight 1.

We now provide charts for the $\textup{E}_\infty$-page. \Cref{fig:f3-mASS-ksp-cw0} depicts the coweight $cw\equiv 0 \,(2)$ piece. \Cref{fig:f3-mASS-ksp-cw1_3} depicts the coweight $cw \equiv 1 \, (2)$ piece. 

\begin{figure}[h]
    \centering
    \includegraphics[scale=.75]{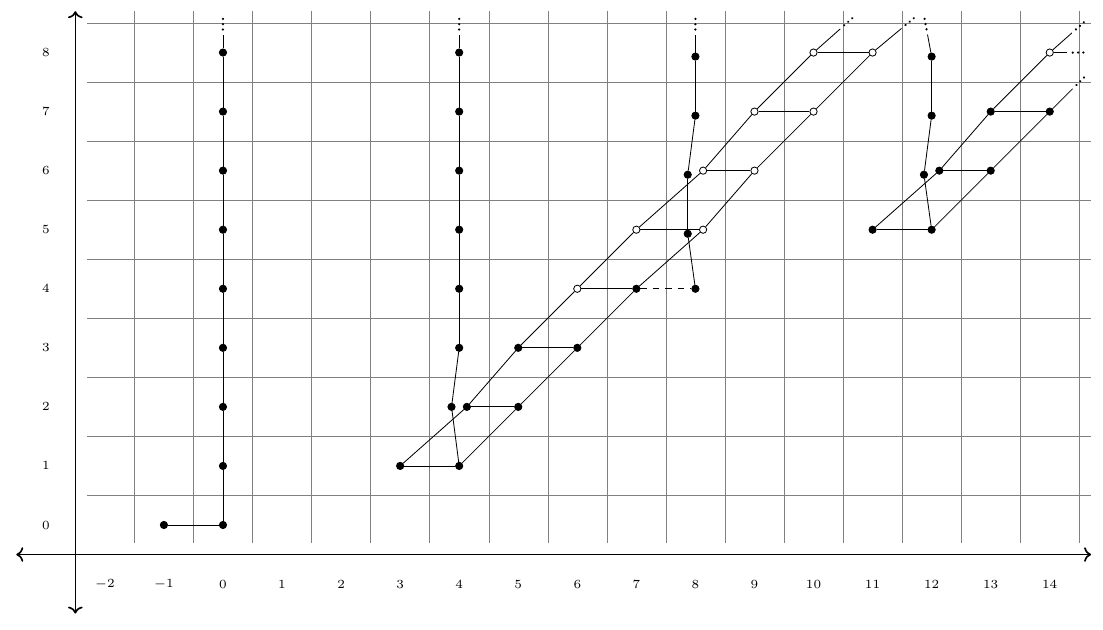}
    \caption{The $\textup{E}_\infty$-page of the $\textbf{aAHSS}(B_0^{\mathbb{F}_q}(1))$ in coweight $cw \equiv 0 \, (2)$ for $q \equiv 3\, (4)$.}
    \label{fig:f3-mASS-ksp-cw0}
\end{figure}

\begin{figure}[h]
    \centering
    \includegraphics[scale=.75]{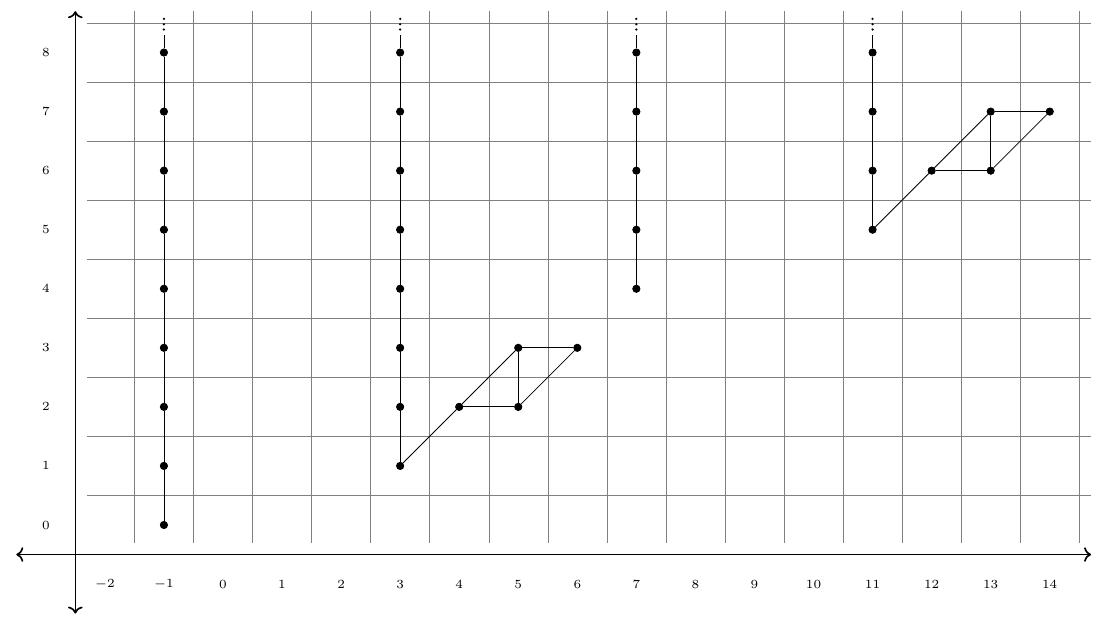}
    \caption{The $\textup{E}_\infty$-page of the $\textbf{aAHSS}(B_0^{\mathbb{F}_q}(1))$ in coweight $cw \equiv 1 \, (2)$ for $q \equiv 3\, (4)$.}
    \label{fig:f3-mASS-ksp-cw1_3}
\end{figure}

The equivalence of motivic spectra $\text{ksp} \simeq \text{H}\mathbb{Z}_1^{\mathbb{F}_q} \otimes \text{kq}$ from \cref{eq:kspSplitting}
implies that the $\textup{E}_2$-page of the $\textbf{mASS}^{\mathbb{F}_q}(\text{ksp})$ is given by $\text{Ext}_{\euscr{A}(1)^\vee}^{*,*,*}(B_0^{\mathbb{F}_q}(1)).$ Our computations with the $\textbf{aAHSS}^{\mathbb{F}_q}(B_0^{\mathbb{F}_q}(1))$ show that 
\[\text{Ext}_{\euscr{A}(1)^\vee}^{s,f,w}(B_0^{\mathbb{F}_q}(1)) \cong \Sigma^{4,2}\text{Ext}_{\euscr{A}(1)^\vee}^{s,f,w}(\mathbb{M}_2^{\mathbb{F}_q})\langle 1 \rangle \oplus \text{Ext}_{\euscr{A}(0)^\vee}^{s,f,w}(\mathbb{M}_2^{\mathbb{F}_q}).\]
The same argument as in \Cref{lem:kspmASSDifsEasy} gives the following.
\begin{lemma}
\label{lemma:kspmASSF3}
    Let $q \equiv 3 \, (4)$. In the $\textup{\textbf{mASS}}^{\mathbb{F}_q}(\textup{ksp})$, the differentials are determined by the differentials in the $\textup{\textbf{mASS}}^{\mathbb{F}_q}(\textup{kq})$ and the $\textup{\textbf{mASS}}^{\mathbb{F}_q}(\textup{H}\mathbb{Z})$.
\end{lemma}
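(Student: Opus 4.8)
The plan is to replay the proof of \Cref{lem:kspmASSDifsEasy} essentially verbatim, now supplying the nontrivial-Bockstein inputs. The decomposition of the $E_2$-page,
\[\text{Ext}_{\euscr{A}(1)^\vee}^{s,f,w}(B_0^{\mathbb{F}_q}(1)) \cong \Sigma^{4,2}\text{Ext}_{\euscr{A}(1)^\vee}^{s,f,w}(\mathbb{M}_2^{\mathbb{F}_q})\langle 1 \rangle \oplus \text{Ext}_{\euscr{A}(0)^\vee}^{s,f,w}(\mathbb{M}_2^{\mathbb{F}_q}),\]
is already in hand from the $\textbf{aAHSS}^{\mathbb{F}_q}(B_0^{\mathbb{F}_q}(1))$ computation; it exhibits the $E_2$-page of the $\textbf{mASS}^{\mathbb{F}_q}(\text{ksp})$ as the direct sum of a shift of the $E_2$-page of the $\textbf{mASS}^{\mathbb{F}_q}(\text{kq})$ and the $E_2$-page of the $\textbf{mASS}^{\mathbb{F}_q}(\text{H}\mathbb{Z})$. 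I would then transport differentials onto each summand using the two structure maps of the cofiber sequence \Cref{cofib:kqTokspToHZ}.

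Concretely, the projection $\text{ksp} \to \text{H}\mathbb{Z}$ induces a map $\textbf{mASS}^{\mathbb{F}_q}(\text{ksp}) \to \textbf{mASS}^{\mathbb{F}_q}(\text{H}\mathbb{Z})$ realized on $E_2$-pages by the evident projection onto the $\text{Ext}_{\euscr{A}(0)^\vee}(\mathbb{M}_2^{\mathbb{F}_q})$ summand, and this lifts the differentials of \Cref{lemma:f3HZdifs} into that summand. Similarly, the inclusion $\Sigma^{4,2}\text{kq} \to \text{ksp}$ induces a map $\textbf{mASS}^{\mathbb{F}_q}(\Sigma^{4,2}\text{kq}) \to \textbf{mASS}^{\mathbb{F}_q}(\text{ksp})$ realized as the inclusion of the $\Sigma^{4,2}\text{Ext}_{\euscr{A}(1)^\vee}(\mathbb{M}_2^{\mathbb{F}_q})$ summand, which lifts the differentials of \Cref{prop:DifsMASSkqHard} there. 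What remains is to verify that these two families account for every differential in the $\textbf{mASS}^{\mathbb{F}_q}(\text{ksp})$.

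This final verification is the main obstacle, and it is more delicate than in \Cref{lem:kspmASSDifsEasy} owing to the richer algebra recorded in \Cref{table:F3-ExtA(1)-generators} and \Cref{table:F3-ExtA(1)-relations}. One must in particular rule out any differential crossing between the two summands, a possibility foreshadowed by the $d_3$-differential $d_3(\rho[3]) = (\tau h_1)[0]$ of \Cref{lemma:aAHSSDif}, which already couples the top and bottom cells of $B_0^{\mathbb{F}_q}(1)$. I would dispatch this coweight by coweight, reading off the charts of \Cref{fig:f3-mASS-ksp-cw0} and \Cref{fig:f3-mASS-ksp-cw1_3} and using that Adams differentials preserve motivic weight, hence lower coweight by exactly one. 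A generator-by-generator check, exactly as in \Cref{prop:DifsMASSkqHard}, then confirms that weight obstructions forbid any extraneous differential --- in particular any crossing between the two summands --- so the two families above exhaust all differentials and the proof closes.
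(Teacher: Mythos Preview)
Your proposal is correct and follows essentially the same approach as the paper, which simply states that the argument of \Cref{lem:kspmASSDifsEasy} goes through verbatim. Your extra caution about ruling out cross-summand differentials via a coweight-by-coweight check is more explicit than the paper's terse ``for degree reasons,'' but it is the same idea and not a genuinely different route.
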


By running the $\textbf{mASS}^{\mathbb{F}_q}(\text{ksp})$ and comparing with \Cref{table:fried}, we recover the expected isomorphism for $s \geq 4$
\[\pi_{s,w}^{\mathbb{F}_q}(\text{ksp}) \cong\left\{\begin{array}{rl}
    \text{KSp}_{s-2w}(\mathbb{F}_q) & w \equiv 0 \, (4); \\
    \text{KO}_{s-2w}(\mathbb{F}_q) & w \equiv 2 \ (4). 
\end{array} \right.\]

We now compute the group $\text{Ext}^{*,*,*}_{\euscr{A}(1)^\vee}(B_0^{\mathbb{F}_q}(1)^{\otimes i}).$ To do this, we introduce a family of trigraded groups and use the notation described in the paragraphs preceding \Cref{def:F5_Zi}. 

\begin{definition}
    For $i \geq 0$, let $Z_i^{\mathbb{F}_q}$ be the  $\text{Ext}^{*,*,*}_{\euscr{A}(1)^\vee}(\mathbb{M}_2^{\mathbb{F}_q})$-module defined as follows.
    
    For $i \equiv 0 \, (4)$, let $Z_i^{\mathbb{F}_q}$ be
    \[ \tau^{h_1}_{\geq 2i}\left(\text{Ext}_{\euscr{A}(1)^\vee}^{*,*,*}(\text{kq}^{\langle i \rangle})\right) \oplus \bigoplus_{j=0}^{i/2-1}\Sigma^{4j,2j}\text{Ext}_{\euscr{A}(0)^\vee}^{*,*,*}(\mathbb{M}_2^{\mathbb{F}_q}).\]
    
    For $i \equiv 1 \, (4)$, let $Z_i^{\mathbb{F}_q}$
    \[\tau^{h_1}_{\geq 2i-2}\left(\text{Ext}_{\euscr{A}(1)^\vee}^{*,*,*}(\text{ksp}^{\langle i-1 \rangle}) \right) \oplus \bigoplus_{j=0}^{(i-1)/2-1}\Sigma^{4j,2j}\text{Ext}_{\euscr{A}(0)^\vee}^{*,*,*}(\mathbb{M}_2^{\mathbb{F}_q}).\]
    
    For $i \equiv 2 \, (4)$, let $Z_i^{\mathbb{F}_q}$ be
    \[\tau^{h_1}_{\geq 2i}\left(\text{Ext}_{\euscr{A}(1)^\vee}^{*,*,*}(\text{kq}^{\langle i \rangle})\right) \oplus \bigoplus_{j=0}^{i/2-1}\Sigma^{4j,2j}\text{Ext}_{\euscr{A}(0)^\vee}^{*,*,*}(\mathbb{M}_2^{\mathbb{F}_q}) \oplus \Sigma^{2i-2, i}\mathbb{F}_2[\tau^2]. \]
   
    For $i \equiv 3 \, (4)$, let $Z_i^{\mathbb{F}_q}$ be
    \[\tau^{h_1}_{\geq 2i-2}\left(\text{Ext}_{\euscr{A}(1)^\vee}^{*,*,*}(\text{ksp}^{\langle i-1 \rangle})\right) \oplus \bigoplus_{j=0}^{(i-1)/2-1}\Sigma^{4j,2j}\text{Ext}_{\euscr{A}(0)^\vee}^{*,*,*}(\mathbb{M}_2^{\mathbb{F}_q}) \oplus \Sigma^{2i-3, i-2}\mathbb{F}_2[\tau^2, h_1]/(h_1^2). \]  
\end{definition}

\begin{proposition}
\label{prop:ExtB01PowersF3}
    Let $q \equiv 3 \, (4)$. For all $i \geq 0$, there is an isomorphism of $\textup{Ext}_{\euscr{A}(1)^\vee}^{*,*,*}(\mathbb{M}_2^{\mathbb{F}_q})$-modules
    \[\frac{\textup{Ext}_{\euscr{A}(1)^\vee}^{*,*,*}(B_0^{\mathbb{F}_q}(1)^{\otimes i})}{v_1\textup{-torsion}} \cong Z^{\mathbb{F}_q}_i.\]
\end{proposition}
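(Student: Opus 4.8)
The plan is to induct on $i$, running the algebraic Atiyah--Hirzebruch spectral sequence used in the proof of \Cref{prop:easyExtB01powers} but now retaining the nontrivial $d_3$-differential of \Cref{lemma:aAHSSDif}. Because the nontrivial Bockstein action obstructs a clean base change from $\mathbb{C}$, I would compute directly against the $\mathbb{F}_q$-Adams covers appearing in the preceding definition of $Z_i^{\mathbb{F}_q}$, rather than reducing to the $\mathbb{C}$-motivic answer. For the base cases, $i=0$ gives $B_0^{\mathbb{F}_q}(1)^{\otimes 0} = \mathbb{M}_2^{\mathbb{F}_q}$, so the claim reduces to the identity $\text{Ext}_{\euscr{A}(1)^\vee}^{***}(\mathbb{M}_2^{\mathbb{F}_q}) = \text{Ext}_{\euscr{A}(1)^\vee}^{***}(\text{kq})$ with trivial truncation, while $i=1$ follows from the identification $\text{Ext}_{\euscr{A}(1)^\vee}^{***}(B_0^{\mathbb{F}_q}(1)) = \text{Ext}_{\euscr{A}(1)^\vee}^{***}(\text{ksp})$ together with the $\textbf{aAHSS}(B_0^{\mathbb{F}_q}(1))$ computation carried out above, both modulo $v_1$-torsion.

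For the inductive step, assuming the isomorphism for $i$, I would apply the functor $\text{Ext}^{***}_{\euscr{A}(1)^\vee}(B_0^{\mathbb{F}_q}(1)^{\otimes i} \otimes -)$ to the cellular filtration of $B_0^{\mathbb{F}_q}(1)$, producing a spectral sequence
\[E_1 = \text{Ext}_{\euscr{A}(1)^\vee}^{s,f,w}(B_0^{\mathbb{F}_q}(1)^{\otimes i}) \otimes \mathbb{M}_2^{\mathbb{F}_q}\{[1], [\overline{\xi}_1], [\overline{\tau}_1]\} \implies \text{Ext}^{s,f,w}_{\euscr{A}(1)^\vee}(B_0^{\mathbb{F}_q}(1)^{\otimes i+1}),\]
working modulo $v_1$-torsion throughout and using the induction hypothesis to replace the $E_1$-page with $Z_i^{\mathbb{F}_q} \otimes \mathbb{M}_2^{\mathbb{F}_q}\{[1], [\overline{\xi}_1], [\overline{\tau}_1]\}$. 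Since this spectral sequence is induced by the same filtration as $\textbf{aAHSS}(B_0^{\mathbb{F}_q}(1))$, its differentials are governed by the same $\text{Ext}_{\euscr{A}(1)^\vee}^{***}(\mathbb{M}_2^{\mathbb{F}_q})$-linear formulae $d_1(\alpha[3]) = h_0\alpha[2]$, $d_2(\alpha[2]) = h_1\alpha[0]$, and $d_3(\alpha[3]) = \langle\alpha, h_0, h_1\rangle[0]$, the last being nonzero precisely on $\rho$-divisible classes by \Cref{lemma:aAHSSDif}.

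I would then identify the $E_\infty$-page with $Z_{i+1}^{\mathbb{F}_q}$ by tracking each differential against the summands of $Z_i^{\mathbb{F}_q}$. The $d_1$-differential, running from the $[\overline{\tau}_1]$-cell into the $[\overline{\xi}_1]$-cell, truncates the $h_0$-towers and raises Adams filtration, carrying the Adams cover at index $i$ to the cover at index $i+1$ and switching between $\text{kq}$- and $\text{ksp}$-covers as $i$ increments modulo $4$; the $d_2$-differential from the $[\overline{\xi}_1]$-cell to the $[1]$-cell produces the $\Sigma^{4j,2j}\text{Ext}_{\euscr{A}(0)^\vee}^{***}(\mathbb{M}_2^{\mathbb{F}_q})$ summands together with the $h_1$-truncation $\tau^{h_1}_{\geq n}$ of the covers; and the $d_3$-differential from the $[\overline{\tau}_1]$-cell to the $[1]$-cell accounts for the residual $\mathbb{F}_2[\tau^2]$ and $\mathbb{F}_2[\tau^2, h_1]/(h_1^2)$ pieces. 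I expect the main obstacle to be the coweight bookkeeping: in contrast to the trivial Bockstein case, where $d_3$ vanishes for degree reasons and the computation splits off cleanly as $Z_i^{\mathbb{C}} \otimes \mathbb{M}_2^{\mathbb{F}_q}$, here the nonzero $d_3$ raises coweight by $1$ and therefore interweaves the coweight-even and coweight-odd charts of \Cref{fig:f3-mASS-ksp-cw0} and \Cref{fig:f3-mASS-ksp-cw1_3}. One must check that, beyond the $d_3$-differentials forced by $\text{Ext}_{\euscr{A}(1)^\vee}^{***}(\mathbb{M}_2^{\mathbb{F}_q})$-linearity, no further $d_3$-differentials arise, and that the hidden extensions assemble into exactly the claimed $Z_{i+1}^{\mathbb{F}_q}$ module structure.
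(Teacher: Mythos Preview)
Your proposal is correct and follows essentially the same approach as the paper: induction on $i$ via the algebraic Atiyah--Hirzebruch spectral sequence coming from the cellular filtration on $B_0^{\mathbb{F}_q}(1)$, with differentials determined by the same formulae including the nontrivial $d_3$ of \Cref{lemma:aAHSSDif}, and hidden extensions handled as in the classical case. Your writeup is considerably more explicit than the paper's (which simply refers back to the argument of \Cref{prop:easyExtB01powers} and asserts the result), and you correctly observe that one cannot reduce to the $\mathbb{C}$-motivic answer here because the nontrivial Bockstein obstructs the base-change isomorphism used in the $q\equiv 1\,(4)$ case.
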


\begin{proof}
    The proof is similar to the proof of \Cref{prop:easyExtB01powers}. The case of $i=0$ was shown in \Cref{eq:F3-E2-kq}, and the case of $i=1$ was shown in \Cref{lemma:kspmASSF3}. In the general case, consider the algebraic Atiyah-Hirzebruch spectral sequence
    \[\textup{E}^{s,f,w,a}_1 = \text{Ext}^{s,f,w}_{\euscr{A}(1)^\vee}(B_0^{\mathbb{F}_q}(1)^{\otimes i}) \otimes \mathbb{M}_2^{\mathbb{F}_q}\{[1], [\overline{\xi}_1], [\overline{\tau}_1]\} \implies \text{Ext}^{s,f,w}_{\euscr{A}(1)^\vee}(B_0^{\mathbb{F}_q}(1)^{\otimes i+1}).\]
    The differentials are determined by the cellular filtration on $B_0^{\mathbb{F}_q}(1)$ in the exact same way as before, and hidden extensions follow as in the classical case. The result follows.
\end{proof}

Before computing the groups $\text{Ext}_{\euscr{A}(1)^\vee}^{*,*,*}(B_0^{\mathbb{F}_q}(k))$, we make a preliminary calculation.
\begin{lemma}
\label{lemma:Ext-A0-B1K-OverF3}
    Let $q \equiv 3 \, (4).$ There is an isomorphism of $\textup{Ext}_{\euscr{A}(0)^\vee}^{*,*,*}(\mathbb{M}_2^{\mathbb{F}_q})$-modules
    \[\frac{\textup{Ext}_{\euscr{A}(0)^\vee}^{*,*,*}(B_1^{\mathbb{F}_q}(k))}{v_1\textup{-torsion}} \cong  \bigoplus_{j= 0}^k\Sigma^{4j, 2j}\mathbb{F}_2[\rho, \tau^2, h_0, \rho\tau]/(\rho^2, \rho h_0, \rho(\rho \tau), (\rho\tau)^2).\]
\end{lemma}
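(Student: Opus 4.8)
The plan is to read off $\text{Ext}_{\euscr{A}(0)^\vee}(B_1^{\mathbb{F}_q}(k))$ directly from the $\euscr{A}(0)^\vee$-comodule structure of $B_1^{\mathbb{F}_q}(k)$, which is controlled entirely by the single Milnor primitive $Q_0$ dual to $\overline{\tau}_0$ (the Bockstein $\text{Sq}^1$). Since $\euscr{A}(0)^\vee = \mathbb{M}_2^{\mathbb{F}_q}[\overline{\tau}_0]/(\overline{\tau}_0^2)$, an $\euscr{A}(0)^\vee$-comodule is the same data as a $\mathbb{M}_2^{\mathbb{F}_q}$-module equipped with $Q_0$, and the coaction on $B_1^{\mathbb{F}_q}(k) \subset (\euscr{A} \modmod \euscr{A}(1))^\vee$ is pinned down by the two rules $Q_0(\overline{\tau}_i) = \overline{\xi}_i$ (the external part, from the Milnor coproduct) and the internal Bockstein $Q_0(\tau) = \rho$ characteristic of the case $q \equiv 3\,(4)$, extended as a derivation.

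First I would determine the $Q_0$-comodule decomposition. On the polynomial generators $\overline{\xi}_1^2, \overline{\xi}_2, \overline{\tau}_2, \overline{\xi}_3, \overline{\tau}_3, \dots$ of $(\euscr{A}\modmod\euscr{A}(1))^\vee$, the external $Q_0$ pairs each $\overline{\tau}_i$ with $\overline{\xi}_i$ for $i \geq 2$ while leaving $\overline{\xi}_1^2$ unpaired, so that each pair $(\overline{\tau}_i, \overline{\xi}_i)$ spans a Koszul, hence cofree (extended), $\euscr{A}(0)^\vee$-comodule summand and the surviving $Q_0$-Margolis homology is the polynomial algebra on $\overline{\xi}_1^2$ over the base. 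Because $\text{wt}(\overline{\tau}_i) = \text{wt}(\overline{\xi}_i) = 2^i$, the Mahowald-weight truncation $\text{wt} \leq 4k$ keeps each Koszul pair intact, and keeps exactly the unpaired classes $\overline{\xi}_1^{2j}$ of internal degree $(4j, 2j)$ for $0 \leq j \leq k$. I would record this as a splitting of $B_1^{\mathbb{F}_q}(k)$ into the $k+1$ base comodules $\Sigma^{4j, 2j}\mathbb{M}_2^{\mathbb{F}_q}$ together with extended (injective) comodules coming from the Koszul pairs; this count of $\euscr{A}(0)^\vee$-comodule generators agrees position-for-position with the $\mathbb{C}$-motivic and trivial-Bockstein situations used in the proof of \Cref{thm:easyExtB0k}.

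Applying $\text{Ext}_{\euscr{A}(0)^\vee}(-)$ then finishes the computation: each base summand contributes a shifted copy of \eqref{eq:ExtA0-F3}, assembling to the claimed $\bigoplus_{j=0}^k \Sigma^{4j, 2j}\mathbb{F}_2[\rho, \tau^2, h_0, \rho\tau]/(\rho^2, \rho h_0, \rho(\rho\tau), (\rho\tau)^2)$, while the extended summands are acyclic in positive filtration and do not disturb the stated module. Concretely I would package the argument as an algebraic Atiyah–Hirzebruch spectral sequence obtained by filtering $B_1^{\mathbb{F}_q}(k)$ by Mahowald weight, with $E_1 = \text{Ext}_{\euscr{A}(0)^\vee}(\mathbb{M}_2^{\mathbb{F}_q}) \otimes \{\text{cells}\}$ and $d_1$ realized by multiplication by $h_0$ along the pairing $\overline{\tau}_i \leftrightarrow \overline{\xi}_i$, exactly parallel to the $\textbf{aAHSS}$ computations of \Cref{prop:ExtB01PowersF3}; the $d_1$-differentials cancel the paired towers and leave precisely the $\overline{\xi}_1^{2j}$-towers.

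The main obstacle is the bookkeeping of this decomposition in the presence of the nontrivial Bockstein: I must check that the internal action $Q_0(\tau) = \rho$ together with the quadratic relations $\overline{\tau}_i^2 = \tau\overline{\xi}_{i+1} + \rho\overline{\tau}_{i+1} + \rho\overline{\tau}_0\overline{\xi}_{i+1}$ does not perturb the clean Koszul pairing, so that the only persistent Margolis classes are the powers of $\overline{\xi}_1^2$ and each carries the full base-module structure \eqref{eq:ExtA0-F3} rather than a twisted variant. This is precisely where the analysis departs from the $q \equiv 1\,(4)$ case, and it is where a $d_3$-type Massey-product interaction of the sort appearing in \Cref{lemma:aAHSSDif} could in principle enter; I expect, however, that the weight constraint (which keeps partnered generators together) and the preservation of internal degree under $Q_0$ rule out any such interaction, forcing the spectral sequence to collapse after $d_1$ and yielding the asserted isomorphism.
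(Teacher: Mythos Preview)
Your approach is correct and is in the same spirit as the paper's, though considerably more detailed. The paper's proof is a one-liner: it simply asserts an $\euscr{A}(0)^\vee$-comodule splitting $B_1^{\mathbb{F}_q}(k) \cong \bigoplus_{j=0}^k \Sigma^{4j,2j}\mathbb{M}_2^{\mathbb{F}_q}$ and then reads off Ext from the known formula \eqref{eq:ExtA0-F3}; it also offers an alternative via the $\rho$-Bockstein spectral sequence on the cobar complex. Your argument unpacks the first of these by actually analyzing the $Q_0$-coaction, identifying the Koszul pairs $(\overline{\tau}_i,\overline{\xi}_i)$ for $i\geq 2$ as extended summands and the powers $\overline{\xi}_1^{2j}$ as the surviving trivial summands, and packages this as an aAHSS rather than the paper's $\rho$-Bockstein. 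The added care buys you an honest accounting of the extended summands, which the paper's one-line splitting statement glosses over (these contribute only $h_0$-torsion in filtration~$0$ and are discarded in the mod-$v_1$-torsion context where the lemma is applied). One small remark: your worry about a $d_3$-type Massey interaction as in \Cref{lemma:aAHSSDif} is moot here, since that differential involved $h_1\in\text{Ext}_{\euscr{A}(1)^\vee}$ and there is no $h_1$ over $\euscr{A}(0)^\vee$; the only possible attaching map is the $h_0$-pairing you already handle at $d_1$, so collapse is immediate rather than merely expected.
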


\begin{proof}
    Note that modulo $v_1$-torsion, there is an equivalence of $\euscr{A}(0)^\vee$-comodules 
    \[\frac{B_1^{\mathbb{F}_q}(k)}{v_1\textup{-torsion}} \cong \bigoplus_{j=0}^k\Sigma^{4j, 2j}\mathbb{M}_2^{\mathbb{F}_q}.\]
    The $v_1$-torsion being disregarded is a direct sum of $\euscr{A}(0)^\vee$'s, which contribute a direct sum of $\mathbb{M}_2^{\mathbb{F}_q}$'s in Adams filtration 0, all of which are necessarily $v_1$-torsion. The result now follows from \Cref{eq:ExtA0-F3}. Alternatively, one may filter the cobar complex $C_{\euscr{A}(0)^\vee}(B_1^{\mathbb{F}_q}(k))$ by powers of $\rho$ to yield a $\rho$-Bockstein spectral sequence computing the Ext group in question. Since $\rho^2=0 \in \mathbb{M}_2^{\mathbb{F}_q}$, the only differential is $d_1(\tau) = \rho h_0$, giving the desired structure.
\end{proof}

\begin{thm}
\label{thm:Ext-A1-B0k-F3}
    Let $q \equiv 3 \, (4).$ There is an isomorphism
    \[\frac{\textup{Ext}_{\euscr{A}(1)^\vee}^{*,*,*}(B_0^{\mathbb{F}_q}(k))}{v_1\textup{-torsion}} \cong \Sigma^{4k-4, 2k-2}Z_{\alpha(k)}^{\mathbb{F}_q} \oplus \bigoplus_{j=0}^{k-2}\Sigma^{4j, 2j}\mathbb{F}_2[\tau^2, h_0, \rho\tau]/((\rho\tau)^2),
    \]
    where $\alpha(k)$ is the number of 1's in the dyadic expansion of $k$, and the righthand summand is taken to be empty in the case of $k=1$.
\end{thm}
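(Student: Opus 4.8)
The plan is to run the same inductive scheme used for the trivial-Bockstein case in \Cref{thm:easyExtB0k}, fed by the short exact sequences of \Cref{prop:ses bg}, but with every $\mathbb{C}$-motivic input replaced by its $\mathbb{F}_q$-counterpart; structurally the argument is closest to the $\mathbb{R}$-motivic computation of \cite{Realkqcoop}, since both feature a nonzero $\rho$. Throughout I compute modulo $v_1$-torsion. The base cases $k=0$ and $k=1$ are exactly \Cref{prop:ExtB01PowersF3} (giving $Z_0^{\mathbb{F}_q}$ and $Z_1^{\mathbb{F}_q}$), and I induct on $k$, splitting into the even and odd cases governed by the two sequences of \Cref{prop:ses bg}.

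For even $k = 2m$, I apply $\text{Ext}_{\euscr{A}(1)^\vee}(-)$ to
\[0 \to \Sigma^{2k, k}B_0^{\mathbb{F}_q}(m) \to B_0^{\mathbb{F}_q}(k) \to B_1^{\mathbb{F}_q}(m-1) \otimes (\euscr{A}(1) \modmod \euscr{A}(0))^\vee \to 0\]
and use the change-of-rings isomorphism to rewrite the cokernel term as $\text{Ext}_{\euscr{A}(0)^\vee}(B_1^{\mathbb{F}_q}(m-1))$, which \Cref{lemma:Ext-A0-B1K-OverF3} evaluates as a sum of copies of $\text{Ext}_{\euscr{A}(0)^\vee}(\mathbb{M}_2^{\mathbb{F}_q})$. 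As in the trivial case, I argue that the connecting homomorphism vanishes modulo $v_1$-torsion, so the Ext of $B_0^{\mathbb{F}_q}(k)$ is the sum of the kernel and cokernel contributions; the kernel is handled by the induction hypothesis after the shift $\Sigma^{2k,k}$, and the identity $\alpha(2m)=\alpha(m)$ lines up the $Z$-module indices. For odd $k = 2m+1$ I use the second sequence of \Cref{prop:ses bg}; the cokernel is treated identically, while the kernel $\Sigma^{2(k-1),k-1}B_0^{\mathbb{F}_q}(m) \otimes B_0^{\mathbb{F}_q}(1)$ is computed by a further algebraic Atiyah--Hirzebruch spectral sequence filtering the $B_0^{\mathbb{F}_q}(1)$-factor, taking $\text{Ext}_{\euscr{A}(1)^\vee}(B_0^{\mathbb{F}_q}(m))$ (known by induction) as input. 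Assembling, and using $\alpha(2m+1)=\alpha(m)+1$, should produce $\Sigma^{4k-4,2k-2}Z_{\alpha(k)}^{\mathbb{F}_q}$ together with the stated $\bigoplus_{j=0}^{k-2}\Sigma^{4j,2j}\mathbb{F}_2[\tau^2, h_0, \rho\tau]/((\rho\tau)^2)$; a direct count of copies (the cokernel supplies the low-index summands, the kernel the high-index ones) matches the $k-1$ summands demanded by the theorem.

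The main obstacle is the nontrivial Bockstein, which enters in two places absent from the $q\equiv 1$ argument. First, the odd-case aAHSS now carries the genuine $d_3$-differential $d_3(\rho[3]) = (\tau h_1)[0]$ of \Cref{lemma:aAHSSDif}, extended $\text{Ext}_{\euscr{A}(1)^\vee}(\mathbb{M}_2^{\mathbb{F}_q})$-linearly; propagating this differential correctly through the induction (and checking there is no further $d_r$) is the crux, since it is exactly what forces $\tau h_1$ and $\rho\tau$ classes to appear rather than the split $u$-multiples of the trivial case. Second, $\text{Ext}_{\euscr{A}(0)^\vee}(\mathbb{M}_2^{\mathbb{F}_q})$ is strictly larger than the summand $\mathbb{F}_2[\tau^2, h_0, \rho\tau]/((\rho\tau)^2)$ appearing in the statement --- it carries an additional $\rho$-tower $\rho\,\mathbb{F}_2[\tau^2]$ --- so I must verify that these extra $\rho$-towers become $v_1$-torsion once included into $\text{Ext}_{\euscr{A}(1)^\vee}(B_0^{\mathbb{F}_q}(k))$ and therefore drop out of the stated answer, while the $h_0$- and $\rho\tau$-towers survive. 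Confirming this torsion behavior, together with the absence of hidden extensions linking the cokernel $\rho\tau$-classes to the Adams-cover pieces of $Z_{\alpha(k)}^{\mathbb{F}_q}$, is the delicate bookkeeping that distinguishes this computation from its trivial-Bockstein analogue.
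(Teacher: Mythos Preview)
Your proposal is correct and follows essentially the same approach as the paper: induction on $k$ via the short exact sequences of \Cref{prop:ses bg}, with the connecting homomorphism vanishing modulo $v_1$-torsion, and the key new observation that the extra $\rho$-classes contributed by $\text{Ext}_{\euscr{A}(0)^\vee}^{***}(B_1^{\mathbb{F}_q}(\cdot))$ are $v_1$-torsion (the paper dispatches this ``for degree reasons''). Your write-up is in fact more detailed than the paper's one-paragraph sketch, correctly isolating the $d_3$-differential of \Cref{lemma:aAHSSDif} and the $\rho$-tower discrepancy as the two places where the nontrivial Bockstein changes the bookkeeping.
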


\begin{proof}
    The same argument as given in \Cref{thm:easyExtB0k} works here as well. To be clear, one uses the short exact sequences of \Cref{prop:ses bg} to induce long exact sequences in Ext groups and induction on $k$, where the case of $k=1$ was performed in \Cref{lemma:kspmASSF3}. The result follows by noticing that, since we are working modulo $v_1$-torsion, the connecting homomorphism is trivial, and the class $\rho$  coming from $\text{Ext}_{\euscr{A}(0)^\vee}^{*,*,*}(B_1^{\mathbb{F}_q}(k))$ is $v_1$-torsion for degree reasons.
\end{proof}

\subsection{The ring of cooperations}
We now assemble our findings to compute $\pi_{*,*}^{\mathbb{F}_q}(\text{kq} \otimes \text{kq})$. First, we describe the $\textup{E}_2$-page of the $\textbf{mASS}^{\mathbb{F}_q}(\text{kq} \otimes \text{kq}).$
\begin{corollary}
    Let $q \equiv 3 \, (4)$. The $\textup{E}_2$-page of the $\textup{\textbf{mASS}}^{\mathbb{F}_q}(\textup{kq} \otimes \textup{kq})$, modulo $v_1$-torsion, is given by
    \begin{align*}
    &\bigoplus_{k \geq 0}\textup{Ext}_{\euscr{A}(1)^\vee}^{*,*,*}(\Sigma^{4k, 2k}B_0^{\mathbb{F}_q}(k)) \\ &\cong \bigoplus_{k \geq 0}\Sigma^{4k, 2k}\left(\Sigma^{4k-4, 2k-2}Z_{\alpha(k)}^{\mathbb{F}_q} \oplus \bigoplus_{j=0}^{k-2}\Sigma^{4j, 2j}\mathbb{F}_2[\tau^2, h_0, \rho\tau]/((\rho\tau)^2)\right).
    \end{align*}
\end{corollary}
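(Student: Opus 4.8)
The plan is to assemble the two inputs already in hand, in exact parallel with the proof of the corresponding statement for $q \equiv 1 \, (4)$. The first isomorphism is immediate from \Cref{thm:e2 mass kq coop}, which identifies the $E_2$-page of the $\textbf{mASS}^{\mathbb{F}_q}(\text{kq} \otimes \text{kq})$ with
\[E_2^{s,f,w} = \bigoplus_{k \geq 0}\text{Ext}^{s,f,w}_{\euscr{A}(1)^\vee}(\Sigma^{4k,2k}B_0^{\mathbb{F}_q}(k)).\]
This identification holds for any finite field of odd characteristic and does not see the Bockstein action, since it is a formal consequence of the collapse of the K\"unneth spectral sequence, the change-of-rings isomorphism arising from $\text{H}_{**}(\text{kq}) \cong (\euscr{A} \modmod \euscr{A}(1))^\vee$, and the Brown--Gitler decomposition of \Cref{kq brown gitler homology}.

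For the second isomorphism, I would substitute the computation of \Cref{thm:Ext-A1-B0k-F3} into each summand. Since $\text{Ext}^{***}_{\euscr{A}(1)^\vee}(-)$ commutes with the internal bigraded suspension, we have
\[\text{Ext}^{***}_{\euscr{A}(1)^\vee}(\Sigma^{4k,2k}B_0^{\mathbb{F}_q}(k)) \cong \Sigma^{4k,2k}\,\text{Ext}^{***}_{\euscr{A}(1)^\vee}(B_0^{\mathbb{F}_q}(k)),\]
and \Cref{thm:Ext-A1-B0k-F3} then identifies the right-hand side, modulo $v_1$-torsion, with
\[\Sigma^{4k,2k}\left(\Sigma^{4k-4,2k-2}Z^{\mathbb{F}_q}_{\alpha(k)} \oplus \bigoplus_{j=0}^{k-2}\Sigma^{4j,2j}\mathbb{F}_2[\tau^2, h_0, \rho\tau]/((\rho\tau)^2)\right).\]
Taking the direct sum over $k \geq 0$ yields the claimed description.

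The only point requiring a word of care is that we work modulo $v_1$-torsion throughout. Because the decomposition of \Cref{thm:e2 mass kq coop} is an isomorphism of $\text{Ext}_{\euscr{A}(1)^\vee}^{***}(\mathbb{M}_2^{\mathbb{F}_q})$-modules, the operator $v_1$ acts diagonally on the summands, so the $v_1$-torsion submodule is the direct sum of the $v_1$-torsion submodules; passing to the quotient therefore commutes with the direct sum, and one may apply \Cref{thm:Ext-A1-B0k-F3} summand by summand. In this sense there is no genuine obstacle: all the computational content has already been absorbed into \Cref{thm:Ext-A1-B0k-F3}, whose proof in turn rests on the short exact sequences of \Cref{prop:ses bg} together with the base cases \Cref{prop:ExtB01PowersF3} and \Cref{lemma:kspmASSF3}. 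The present statement is simply the reassembly of those pieces, with the $q \equiv 3 \, (4)$ flavor entering only through the appearance of $\rho\tau$ and $\tau^2$ in place of $u$ and $\tau$.
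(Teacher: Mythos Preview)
Your proof is correct and follows exactly the same approach as the paper: invoke \Cref{thm:e2 mass kq coop} for the first isomorphism and \Cref{thm:Ext-A1-B0k-F3} for the second. The additional remarks you make about suspension and the diagonal action of $v_1$ are fine elaborations, but the paper's own proof is simply these two citations.
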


\begin{proof}
    The first equivalence follows from \Cref{thm:e2 mass kq coop}, and the second follows from \Cref{thm:Ext-A1-B0k-F3}.
\end{proof}

\begin{thm}
\label{thm:kqsmashkqdifsF3}
    The differentials in the $\textup{\textbf{mASS}}^{\mathbb{F}_q}(\textup{kq} \otimes \textup{kq})$ are determined by the differentials in the $\textup{\textbf{mASS}}^{\mathbb{F}_q}(\textup{kq})$.
\end{thm}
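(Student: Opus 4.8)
The plan is to mirror the proof of \Cref{thm:kqsmashkqdifsF5}, using the kq-module structure on $\text{kq} \otimes \text{kq}$ together with the fact that Adams differentials preserve motivic weight. First I would invoke the unit-induced inclusion $\text{kq} \to \text{kq} \otimes \text{kq}$, which is realized on $E_2$-pages as the inclusion of the $k=0$ summand $\text{Ext}_{\euscr{A}(1)^\vee}^{***}(\mathbb{M}_2^{\mathbb{F}_q})$. This pushes the differentials of \Cref{prop:DifsMASSkqHard} forward into the $\textbf{mASS}^{\mathbb{F}_q}(\text{kq} \otimes \text{kq})$ and accounts for the $k=0$ piece. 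The remaining differentials should then be forced by the $\text{Ext}_{\euscr{A}(1)^\vee}^{***}(\text{kq})$-module structure recorded in the corollary above, so the strategy reduces to examining each type of summand appearing in the decomposition of the $E_2$-page.

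Next, for each $k$, every $Z_{\alpha(k)}^{\mathbb{F}_q}$ contains a distinguished submodule isomorphic to a truncated $\text{Ext}_{\euscr{A}(1)^\vee}^{***}(\text{kq}^{\langle i \rangle})$ or $\text{Ext}_{\euscr{A}(1)^\vee}^{***}(\text{ksp}^{\langle i-1 \rangle})$; let $x$ denote its generator. I would argue that $d_r(x) = 0$ for all $r$ by the same weight argument as in the trivial case: the classes in the adjacent stem all have strictly smaller motivic weight, so since Adams differentials preserve weight, $x$ must be a permanent cycle. The Liebniz rule then propagates the kq- and ksp-differentials across these submodules, exactly as in \Cref{prop:DifsMASSkqHard} and \Cref{lemma:kspmASSF3}. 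For the summands of the form $\text{Ext}_{\euscr{A}(0)^\vee}^{***}(\mathbb{M}_2^{\mathbb{F}_q})$ and the $\mathbb{F}_2[\tau^2, h_0, \rho\tau]/((\rho\tau)^2)$ pieces, I would likewise identify the generator $y$, check $d_r(y) = 0$ by weight, and conclude via Liebniz that the differentials on these pieces are precisely those of the $\textbf{mASS}^{\mathbb{F}_q}(\text{H}\mathbb{Z})$ from \Cref{lemma:f3HZdifs}, which are themselves the origin of the kq-differentials. Finally, the isolated $\mathbb{F}_2[\tau^2]$ and $\mathbb{F}_2[\tau^2, h_1]/(h_1^2)$ summands carry no differentials, for the same reason that $h_1$ and $\tau h_1$ support none in the $\textbf{mASS}^{\mathbb{F}_q}(\text{kq})$.

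The main obstacle will be the more delicate weight bookkeeping caused by the nontrivial Bockstein. Because $\tau$ vanishes on the $E_2$-page and its role is played by $\tau^2$, $\tau h_1$, and $\rho\tau$, the coweight-$1$ classes (notably $\rho\tau$ and $\tau h_1$) must be tracked carefully to confirm that no unexpected differential can land on a generator or escape the pattern dictated by the module structure. I would organize this verification by coweight modulo $2$, exactly as the charts in this section are displayed, to ensure that in each adjacent stem the only available targets are $\rho\tau^n h_0^m$-type classes of the correct weight. This leaves the generators as permanent cycles and forces the remaining differentials to be governed entirely by the $\textbf{mASS}^{\mathbb{F}_q}(\text{kq})$, completing the argument.
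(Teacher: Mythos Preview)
Your proposal is correct and follows essentially the same approach as the paper: the paper's proof simply states that the argument of \Cref{thm:kqsmashkqdifsF5} applies verbatim, with the Adams-cover submodules handled via the $\text{Ext}_{\euscr{A}(1)^\vee}^{***}(\mathbb{M}_2^{\mathbb{F}_q})$-module structure and the $\mathbb{F}_2[\tau^2, h_0, \rho\tau]/((\rho\tau)^2)$ summands handled by the $\textbf{mASS}^{\mathbb{F}_q}(\text{H}\mathbb{Z})$ differentials as in \Cref{lemma:kspmASSF3}. Your summand-by-summand weight analysis and coweight bookkeeping are in fact more explicit than what the paper records.
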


\begin{proof}
    The same proof as was given in \Cref{thm:kqsmashkqdifsF5} works here. On each summand of the $\textup{E}_2$-page, there is a submodule given by an Adams cover of kq or ksp. The differentials on this submodule are determined by the module structure over $\text{Ext}_{\euscr{A}(1)^\vee}^{*,*,*}(\mathbb{M}_2^{\mathbb{F}_q})$. On each of the $\mathbb{F}_2[\tau^2, h_0, \rho\tau]/((\rho\tau)^2)$, the differentials are determined by the formulas for the differentials in the $\textbf{mASS}^{\mathbb{F}_q}(\text{H}\mathbb{Z})$ in the same way we determined the differentials on the $\text{Ext}_{\euscr{A}(0)^\vee}^{*,*,*}(\mathbb{M}_2^{\mathbb{F}_q})$-summand of the $\textbf{mASS}^{\mathbb{F}_q}(\text{ksp}).$
\end{proof}

\Cref{fig:f3_kqsmashkq} depicts a chart for the $\textbf{mASS}^{\mathbb{F}_q}(\text{kq} \otimes \text{kq})$, which may be interpreted as either the $\mathrm{E}_2$ or $\mathrm{E}_\infty$-page. For the $\mathrm{E}_2$-page, let a $\blacksquare$ represent $\mathbb{F}_2[\rho,\tau^2, \rho\tau, h_0]/(\rho^2, \rho(\rho\tau), (\rho\tau)^2, \rho h_0)$, let a $\bullet$ represent $\mathbb{F}_2[\rho,\tau^2]/(\rho^2)$ let a $\circ$ represent $\mathbb{F}_2[\rho]/(\rho^2)$, and let a $\blacklozenge$ represents $\rho\tau \mathbb{F}_2[\rho, \tau^2]/(\rho^2).$ Note that an $h_0$-tower supports $\rho\tau$-multiplication even though $\rho h_0 =0$.
Be aware that to be concise in our notation, we have suppressed some multiplicative data. For the $\mathrm{E}_\infty$-page, as in \Cref{fig:kqsmashkqChartsF5}, one simply replaces any $\blacksquare$ with a copy of $\pi_{*,*}^{\mathbb{F}_q}(\mathrm{H}\mathbb{Z})$.

\begin{figure}
    \centering
    \includegraphics[scale=.75]{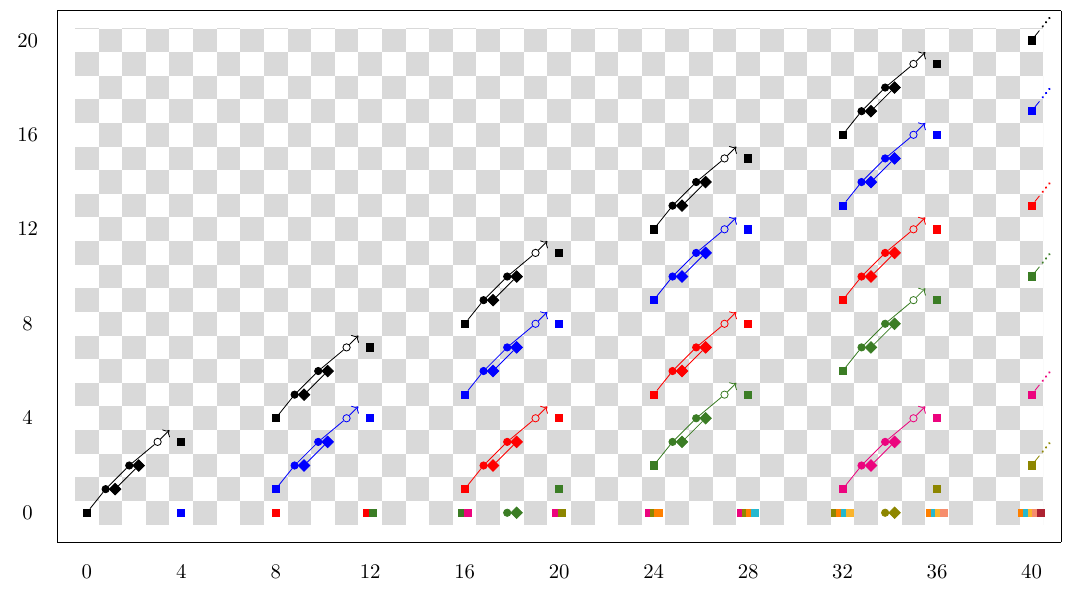}
    \caption{The $\textbf{mASS}^{\mathbb{F}_q}(\text{kq} \otimes \text{kq})$ for $q \equiv 3 \, (4)$.}
    \label{fig:f3_kqsmashkq}
\end{figure}

\begin{remark}
    We expect that the formula given for $\text{Ext}_{\euscr{A}(1)^\vee}^{*,*,*}(B_0^{\mathbb{F}_q}(k))$ generalizes well, since the short exact sequence of Brown--Gitler comodules \Cref{prop:ses bg} holds over any field with 2 invertible. We also expect that one can produce a spectrum level splitting of $\text{kq} \otimes \text{kq}$. Such a splitting would need to incorporate a mix of $h_1$-truncations of Adams covers of kq and ksp, and would need to account for the lack of non-nilpotent $h_1$-towers in low stem degrees. 
\end{remark}

\begin{remark}
    In future work, and indeed one of our initial reasons for investigating the ring of cooperations over $\mathbb{F}_q$, we will use the computation of $\pi_{*,*}^{F}(\text{kq} \otimes \text{kq})$ for $F = \mathbb{C}, \mathbb{R}, \mathbb{F}_3$ to describe $\pi_{*,*}^{\mathbb{Z}[1/2]}(\text{kq} \otimes \text{kq})$ by the methods developed in \cite{BO22}.
\end{remark}

\section{Application to the kq-resolution}
\label{section:kqres}
We conclude by examining the $\textup{E}_1$-page of the kq-resolution. In this section, we do not distinguish between the cases of a trivial or nontrivial Bockstein action on $\mathbb{F}_q$.

Recall that the kq-resolution for the sphere has signature
\[\textup{E}^{s,f,w}_1 = \pi_{s+f, w}^{\mathbb{F}_q}(\text{kq} \otimes \overline{\text{kq}}^{\otimes f}) \implies \pi_{s,w}^{\mathbb{F}_q}(\mathbb{S}).\]
We can determine each filtration $f=n$-line of the $\textup{E}_1$-page by an extension of the techniques used for the computation of $\pi_{*,*}^{\mathbb{F}_q}(\text{kq} \otimes \text{kq}).$ For each $n\geq 0$, there is a motivic Adams spectral sequence of the form
\[\textup{E}^{s,f,w}_2 = \text{Ext}^{s,f,w}_{\euscr{A}^\vee}(\text{H}_{*,*}(\text{kq} \otimes \overline{\text{kq}}^{\otimes n})) \implies \pi_{s,w}^{\mathbb{F}_q}(\text{kq} \otimes \overline{\text{kq}}^{\otimes n}).\]
We begin by decomposing the $\textup{E}_2$-page.

\begin{lemma}
\label{lemma:KunnethForkqRes}
    There is a K\"unneth isomorphism
    \[\textup{H}_{*,*}(\textup{kq} \otimes \overline{\textup{kq}}^{\otimes n}) \cong \textup{H}_{*,*}(\textup{kq}) \otimes \textup{H}_{*,*}(\overline{\textup{kq}})^{\otimes n}.\]
\end{lemma}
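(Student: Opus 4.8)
The plan is to establish the Künneth isomorphism by reducing to the case already handled in the paper and then iterating. Recall that earlier in the excerpt (following Theorem~\ref{thm:e2 mass kq coop}) it was observed that the Künneth spectral sequence
\[
E_2 = \operatorname{Tor}^{\mathbb{M}_2^{\mathbb{F}_q}}\!\left(\textup{H}_{**}(\textup{kq}), \textup{H}_{**}(\textup{kq})\right) \implies \textup{H}_{**}(\textup{kq} \otimes \textup{kq})
\]
collapses on its $E_2$-page, giving $\textup{H}_{**}(\textup{kq} \otimes \textup{kq}) \cong \textup{H}_{**}(\textup{kq}) \otimes \textup{H}_{**}(\textup{kq})$. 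The underlying reason, cited from \cite[Proposition 2.12]{Realkqcoop}, is that $\textup{H}_{**}(\textup{kq}) \cong (\euscr{A} \modmod \euscr{A}(1))^\vee$ is a free (hence flat) module over $\mathbb{M}_2^{\mathbb{F}_q}$, so all higher $\operatorname{Tor}$ groups vanish.

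First I would record that the relevant homologies are all flat over $\mathbb{M}_2^{\mathbb{F}_q}$. From the cofiber sequence $\mathbb{S} \to \textup{kq} \to \overline{\textup{kq}}$ defining $\overline{\textup{kq}}$, the long exact sequence in mod-2 motivic homology shows that $\textup{H}_{**}(\overline{\textup{kq}})$ is the cokernel of the unit-induced map $\mathbb{M}_2^{\mathbb{F}_q} \to \textup{H}_{**}(\textup{kq})$. Since this unit map is a split inclusion of $\mathbb{M}_2^{\mathbb{F}_q}$-modules (the bottom cell of $(\euscr{A}\modmod\euscr{A}(1))^\vee$ splits off as a free summand), $\textup{H}_{**}(\overline{\textup{kq}})$ is itself free over $\mathbb{M}_2^{\mathbb{F}_q}$, spanned by the positive-degree monomials in $(\euscr{A}\modmod\euscr{A}(1))^\vee$. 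In particular every tensor factor appearing in the claim is flat.

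Next I would induct on $n$. For the inductive step, apply the Künneth spectral sequence for the smash product $\bigl(\textup{kq} \otimes \overline{\textup{kq}}^{\otimes (n-1)}\bigr) \otimes \overline{\textup{kq}}$:
\[
E_2 = \operatorname{Tor}^{\mathbb{M}_2^{\mathbb{F}_q}}\!\left(\textup{H}_{**}(\textup{kq} \otimes \overline{\textup{kq}}^{\otimes(n-1)}),\, \textup{H}_{**}(\overline{\textup{kq}})\right) \implies \textup{H}_{**}(\textup{kq} \otimes \overline{\textup{kq}}^{\otimes n}).
\]
Because $\textup{H}_{**}(\overline{\textup{kq}})$ is flat over $\mathbb{M}_2^{\mathbb{F}_q}$, the higher $\operatorname{Tor}$ terms vanish and this spectral sequence collapses onto the $E_2 = E_\infty = \operatorname{Tor}_0$ line, yielding
\[
\textup{H}_{**}(\textup{kq} \otimes \overline{\textup{kq}}^{\otimes n}) \cong \textup{H}_{**}(\textup{kq} \otimes \overline{\textup{kq}}^{\otimes(n-1)}) \otimes \textup{H}_{**}(\overline{\textup{kq}}).
\]
Feeding in the inductive hypothesis $\textup{H}_{**}(\textup{kq} \otimes \overline{\textup{kq}}^{\otimes(n-1)}) \cong \textup{H}_{**}(\textup{kq}) \otimes \textup{H}_{**}(\overline{\textup{kq}})^{\otimes(n-1)}$ gives the result, with the base case $n=0$ being trivial.

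**The main obstacle** I anticipate is purely bookkeeping rather than conceptual: one must confirm that $\textup{H}_{**}(\overline{\textup{kq}})$ is genuinely flat (not merely that $\textup{H}_{**}(\textup{kq})$ is), which hinges on the splitting of the unit map off $(\euscr{A}\modmod\euscr{A}(1))^\vee$ at the level of $\mathbb{M}_2^{\mathbb{F}_q}$-modules. This is immediate from the Brown--Gitler decomposition of \Cref{kq brown gitler homology}, since the $k=0$ summand $B_0^{\mathbb{F}_q}(0) \cong \mathbb{M}_2^{\mathbb{F}_q}$ is exactly the image of the unit and splits off as a direct summand, leaving a free complement. Given this, every collapse is justified by flatness exactly as in the $n=2$ case already established in the paper, so the proof is a routine induction.
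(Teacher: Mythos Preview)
Your proposal is correct and follows essentially the same route as the paper: induct on $n$, invoke the K\"unneth spectral sequence at each step, and collapse it by observing that $\textup{H}_{**}(\textup{kq})$ and $\textup{H}_{**}(\overline{\textup{kq}})$ are free over $\mathbb{M}_2^{\mathbb{F}_q}$ (the latter via the cofiber sequence $\mathbb{S} \to \textup{kq} \to \overline{\textup{kq}}$). The only cosmetic difference is that the paper starts the induction at $n=1$ rather than $n=0$, and justifies freeness of $\textup{H}_{**}(\overline{\textup{kq}})$ directly from the long exact sequence rather than via the Brown--Gitler splitting, but these are equivalent observations.
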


\begin{proof}
    We induct on $n$. The K\"unneth spectral sequence takes the form
    \[\textup{E}_2 = \text{Tor}^{\mathbb{M}_2^{\mathbb{F}_q}}(\textup{H}_{*,*}(\textup{kq}), \textup{H}_{*,*}(\overline{\textup{kq}})) \implies \textup{H}_{*,*}(\textup{kq} \otimes \overline{\textup{kq}}). \] 
    Since $\text{H}_{*,*}(\text{kq}) = (\euscr{A} \modmod \euscr{A}(1))^\vee$ is the $\mathbb{M}_2^{\mathbb{F}_q}$-linear dual of a finitely-generated free $\mathbb{M}_2^{\mathbb{F}_q}$-module, it is also free. This implies that the spectral sequence collapes. 
    
    Now, suppose the result is true for all $i <n$. We again have a  K\"unneth spectral sequence which takes the form
    \[\textup{E}_2 = \text{Tor}^{\mathbb{M}_2^{\mathbb{F}_q}}(\textup{H}_{*,*}(\textup{kq} \otimes \overline{\text{kq}}^{\otimes n-1}), \textup{H}_{*,*}(\overline{\textup{kq}})) \implies \textup{H}_{*,*}(\textup{kq} \otimes \overline{\textup{kq}}^{\otimes n}). \]
    By induction, we have a K\"unneth isomorphism on the left hand factor. Now, note that $\text{H}_{*,*}(\overline{\text{kq}})$ is also free over $\mathbb{M}_2^{\mathbb{F}_q}$, which one can see by the long exact sequence in homology associated to the cofiber sequence
    \[\mathbb{S} \to \text{kq} \to \overline{\text{kq}}.\]
    Thus the higher Tor terms vanish, implying that the spectral sequence collapses. This gives an isomorphism
    \[\text{H}_{*,*}(\text{kq} \otimes \overline{\text{kq}}^{\otimes {n-1}}) \otimes \text{H}_{*,*}(\overline{\text{kq}}) \cong \text{H}_{*,*}(\text{kq} \otimes \overline{\text{kq}}^{\otimes n}).\]
    By induction, we have a K\"unneth isomorphism on the left hand factor, finishing the proof.
\end{proof}

\begin{proposition}
\label{prop:n-lineE2}
    The $\textup{E}_2$-page of the $\textup{\textbf{mASS}}^{\mathbb{F}_q}(\textup{kq} \otimes \overline{\textup{kq}}^{\otimes n})$ may be rewritten as
    \[\textup{E}_2^{s,f,w}\cong \bigoplus_{K \in \euscr{K}_n}\Sigma^{4|K|, 2|K|}\textup{Ext}^{s,f,w}_{\euscr{A}(1)^\vee}(B_0^{\mathbb{F}_q}(K)),\]
    where $\euscr{K}_n = \{K = (k_1, \dots, k_n): k_j \geq 1 \textup{ for all } j\}$, $|K| = \sum_{j=1}^nk_j$, and $B_0^{\mathbb{F}_q}(K) = \bigotimes_{j=1}^nB_0^{\mathbb{F}_q}(k_j).$        
\end{proposition}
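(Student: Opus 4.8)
The plan is to mirror the reasoning used to establish \Cref{thm:e2 mass kq coop} in the case $n=0$, now carrying the extra $\overline{\text{kq}}$-factors along. Starting from the signature
\[E_2^{s,f,w} = \text{Ext}^{s,f,w}_{\euscr{A}^\vee}(\text{H}_{**}(\text{kq}\otimes\overline{\text{kq}}^{\otimes n})),\]
I would first invoke the K\"unneth isomorphism of \Cref{lemma:KunnethForkqRes} to identify the coefficient comodule, as an $\euscr{A}^\vee$-comodule with the diagonal coaction, with $\text{H}_{**}(\text{kq})\otimes\text{H}_{**}(\overline{\text{kq}})^{\otimes n}$.

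Next, using the comodule algebra isomorphism $\text{H}_{**}(\text{kq})\cong(\euscr{A}\modmod\euscr{A}(1))^\vee$, I would apply the standard untwisting (shear) isomorphism together with the change of rings isomorphism of \cite[Appendix A]{Rav86}. Concretely, for any $\euscr{A}^\vee$-comodule $N$ the diagonal $\euscr{A}^\vee$-comodule $(\euscr{A}\modmod\euscr{A}(1))^\vee\otimes N$ is isomorphic to the comodule coinduced from the $\euscr{A}(1)^\vee$-comodule obtained by restricting $N$ along the quotient $\euscr{A}^\vee\to\euscr{A}(1)^\vee$, whence
\[\text{Ext}^{s,f,w}_{\euscr{A}^\vee}\bigl((\euscr{A}\modmod\euscr{A}(1))^\vee\otimes N\bigr)\cong\text{Ext}^{s,f,w}_{\euscr{A}(1)^\vee}(N).\]
Taking $N=\text{H}_{**}(\overline{\text{kq}})^{\otimes n}$ reduces the computation to $\text{Ext}^{s,f,w}_{\euscr{A}(1)^\vee}(\text{H}_{**}(\overline{\text{kq}})^{\otimes n})$, with the $\euscr{A}(1)^\vee$-comodule structure on each $\text{H}_{**}(\overline{\text{kq}})$-factor coming from restriction.

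It then remains to unpack this coefficient comodule. By \Cref{eq:homologykqBar} each factor splits as $\text{H}_{**}(\overline{\text{kq}})\cong\bigoplus_{k\geq 1}\Sigma^{4k,2k}B_0^{\mathbb{F}_q}(k)$, and distributing the $n$-fold tensor product over these direct sums produces exactly the indexing set $\euscr{K}_n$: a summand $\Sigma^{4|K|,2|K|}B_0^{\mathbb{F}_q}(K)$ for each $K=(k_1,\dots,k_n)$ with every $k_j\geq 1$. Finally I would pass this direct sum through $\text{Ext}_{\euscr{A}(1)^\vee}$ to obtain
\[E_2^{s,f,w}\cong\bigoplus_{K\in\euscr{K}_n}\Sigma^{4|K|,2|K|}\text{Ext}^{s,f,w}_{\euscr{A}(1)^\vee}(B_0^{\mathbb{F}_q}(K)).\]

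The main obstacle is the bookkeeping in the shear/change of rings step with several tensor factors, making sure the diagonal $\euscr{A}^\vee$-coaction on $(\euscr{A}\modmod\euscr{A}(1))^\vee\otimes N$ is correctly identified with the coinduced one so that change of rings applies verbatim. A secondary point requiring care is that $\euscr{K}_n$ is infinite, so one cannot blindly commute Ext with the direct sum; however, $\Sigma^{4|K|,2|K|}B_0^{\mathbb{F}_q}(K)$ is concentrated in stems $\geq 4|K|$ while $\mathbb{M}_2^{\mathbb{F}_q}$ is bounded and the reduced generators of $\euscr{A}(1)^\vee$ raise stem, so in each fixed tridegree the cobar complex of the sum is a finite direct sum of cobar complexes. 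Hence cohomology distributes over the sum, and the connectivity argument that controls the infinite direct sum is routine once the grading is tracked.
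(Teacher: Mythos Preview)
Your proposal is correct and follows essentially the same route as the paper: K\"unneth (\Cref{lemma:KunnethForkqRes}), then change of rings along $\text{H}_{**}(\text{kq})\cong(\euscr{A}\modmod\euscr{A}(1))^\vee$, then the Brown--Gitler decomposition \eqref{eq:homologykqBar} of each $\text{H}_{**}(\overline{\text{kq}})$-factor. The paper peels off the factors one at a time rather than distributing all at once, and does not pause to justify the shear isomorphism or the commutation of Ext with the infinite direct sum, so your added care on those points is welcome but not a departure in method.
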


\begin{proof}
    By \Cref{lemma:KunnethForkqRes} and the change of rings isomorphism, we may rewrite the $\textup{E}_2$-page as
    \[\text{Ext}_{\euscr{A}^\vee}^{*,*,*}(\text{H}_{*,*}(\text{kq}) \otimes \text{H}_{*,*}(\overline{\text{kq}})^{\otimes n} ) \cong \text{Ext}^{*,*,*}_{\euscr{A}(1)^\vee}(\text{H}_{*,*}(\overline{\text{kq}})^{\otimes n}).\]
    There is an isomorphism of $\euscr{A}(1)^\vee$-comodules \cite[Proposition 2.10]{Realkqcoop}
    \[
    \text{H}_{*,*}(\overline{\text{kq}}) \cong \bigoplus_{k \geq 1}\Sigma^{4k, 2k}B_0^{\mathbb{F}_q}(k).
    \]
    This allows us to rewrite the first factor of $\text{H}_{*,*}(\overline{\text{kq}})$, leaving us with
    \[\bigoplus_{k_1 \geq 1}\Sigma^{4k_1, 2k_1}\text{Ext}_{\euscr{A}(1)^\vee}^{*,*,*}(B_0^{\mathbb{F}_q}(k_1) \otimes \text{H}_{*,*}(\text{kq})^{\otimes n-1}).\]
    Rewriting the next factor of $\text{H}_{*,*}(\overline{\text{kq}})$ gives us
    \[\bigoplus_{k_1 \geq 1}\Sigma^{4k_1, 2k_1}\left(\bigoplus_{k_2 \geq 1}\Sigma^{4k_2, 2k_2} \text{Ext}_{\euscr{A}(1)^\vee}^{*,*,*}(B_0^{\mathbb{F}_q}(k_1) \otimes B_0^{\mathbb{F}_q}(k_2) \otimes \text{H}_{*,*}(\overline{\text{kq}})^{\otimes n-2})\right),\]
    which we may rewrite as
    \[\bigoplus_{k_1, k_2 \geq 1}\Sigma^{4(k_1+k_2), 2(k_1+k_2)}\text{Ext}_{\euscr{A}(1)^\vee}^{*,*,*}(B_0^{\mathbb{F}_q}(k_1) \otimes B_0^{\mathbb{F}_q}(k_2) \otimes \text{H}_{*,*}(\overline{\text{kq}})^{\otimes n-2}).\]
    The result follows by extending these techniques and rewriting all factors of $\text{H}_{*,*}(\overline{\text{kq}})$ in terms of Brown--Gitler comodules.
\end{proof}
One may easily alter the charts given in \Cref{fig:kqsmashkqChartsF5} and \Cref{fig:f3_kqsmashkq} to illustrate the $\textbf{mASS}^{\mathbb{F}_q}(\text{kq} \otimes \overline{\text{kq}})$, i.e. the 1-line of the kq-resolution, by omiting the black $\text{Ext}^{*,*,*}_{\euscr{A}(1)^\vee}(\mathbb{M}_2^{\mathbb{F}_q})$ summand originating in stem $s=0$. For higher Adams filtration, one may compute  $\text{Ext}_{\euscr{A}(1)^\vee}^{*,*,*}(B_0^{\mathbb{F}_q}(K))$ by an analysis akin to the one we give in this paper, using a combination of the short exact sequences of Brown--Gitler comodules and algebraic Atiyah--Hirzebruch spectral sequences. In particular, while the Ext group is substantially larger,  we can still decompose it, modulo $v_1$-torsion, into:
\begin{itemize}
    \item shifts of $h_1$-truncations $\text{Ext}_{\euscr{A}(1)^\vee}^{*,*,*}(\text{kq}^{\langle n \rangle})$ and $\text{Ext}_{\euscr{A}(1)^\vee}^{*,*,*}(\text{ksp}^{\langle n \rangle})$,
    \item $\text{Ext}_{\euscr{A}(0)^\vee}^{*,*,*}(\text{H}\mathbb{Z})$-summands, and 
    \item summands of the form $(\text{Ext}_{\euscr{A}(0)^\vee}^{*,*,*}(\text{H}\mathbb{Z}))[h_1]/(h_0, h_1^2)$. 
\end{itemize}
As the 0-line of the kq-resolution is $\pi_{*,*}^{\mathbb{F}_q}(\text{kq})$, we have described the $\textbf{mASS}^{\mathbb{F}_q}(\text{kq} \otimes \overline{\text{kq}}^{\otimes n})$ as a module over the $\textbf{mASS}^{\mathbb{F}_q}(\text{kq})$. 

\begin{thm}
\label{thm:n-lineDifs}
    The differentials in the $\textup{\textbf{mASS}}^{\mathbb{F}_q}(\textup{kq} \otimes \overline{\textup{kq}}^{\otimes n})$ are determined by the differentials in the $\textup{\textbf{mASS}}^{\mathbb{F}_q}(\textup{kq}).$
\end{thm}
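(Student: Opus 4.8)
The plan is to mimic the proofs of \Cref{thm:kqsmashkqdifsF5} and \Cref{thm:kqsmashkqdifsF3}, exploiting the fact that $\text{kq} \otimes \overline{\text{kq}}^{\otimes n}$ is a module over the ring spectrum kq. This module structure makes the $\textbf{mASS}^{\mathbb{F}_q}(\text{kq} \otimes \overline{\text{kq}}^{\otimes n})$ into a module over the $\textbf{mASS}^{\mathbb{F}_q}(\text{kq})$, so that every differential is linear over the algebra $\text{Ext}^{***}_{\euscr{A}(1)^\vee}(\mathbb{M}_2^{\mathbb{F}_q})$ acting on the $E_2$-page of \Cref{prop:n-lineE2}. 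The first step is therefore to refine that $E_2$-page: using the short exact sequences of \Cref{prop:ses bg} together with iterated algebraic Atiyah--Hirzebruch spectral sequences, I would decompose each summand $\text{Ext}^{***}_{\euscr{A}(1)^\vee}(B_0^{\mathbb{F}_q}(K))$, modulo $v_1$-torsion, into the three families of modules listed before the statement: shifts of $h_1$-truncations of the Adams covers $\text{Ext}^{***}_{\euscr{A}(1)^\vee}(\text{kq}^{\langle m \rangle})$ and $\text{Ext}^{***}_{\euscr{A}(1)^\vee}(\text{ksp}^{\langle m \rangle})$, copies of $\text{Ext}^{***}_{\euscr{A}(0)^\vee}(\text{H}\mathbb{Z})$, and copies of $(\text{Ext}^{***}_{\euscr{A}(0)^\vee}(\text{H}\mathbb{Z}))[h_1]/(h_0,h_1^2)$. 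This is the multi-index generalization of \Cref{thm:easyExtB0k} and \Cref{thm:Ext-A1-B0k-F3}.

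With the decomposition in hand, the differential analysis proceeds summand by summand, precisely as in \Cref{thm:kqsmashkqdifsF5}. For a summand generated over the algebra $\text{Ext}^{***}_{\euscr{A}(1)^\vee}(\mathbb{M}_2^{\mathbb{F}_q})$ by a class $x$ lying in an Adams cover of kq or ksp, I would argue that $x$ is a permanent cycle: every class in the stem immediately below $x$ sits in strictly lower motivic weight---these are precisely the classes built from $u,\rho,\rho\tau,\tau,h_0$---and since Adams differentials preserve weight there is no possible target. This is the same reasoning that forces $d_r(1)=0$ in the $\textbf{mASS}^{\mathbb{F}_q}(\text{kq})$ and the $\textbf{mASS}^{\mathbb{F}_q}(\text{ksp})$. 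The Leibniz rule then propagates the differentials of \Cref{prop:EasyHZDifsLiftTokq} (respectively \Cref{prop:DifsMASSkqHard}) across the whole module. The $\text{Ext}^{***}_{\euscr{A}(0)^\vee}(\text{H}\mathbb{Z})$-summands are handled identically, recovering the $\textbf{mASS}^{\mathbb{F}_q}(\text{H}\mathbb{Z})$ differentials on the $\tau$-powers, while the $h_1$-tower summands support no differentials for the same reason that $h_1$ is a permanent cycle.

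The main obstacle is not the differential bookkeeping but this first structural step: producing the clean decomposition of $\text{Ext}^{***}_{\euscr{A}(1)^\vee}(B_0^{\mathbb{F}_q}(K))$ for a general tuple $K$. The sequences of \Cref{prop:ses bg} split only a single Brown--Gitler comodule, so for the tensor products $B_0^{\mathbb{F}_q}(K)=\bigotimes_j B_0^{\mathbb{F}_q}(k_j)$ one must run the algebraic Atiyah--Hirzebruch spectral sequences iteratively, verifying at each stage that (modulo $v_1$-torsion) the connecting homomorphisms vanish and that no unexpected higher differentials intrude. In the nontrivial-Bockstein case $q \equiv 3\,(4)$ this is delicate, since the $\textbf{aAHSS}$ already carries a genuine $d_3$ by \Cref{lemma:aAHSSDif}; one must confirm that, after passing to Adams covers and truncating the isolated $h_1$-towers, the resulting generators still occupy the weight range in which the permanent-cycle argument applies. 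Once this input is secured, the module-theoretic propagation of differentials is routine, and the theorem follows.
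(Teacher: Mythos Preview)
Your proposal is correct and follows essentially the same approach as the paper. The paper's own proof is much terser---it simply invokes the map $\text{kq} \to \text{kq} \otimes \text{kq}^{\otimes n} \to \text{kq} \otimes \overline{\text{kq}}^{\otimes n}$ and the resulting module structure of $\text{Ext}^{***}_{\euscr{A}(1)^\vee}(B_0^{\mathbb{F}_q}(K))$ over $\text{Ext}^{***}_{\euscr{A}(1)^\vee}(\mathbb{M}_2^{\mathbb{F}_q})$, then defers entirely to the arguments of \Cref{thm:kqsmashkqdifsF5} and \Cref{thm:kqsmashkqdifsF3}---but your summand-by-summand differential analysis and your identification of the decomposition of $\text{Ext}^{***}_{\euscr{A}(1)^\vee}(B_0^{\mathbb{F}_q}(K))$ as the key input are exactly what underlies that terse statement.
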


\begin{proof}
    In the same way that we were able to determine differentials in the $\textbf{mASS}^{\mathbb{F}_q}(\text{kq} \otimes \text{kq})$ by using the structure of $\text{Ext}_{\euscr{A}(1)^\vee}^{*,*,*}(B_0^{\mathbb{F}_q}(k))$ as a module over $\text{Ext}_{\euscr{A}(1)^\vee}^{*,*,*}(\mathbb{M}_2^{\mathbb{F}_q})$, the map 
    \[\mathbb{\text{kq}} \to \text{kq} \otimes \text{kq}^{ \otimes n}  \to \text{kq} \otimes \overline{\text{kq}} ^{\otimes n}\]
    allows us to determine the differentials in the $\textbf{mASS}^{\mathbb{F}_q}(\text{kq} \otimes \overline{\text{kq}}^{\otimes n})$ by using the structure of $\text{Ext}_{\euscr{A}(1)^\vee}^{*,*,*}(B_0^{\mathbb{F}_q}(K))$ as a module over $\text{Ext}_{\euscr{A}(1)^\vee}^{*,*,*}(\mathbb{M}_2^{\mathbb{F}_q}).$ The result follows.
\end{proof}

This gives an additive description of the $n$-line $\textup{E}_1$-page of the kq-resolution, modulo $v_1$-torsion, in terms of $h_1$-truncated Adams covers of kq and ksp, summands of $\text{H}\mathbb{Z}$, and stunted $\eta$-towers. Moreover, we have described the module structure of the $n$-line over the 0-line. We expect these observations to be useful in future work on the kq-resolution.

\begin{remark}
    In forthcoming work with Petersen and Tatum, we compute the ring of cooperations for kq and for kgl, the effective algebraic K-theory spectrum, over the $p$-adics and rationals \cite{MorPetTat}.
\end{remark}

\begin{remark}
An interesting phenomenon encountered in this paper is that any differential in a motivic Adams spectral sequence we have considered has been determined by a differential in the $\textbf{mASS}^{\mathbb{F}_q}(\text{H}\mathbb{Z})$. Similar observations were made by the author in joint work with Petersen and Tatum for the $\textbf{mASS}^{{\mathbb{F}_q}}(\text{BPGL} \langle 1 \rangle)$ and the $\textbf{mASS}^{\mathbb{F}_q}(\text{BPGL}\langle 1 \rangle \otimes \text{BPGL} \langle 1 \rangle)$ \cite{MorPetTat-BPGL1}, by Ormsby for the $\textbf{mASS}^{\mathbb{Q}_p}(\text{BPGL}\langle n \rangle)$ \cite{Ormsby11}, and by Ormsby and \O stv\ae r for the $\textbf{mASS}^{\mathbb{Q}}(\text{BPGL} \langle n \rangle)$ \cite{Ormsby-Ostvaer-motivicbp}.

Say that a 2-complete motivic spectrum E is fp if its homology $\text{H}_{*,*}(\text{E})$ is finitely-presented as an $\euscr{A}^\vee$-comodule. This is a motivic analogue of the notion of fp-spectra introduced by Mahowald--Rezk \cite{MahRez99}. Examples of motivic fp-spectra include $\text{H}\mathbb{F}_2, \text{H}\mathbb{Z}, \text{BPGL} \langle n \rangle, \text{kq},$ and, at least over $\mathbb{C}$, the connective motivic modular forms spectrum mmf \cite{GIKR22}.

\begin{question}
\label{question}
    Let $\textup{E} \in \textup{SH}(F)$ be an fp-spectrum, where $F$ is any field. To what extent are the differentials in the $\textup{\textbf{mASS}}^{F}(\textup{E})$ determined by the differentials in the $\textup{\textbf{mASS}}^F(\textup{H}\mathbb{Z})$?
\end{question}

A similar question was studied by Ormsby and \O stv\ae r \cite{Ormsby-Ostvaer-lowdim}, where they determined that over fields of low cohomological dimension, there is an isomorphism
\[\pi_{*,*}^F(\text{BPGL} \langle n \rangle) \cong (\pi_{*,*}^F(\text{H}\mathbb{Z}))[v_1 ,\dots, v_n].\]
In particular, this implies that the differentials in the $\textbf{mASS}^F(\text{BPGL} \langle n \rangle)$ are completely determined by $v_n$-linearity and the differentials in the $\textbf{mASS}^{F}(\text{H}\mathbb{Z})$. 

The above example, as well as the case of $\mathrm{kq}$, is particularly nice for comparison as the classical Adams spectral sequence for the classical analogues $\text{BP}\langle n \rangle$ and $\mathrm{bo}$ all collapse at the $\mathrm{E}_2$-page. This is not always the case for an fp-spectrum. For example, the Adams spectral sequence for the connective topological module forms spectrum $\mathrm{tmf}$ does not collapse at the $\mathrm{E}_2$-page \cite{BruRog21_ASStmf}. This implies that there are differentials in the $\textbf{ASS}(\mathrm{tmf})$ not determined by the $\textbf{ASS}(\textrm{H}\mathbb{Z})$. Over $\mathbb{C}$, the analogous statement is also true: there are no differentials in the $\textbf{mASS}^\mathbb{C}(\mathrm{H}\mathbb{Z})$, while the $\textbf{mASS}^{\mathbb{C}}(\mathrm{mmf})$ only collapses at the $\mathrm{E}_5$-page \cite{isaksen_ASS_mmf}. However, were one to construct a connective motivic modular forms spectrum over a field such as $\mathbb{F}_q$, where there are nontrivial differentials in the $\textbf{mASS}^{\mathbb{F}_q}(\mathrm{H}\mathbb{Z})$, one would expect that these differentials would also influence the differentials in the $\textbf{mASS}^{\mathbb{F}_q}(\mathrm{mmf}).$

If an fp-spectrum E is additionally a ring spectrum, then an affirmative answer to \Cref{question} implies that there is a large degree of control over the E-based motivic Adams spectral sequence. As a particular topic of investigation, we give the following.
\begin{conj}
\label{conjecture}
    Over any field $F$ of low cohomological dimension, the $\textup{E}_1$-page of the $\textup{BPGL}\langle 2 \rangle$-motivic Adams spectral sequence can be computed by appropriate lifts of the differentials in the $\textup{\textbf{mASS}}^F(\textup{H}\mathbb{Z})$. In particular, for $F = $ $\mathbb{C}$ or $\mathbb{R}$, the $\textup{\textbf{mASS}}^F(\textup{BPGL} \langle 2 \rangle^{\otimes n})$ collapses on the $\textup{E}_2$-page.
\end{conj}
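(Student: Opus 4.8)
The plan is to transport the entire Brown--Gitler strategy of this paper to $\bpgl{2}$, replacing the sub-Hopf-algebra $\euscr{A}(1)$ by the exterior coalgebra $\euscr{E}(2)$ that governs the truncated Brown--Peterson spectra. Concretely, one has $\text{H}_{**}(\bpgl{2}) \cong (\euscr{A} \modmod \euscr{E}(2))^\vee$, so a change of rings rewrites the $E_2$-page of the $\textbf{mASS}^F(\bpgl{2})$ as $\text{Ext}^{***}_{\euscr{E}(2)^\vee}(\mathbb{M}_2^F) \cong \mathbb{M}_2^F[v_0, v_1, v_2]$ in suitable tridegrees. The K\"unneth isomorphism of \Cref{lemma:KunnethForkqRes} and the Brown--Gitler comodule decomposition of $\text{H}_{**}(\overline{\bpgl{2}})$ developed in \cite{LMPT-BPGL} then split the higher $E_2$-pages $\text{Ext}^{***}_{\euscr{E}(2)^\vee}(\text{H}_{**}(\overline{\bpgl{2}})^{\otimes n})$ into shifts of $\text{Ext}^{***}_{\euscr{E}(2)^\vee}$ of Adams covers of the $\bpgl{j}$ for $j \leq 2$, together with motivic cohomology summands. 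This is the $\euscr{E}(2)$-analogue of \Cref{prop:n-lineE2} and \Cref{thm:Ext-A1-B0k-F3}, and it is the structural input that makes the rest mechanical.

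The next step is to lift differentials, exactly as in \Cref{prop:EasyHZDifsLiftTokq}, \Cref{prop:DifsMASSkqHard}, and \Cref{thm:n-lineDifs}. The tower of quotient maps $\bpgl{2} \to \bpgl{1} \to \bpgl{0} = \text{H}\mathbb{Z}$, obtained from the cofiber sequences killing $v_2$ and $v_1$, induces maps of motivic Adams spectral sequences along which the differentials of the $\textbf{mASS}^F(\text{H}\mathbb{Z})$ (as in \Cref{F5 difs mass HZ} and \Cref{lemma:f3HZdifs}) pull back to $\textbf{mASS}^F(\bpgl{2})$. The inclusion $\bpgl{2} \to \bpgl{2}^{\otimes n}$, together with $v_0, v_1, v_2$-linearity and the Leibniz rule, should then propagate these differentials across every summand of the decomposed $E_2$-page, while the coweight bookkeeping used in \Cref{thm:kqsmashkqdifsF5} rules out any differential originating on a $v_i$-permanent cycle. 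This would give the first assertion: the $E_1$-page of the $\bpgl{2}$-resolution is computed by appropriate lifts of the $\textbf{mASS}^F(\text{H}\mathbb{Z})$-differentials.

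The collapse over $\mathbb{C}$ and $\mathbb{R}$ should then be formal. As recalled in this paper, the $\textbf{mASS}^F(\text{H}\mathbb{Z})$ has no differentials for $F = \mathbb{C}, \mathbb{R}$ (the contrast with $\mathbb{F}_q$ stemming from the extra torsion in \Cref{eq:MotivicCohomology}), so $\tau$ and $\rho$ are permanent cycles. If every differential in $\textbf{mASS}^F(\bpgl{2}^{\otimes n})$ is a lift of an $\text{H}\mathbb{Z}$-differential, then there is nothing to lift and the spectral sequence degenerates at $E_2$. Thus this half of the conjecture reduces to the $E_2$-decomposition above plus the known collapse of $\textbf{mASS}^{\mathbb{C}}(\text{H}\mathbb{Z})$ and $\textbf{mASS}^{\mathbb{R}}(\text{H}\mathbb{Z})$; compare \cite{Ormsby-Ostvaer-lowdim}, whose identification $\pi_{**}^F(\bpgl{n}) \cong (\pi_{**}^F\text{H}\mathbb{Z})[v_1, \dots, v_n]$ already records the $n=0$ case.

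The hard part will be the two features with no counterpart in the $\text{kq}$ computation. First, $\text{Ext}^{***}_{\euscr{E}(2)^\vee}(\mathbb{M}_2^F)$ carries a genuine non-nilpotent $v_2$, so there is vastly more room for exotic differentials, and the coweight arguments that sufficed for the small algebra $\euscr{A}(1)$ must be replaced by something that constrains a three-variable polynomial ring; over an \emph{arbitrary} field of low cohomological dimension one cannot simply read the answer off explicit charts, which is the essential reason the statement is only conjectural. Second, setting up the higher Brown--Gitler short exact sequences for $(\euscr{A} \modmod \euscr{E}(2))^\vee$ and identifying each summand of the resulting Ext groups with an Adams cover requires an inductive algebraic Atiyah--Hirzebruch analysis markedly more intricate than \Cref{thm:easyExtB0k}, since the $\euscr{E}(2)$-comodule structure couples the three Milnor primitives $Q_0, Q_1, Q_2$ rather than the two that control $\euscr{A}(1)$.
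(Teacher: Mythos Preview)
The statement you are addressing is a \emph{conjecture} in the paper, not a theorem; the paper offers no proof. What you have written is therefore not being compared against an existing argument but is a proposed strategy toward a statement the author explicitly leaves open. You seem aware of this, since your final paragraph flags the two places where the argument is incomplete rather than claiming the proof is finished.

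As a strategy, your outline is faithful to the paper's philosophy: replace $\euscr{A}(1)$ by $\euscr{E}(2)$, run the K\"unneth and change-of-rings reductions, decompose via Brown--Gitler comodules, and lift differentials from $\textbf{mASS}^F(\text{H}\mathbb{Z})$ along the tower $\bpgl{2} \to \bpgl{1} \to \text{H}\mathbb{Z}$. The deduction of collapse over $\mathbb{C}$ and $\mathbb{R}$ from the vanishing of $\text{H}\mathbb{Z}$-differentials there is exactly the mechanism the paper has in mind. One small correction: the reference \cite{LMPT-BPGL} treats $\bpgl{1}$, not $\bpgl{2}$, so the Brown--Gitler decomposition of $\text{H}_{**}(\overline{\bpgl{2}})$ you invoke is not already available in the literature and would itself need to be established; you partially acknowledge this in your last paragraph, but earlier you cite it as if it were done.

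The genuine obstruction you identify is the right one. For $\text{kq}$, the paper's differential-lifting arguments (\Cref{prop:EasyHZDifsLiftTokq}, \Cref{thm:kqsmashkqdifsF5}) work because the $E_2$-page over $\euscr{A}(1)$ is small enough that degree and weight considerations force every non-$\tau$-divisible generator to be a permanent cycle. Over $\euscr{E}(2)$ the $E_2$-page is a three-variable polynomial ring and no such sparseness argument is available; ruling out differentials among $v_1, v_2$-divisible classes that are invisible to the $\text{H}\mathbb{Z}$-quotient is precisely the content of the conjecture, and nothing in your outline supplies it. So your proposal is a correct reformulation of what one would need to do, but it does not close the gap, and neither does the paper.
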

\end{remark}


\printbibliography

\end{document}